\newcommand\scalemath[2]{\scalebox{#1}{\mbox{\ensuremath{\displaystyle #2}}}}
\DeclareMathOperator*{\argmax}{arg\,max}
\DeclareFontFamily{OT1}{pzc}{}
\DeclareFontShape{OT1}{pzc}{m}{it}{<-> s * [0.900] pzcmi7t}{}
\DeclareMathAlphabet{\mathpzc}{OT1}{pzc}{m}{it}
\newtheorem{algorithm}{Algorithm}[section] 
\newtheorem{remark}{Remark}[section] 
\newenvironment{REMARK}{\begin{remark}\rm}{\end{remark}} 
\newtheorem{definition1}{Definition}[section] 
\newtheorem{example}{Example}[section] 
\newenvironment{EXAMPLE}{\begin{example}\rm}{\end{example}} 
\DeclareFontFamily{OT1}{pzc}{}
\DeclareFontShape{OT1}{pzc}{m}{it}{<-> s * [0.900] pzcmi7t}{}
\DeclareMathAlphabet{\mathpzc}{OT1}{pzc}{m}{it}
\def\munderbar#1{\underline{\sbox\tw@{$#1$}\dp\tw@\z@\box\tw@}}
\def\headerrule{\rule[.01in]{\textwidth}{.01in}}
\newenvironment{INDENT}{\let\\=\item \parskip .2em
  \list{}{\topsep 0em \partopsep 0em
          \parsep\parskip \itemsep 0em
          \leftmargin 1.8em \rightmargin 0em}\item[]}{\endlist}
\def\IF/{{\bf if}}
\def\THEN/{{\bf then}}
\def\FOR/{{\bf for}}
\def\TO/{{\bf to}}
\def\STEP/{{\bf step}}
\def\DO/{{\bf do}\begin{INDENT}}
\def\ENDFOR/{\end{INDENT}{\bf endfor}}
\def\ENDIF/{\end{INDENT}{\bf endif}}
\def\WHILE/{{\bf while}}
\def\ENDWHILE/{\end{INDENT}{\bf endwhile}}
\def\REPEAT/{{\bf repeat}\begin{INDENT}}
\def\UNTIL/{\end{INDENT}{\bf until}}
\def\BEGIN/{\begin{INDENT}{\bf begin}\begin{INDENT}}
\def\END/{\end{INDENT}{\bf end}\end{INDENT}}
\newcommand{\ben}{\begin{enumerate}}
\newcommand{\een}{\end{enumerate}}
\newcommand{\be}{\begin{equation}}
\newcommand{\ee}{\end{equation}}
\title{Gaussian Elimination with Randomized Complete Pivoting}
\author{Christopher Melgaard\thanks{Department of Mathematics, University of California, Berkeley.}
\and Ming Gu\thanks{Department of Mathematics, University of California, Berkeley. This research was supported in part by NSF Award CCF-1319312.}}
\begin{document}

\DeclareGraphicsExtensions{.pdf,.png,.jpg,.eps}

\maketitle

\begin{abstract} Gaussian elimination with partial pivoting (GEPP) has long been among the most widely used methods for computing the LU factorization of a given matrix. However, this method is also known to fail for matrices that induce large element growth during the factorization process. In this paper, we propose a new scheme, Gaussian elimination with randomized complete pivoting (GERCP) for the efficient and reliable LU factorization of a given matrix. GERCP satisfies GECP (Gaussian elimination with complete pivoting) style element growth bounds with high probability, yet costs only marginally higher than GEPP. Our numerical experimental results strongly suggest that GERCP is as reliable as GECP and as efficient as GEPP for computing the LU factorization. 
\end{abstract}
 
\section{Background and Motivation}
Solving linear systems of equations
\begin{equation}\label{Eqn:eqn}
{\bf A}{\bf x} = {\bf b}, 
\end{equation}
where ${\bf A} \in \mathbb{R}^{n\times n}$ and ${\bf x, b} \in \mathbb{R}^{n}$, is a fundamental problem in numerical linear algebra and scientific computing. Gaussian Elimination with Partial Pivoting (GEPP) solves this problem by computing the LU factorization of ${\bf A}$ and is typically efficient and reliable. Over the years, GEPP has been repeatedly re-designed and re-implemented for better performance, and is the backbone for generations of mathematical software packages, including LINPACK \cite{linpack}, LAPACK \cite{LAPACK}, PLAPACK, SCALAPACK, PLASMA and MAGMA. GEPP routines in today's mathematical software libraries such as the Intel {\tt mkl} are capable of solving linear systems of equations with tens of thousands of variables at or near the peak of the machine's speed. 

Efficiency aside, an equally important consideration is numerical reliability. While algorithms for solving eigenvalue problems have become significantly more stable over the years, GEPP was known to be, and remains, a method that is mostly stable in practice but unstable for many well-known matrices including some from common integral equations and differential equations applications \cite{FOSTER4,wrightfail}. 

Pivoting plays a crucial role in the reliability of Gaussian elimination (GE), which is tied to \emph{element growth} within the LU factorization process. The most naive version of GE, Gaussian elimination without pivoting (GENP), does not perform any pivoting and only requires $\frac{2}{3} n^3 + O(n^2)$ floating point operations with no entry comparisons \cite{demmelbk}. However, this method can suffer from uncontrolled element growth and is only known to be reliable in a few instances like diagonally dominant matrices among others.  The most popular version of GE is GEPP, which limits element growth to at most exponential by swapping the rows of ${\bf A}$ (i.e., partial pivoting) during elimination, and is numerically stable on average. The additional cost, about $\frac{1}{2}n^2$ entry comparisons and the associated data movement, is typically a small fraction of the total GE cost. The most reliable version of GE is Gaussian elimination with complete pivoting (GECP), which swaps both rows and columns for sub-exponential element growth \cite{wilk1} and is universally believed to be always backward stable in practice \cite{demmelbk}. However, GECP is prohibitively slow with $\frac{1}{3}n^3 + O(n^2)$ entry comparisons and relatively little memory reuse \cite{demmelbk, highambk}.

Rook pivoting \cite{foster3,geogauss2,rookpoole} is an attempt to speedup complete pivoting while maintaining the guarantee of sub-exponential element growth. Rook pivoting is part of the {\tt LUSOL} package \cite{LUSOL} for sparse LU factorization. Despite having better performance in the ``average'' case, there are many matrices that still require $O(n^3)$ entry comparisons in the worst case, providing a negligible speedup over complete pivoting \cite{highambk}. 

In this paper, we propose a novel pivoting scheme called \emph{Gaussian elimination with randomized complete pivoting} (GERCP). We show that GERCP satisfies a stability condition similar to that of complete pivoting, suggesting that these methods share similar stability properties. Yet, we also demonstrate that GERCP is comparable to GEPP in computational overhead. Our numerical experimental results strongly suggest that GERCP is a numerically stable and computationally efficient alternative to GEPP.

Randomization has been used to fix the numerical instability of GEPP in the literature, through GE on the product of random matrices and ${\bf A}$ to avoid catastrophically bad pivots. These methods are known to work well in practice in general, but they still lack effective control on element growth, and can be much less accurate than GEPP. 

In Section \ref{Sec:Notation}, we introduce the necessary notation and background for the paper. In Section \ref{Sec:GERCP}, we introduce GERCP and state/prove some important properties. In section $4$, we talk about numerical experiements and implementations of GERCP. The appendix has results needed by the proofs in section $3$.

\section{The Setup and Background}\label{Sec:Notation}
In this paper, we consider Gaussian elimination on an invertible square matrix ${\bf A}  \in \mathbb{R}^{n\times n}$, although our algorithms and analysis carry over to the cases of singular matrices and rectangular matrices with few modifications.

\subsection{Notation}

We will follow the familiar slight abuse of notation from scientific computing and numerical linear algebra, mimicking the way that LAPACK overwrites the input matrix with the $L$ and $U$ factors. The diagonal of ${\bf A}$ becomes the diagonal of $U$ because the diagonal of $L$ is always $1$ and thus does not need to be stored.

\begin{algorithm}\label{Alg:gematlab} {\bf Classical Gaussian Elimination in Matlab Notation} \\
\headerrule

\begin{tabular}{ll}
{\bf Input:} & $n \times n$ matrix ${\bf A}$ \\
{\bf Output:} & lower triangular $L$ with unit diagonal, upper triangular $U$, \\
& row permutation $\Pi_r$, column permutation $\Pi_c$.  \\
\end{tabular}

\noindent \headerrule

\begin{INDENT}
{\noindent \bf set} $A = {\bf A}$ \\
{\noindent \bf for} $k = 1, \cdots, n - 1$ {\bf do}   (i.e. called $k^{th}$ stage of LU) \\
\begin{INDENT}
\begin{enumerate}
\item {\bf select} column pivot (INSERT PIVOTING RULE). \\
 {\bf swap} (UPDATE $A$ AND $\Pi_c$ WITH PIVOT DECISION).
\item {\bf select} row pivot (INSERT PIVOTING RULE). \\
 {\bf swap} (UPDATE $A$ AND $\Pi_r$ WITH PIVOT DECISION).
\item {\bf compute} $A(k+1:n,k) = A(k+1:n,k)/A(k,k)$;
\item {\bf compute} $A(k+1:n,k+1:n) = A(k+1:n,k+1:n) - A(k+1:n,k) * A(k,k+1:n) $;
\end{enumerate}
\end{INDENT}
\end{INDENT}
\headerrule
\end{algorithm}

\begin{REMARK} While the working matrix $A$ has been overwritten in Algorithm \ref{Alg:gematlab}, in our subsequent discussions we will refer $L$ and $U$ as the triangular matrices stored in $A$ and still refer ${\bf A}$ as the original input matrix. We will use $A_k\in \mathbb{R}^{n\times n}$ to refer explicitly to the working matrix before the $k^{th}$ stage of the outer most loop. Thus, $A_{n}$ refers to the working matrix after the algorithm terminates, i.e. {\emph after} the $(n-1)^{th}$ stage of the outer loop.
\end{REMARK}

\begin{REMARK} For easy of discussion, we have written Algorithm \ref{Alg:gematlab} in such a way that, for each $k$,  it performs possible column pivoting before any possible row pivoting. GENP, GEPP, GECP and rook pivoting can all be written in this form. 
\end{REMARK}

For any appropriate dimension $m$, we denote ${\bf e}_i \in \mathbb{R}^{m}$ to be the $i^{th}$ standard basis vector, i.e. a vector with all entries equal to $0$ except for the $i^{th}$ entry which equals $1$; we also denote ${\bf e} \in \mathbb{R}^{m}$ to be the vector with all entries equal to $1$. Any permutation matrix $\Pi \in \mathbb{R}^{n \times n}$ is a square matrix with exactly one entry equal to $1$ in each row and column, and all other entries equal to $0$. We refer to the permutation induced by $\Pi$ as $\pi : \{ 1, \cdots, n \} \rightarrow \{1, \cdots, n \}$ in the sense that $\pi(i) = j$ if and only if $\Pi {\bf e}_i = {\bf e}_j$. We will commonly make use of the swap or \emph{$2$-cycle} permutation given by $\pi_{(i,j)}$ or $\Pi_{(i,j)}$ in matrix form defined by
\[ \pi_{(i,j)} (i) = j, \hspace{1cm} \pi_{(i,j)} (j) = i \hspace{0.5cm} \text{ and } \hspace{0.5cm} \pi_{(i,j)} (k) = k \text{, for all $k \neq i,j$}  \]
 We denote the final row and column permutations of an algorithm as $\Pi_r$ and $\Pi_c$ respectively. At the $k^{th}$ stage of LU, Algorithm \ref{Alg:gematlab} will swap the $k^{th}$ column with the $\alpha_k^{th}$ column and the $k^{th}$ row with the $\beta_k^{th}$ row. As a result, we can write 
\[{\displaystyle 
\Pi_c =\Pi_{(n-1,\alpha_{n-1})} \cdots \Pi_{(2,\alpha_{2})} \Pi_{(1,\alpha_{1})} , \quad 
\Pi_r = \Pi_{(n-1,\beta_{n-1})} \cdots \Pi_{(2,\beta_{2})} \Pi_{(1,\beta_{1})}} \]

as a product of the individual column/row swaps. Furthermore, we define the next notation to give us the first $k-1$ swaps and the last $n-k$ swaps
\begin{align*}
\Pi_{c,k} &= \Pi_{(k-1,\alpha_{k-1})} \cdots \Pi_{(2,\alpha_{2})} \Pi_{(1,\alpha_{1})} \\
\Pi_{c,-k} &= \Pi_{(n-1,\alpha_{n-1})} \cdots \Pi_{(k+1,\alpha_{k+1})} \Pi_{(k,\alpha_{k})}
\end{align*}
Also, we will use the analogous definition for $\Pi_{r,k}$ and $\Pi_{r,-k}$.


In Figure \ref{fig:Matlab}, we describe the use of the MATLAB colon notation in combination with the permutations above to explain our rows/columns reorderings of a matrix and its select submatrices. 

\begin{figure}[t!] \label{fig:Matlab}
	\centering
	\begin{tabular}{ |l|l|l|p{5.3cm}| }
	\hline
	\multicolumn{4}{ |c| }{{\bf \large Common examples of Matlab notation} for $1\leq i\leq p\leq m$ and $1\leq j\leq q\leq n$} \\
	\hline
	{\bf Notation} & {\bf Pivoted Notation} & {\bf Dimensions} & {\bf Description} \\ \hline
	$B(:,:)$ & $B(\pi_1(:),\pi_2(:))$ & $\mathbb{R}^{n \times n}$ & Entire matrix $B$ or $\Pi_1B\Pi_2^T$ resp. \\ \hline
	$B(i,:)$ & $B(\pi_1(i),\pi_2(:))$ & row vector in $\mathbb{R}^{n}$ & $i^{th}$ row of $B$ or $\Pi_1B\Pi_2^T$ resp. \\ \hline
	$B(:,j)$ & $B(\pi_1(:),\pi_2(j))$ & column vector in $\mathbb{R}^{n}$ & $j^{th}$ column of $B$ or $\Pi_1B\Pi_2^T$ resp. \\ \hline
	$B(i,j:q)$ & $B(\pi_1(i),\pi_2(j:q))$& row vector in $\mathbb{R}^{q-j+1}$ & $j^{th}$ through $q^{th}$ entries of $i^{th}$ row of $B$ or $\Pi_1B\Pi_2^T$ resp. \\ \hline
	$B(i:p,j)$ & $B(\pi_1(i:p),\pi_2(j))$ & column vector in $\mathbb{R}^{p-i+1}$ & $i^{th}$ through $p^{th}$ entries of $j^{th}$ column of $B$ or $\Pi_1B\Pi_2^T$ resp.  \\ \hline
	$B(i:p,j:q)$ & $B(\pi_1(i:p),\pi_2(j:q))$ & $\mathbb{R}^{(p-i+1) \times (q-j+1)}$ & Submatrix from intersection $i^{th}$ through $p^{th}$ rows and $j^{th}$ through $q^{th}$ columns of $B$ or $\Pi_1B\Pi_2^T$ resp. \\
\hline
	\end{tabular}
	\caption{Table of Matlab notations}
\end{figure}

Let $\pi_c$ and $\pi_r$ be the permutation of columns and rows performed by LU respectively. We use the following notation to refer to matrices with the final pivoting applied apriori
\begin{align*}
{\bf A}^{\Pi_c} &= \Pi_r A \Pi_c^T = A(\pi_r(:),\pi_c(:)) \\
A_k^{\Pi_c} &= \Pi_{r,-k} A \Pi_{c,-k}^T = A_k(\pi_{r,-k}(:),\pi_{c,-k}(:))
\end{align*} 
We do this because all of the pivoting methods discussed in this paper are \emph{top-heavy} as defined in Definition \ref{topheavy}. The row pivots $\pi_r$ of a top-heavy pivoting strategy are deterministic given the column pivots $\pi_c$ applied to ${\bf A}$ by the LU factorization because each row pivot must satisfy equation \eqref{eqn:topheavy}. Therefore, when writing ${\bf A}^{\Pi_c}$, it is understood that the row pivots are the unique set of top-heavy row pivots. 

Schur complements form a crucial role in Gaussian elimination. We establish notation for Schur complements as $S_k \in \mathbb{R}^{(k:n)\times (k:n)}$. Notice the use of Matlab notation $(k:n)\times (k:n)$ instead of $(n-k+1) \times (n-k+1)$. The Schur complement $S_k$ will act like a normal $(n-k+1) \times (n-k+1)$ matrix for most operations like matrix multiplication, matrix addition and ect. However, when using the Matlab notation in Figure \ref{fig:Matlab} to access entries of $S_k$, we impose the abuse of notation that rows and columns are enumerated from $k$ to $n$, instead of $1$ to $n-k+1$. For example, \begin{enumerate}
\item[$\bullet$] top left entry of $S_k$ is denoted as $S_k(k,k)$, but NOT $S_k(1,1)$. 
\item[$\bullet$] submatrix of last two columns of $S_k$ is denoted by $S_k(:,n-1:n)$, but NOT $S_k(:,n-k:n-k+1)$. \end{enumerate}
This makes our analysis much cleaner and more straightforward because it synchronizes the enumeration of columns/rows between the $k^{th}$ working matrix $A_k$ and the $k^{th}$ Schur complement $S_k$ which is a submatrix for $A_k$, i.e. $S_k(k:n,k:n) = A_k(k:n,k:n)$. Given this notation for Schur complements, we formally define the working Schur complement at the $k^{th}$ stage $S_k$ and the fully pivoted $k^{th}$ Schur complement $S_k^{\Pi_c}$
\begin{align*}
S_k(k:n,k:n) &= A_k(k:n,k:n) \\
S_k^{\Pi_c}(k:n,k:n) &= A_k^{\Pi_c} (k:n,k:n)
\end{align*}
where $A_k\in\mathbb{R}^{n\times n}$ is the working matrix at the $k^{th}$ stage. This implies that $S_k^{\Pi_c}(k:n,k:n) = S_k(\Pi_{r,-k}(k:n),\Pi_{c,-k}(k:n))$, i.e. $S_k$ has the pivots only up to the $k^{th}$ step and $S_k^{\Pi}$ is already pivoted into the final permutation so that no further pivots are required.
  
\begin{figure}[t!]
	\centering
	\begin{tabular}{ |l|l|p{5.6cm}|p{4cm}| }
	\hline
	\multicolumn{4}{ |c| }{\bf \large Different $A, L$ and $U$ notations} \\
	\hline
	{\bf Notation} & {\bf Dimensions} & {\bf Description} & {\bf Algorithm Pivots} \\ \hline
	${\bf A}$, $A_1$, $S_1$ & $\mathbb{R}^{n \times n}$ & Unadulterated input matrix & None \\ \hline
	${\bf A}^{\Pi_c}$,$A_1^{\Pi_c}$, $S_1^{\Pi_c}$ & $\mathbb{R}^{n\times n}$ & Pivoted input matrix & All pivots applied apriori \\ \hline
	$A$ & $\mathbb{R}^{n\times n}$ & Current working matrix. Overwrite triangular $L, U$ factors and Schur complement in place & Pivots applied as determined by algorithm \\ \hline
	$A_k$ & $\mathbb{R}^{n\times n}$ & Working matrix at $k^{th}$ stage. Overwrite triangular $L_k, U_k$ factors and Schur complement $S_k$ in place & Pivots applied as determined by algorithm \\ \hline
	$A_k^{\Pi_c}$ & $\mathbb{R}^{n\times n}$ & Working matrix at $k^{th}$ stage, i.e. $\Pi_{r,-k} A_k \Pi_{c,-k}^T$ & All pivots applied apriori \\ \hline
	$S_k$ & $\mathbb{R}^{(k:n) \times (k:n)}$ & $k^{th}$ Schur Complement of $\Pi_{r,k}{\bf A}\Pi_{c,k}^T$ & Pivots applied as determined by algorithm \\ \hline
	$S_k^{\Pi_c}$ & $\mathbb{R}^{(k:n) \times (k:n)}$ & $k^{th}$ Schur Complement of $\Pi_r{\bf A}\Pi_c^T$ & All pivots applied apriori \\
\hline
	\end{tabular}
	\caption{Table of Notations}
\end{figure}
The unit lower triangular matrix $L_{k+1}\in\mathbb{R}^{n\times k}$ and the upper triangular matrix $U_{k+1}\in\mathbb{R}^{k\times n}$ relate to the working matrices $A_{k+1}$ 
\begin{align*}
U_{k+1} &= \left( \begin{array}{cccccc} A_{k+1}(1,1) & A_{k+1}(1,2) & \cdots & A_{k+1}(1,k) & \cdots & A_{k+1}(1,n) \\  & A_{k+1}(2,2) &\cdots & A_{k+1}(2,k) & \cdots & A_{k+1}(2,n) \\  &  & \ddots & \vdots & \cdots & \vdots \\  &  &  & A_{k+1}(k,k) & \cdots & A_{k+1}(k,n) \end{array} \right) \\
L_{k+1} &= \left( \begin{array}{cccc} 1 &  &  &  \\ A_{k+1}(2,1) & 1 &  &  \\ \vdots & \vdots & \ddots &  \\ A_{k+1}(k,1) & A_{k+1}(k,2) & \cdots & 1 \\ \hspace{0.37cm}A_{k+1}(k\hspace{-0.1cm}+\hspace{-0.1cm}1,1) & \hspace{0.37cm}A_{k+1}(k\hspace{-0.1cm}+\hspace{-0.1cm}1,2) & \cdots & \hspace{0.37cm}A_{k+1}(k\hspace{-0.1cm}+\hspace{-0.1cm}1,k) \\ \vdots & \vdots & \vdots & \vdots  \\ A_{k+1}(n,1) & A_{k+1}(n,2) & \cdots & A_{k+1}(n,k) \end{array} \right) 
\end{align*}
for $1<k\leq n$. Remember that $L=L_{n+1}\in\mathbb{R}^{n\times n}$ and $U=U_{n+1}\in\mathbb{R}^{n\times n}$. 

\subsection{Useful Preliminary Tools}
\subsubsection{Tools from Matrix Analysis} In finite dimensions, it is well known that all normed vector spaces are topologically equivalent. The following lemma gives the equivalence between the $\| \cdot \|_2$ and $\| \cdot \|_{\infty}$ vector norms, which is easily proven from the definitions of the norms. This result can also be seen as a trivial application of the Fritz John Ellipsoid Theorem \cite{john2014extremum} for convex regions that are symmetric about the origin. 
\begin{lemma}[Equivalence of $\ell_2$ and $\ell_{\infty}$ in finite dimensions] \label{inf_to_2}
Let ${\bf x} \in \mathbb{R}^d$. Then
\[ \frac{1}{\sqrt{d}}\left\| {\bf x} \right\|_{2} \leq \left\| {\bf x} \right\|_{\infty} \leq \left\| {\bf x} \right\|_{2} \]
\end{lemma}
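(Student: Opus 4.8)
The plan is to argue directly from the coordinate definitions of the two norms, treating the two inequalities separately. Write $\mathbf{x} = (x_1,\dots,x_d)^T$, so that $\|\mathbf{x}\|_\infty = \max_{1\le i\le d} |x_i|$ and $\|\mathbf{x}\|_2 = \bigl(\sum_{i=1}^d x_i^2\bigr)^{1/2}$. Both bounds will follow from elementary inequalities between a maximum and a sum of nonnegative terms, with no appeal to the Fritz John machinery mentioned in the remark (that is offered only as context).

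For the upper bound $\|\mathbf{x}\|_\infty \le \|\mathbf{x}\|_2$, I would pick an index $i^\ast$ achieving the maximum, so $\|\mathbf{x}\|_\infty^2 = x_{i^\ast}^2 \le \sum_{i=1}^d x_i^2 = \|\mathbf{x}\|_2^2$, since every omitted term $x_i^2$ is nonnegative; taking square roots gives the claim. For the lower bound, I would bound each coordinate by the maximum: $x_i^2 \le \|\mathbf{x}\|_\infty^2$ for every $i$, hence $\|\mathbf{x}\|_2^2 = \sum_{i=1}^d x_i^2 \le d\,\|\mathbf{x}\|_\infty^2$; taking square roots and dividing by $\sqrt{d}$ yields $\frac{1}{\sqrt d}\|\mathbf{x}\|_2 \le \|\mathbf{x}\|_\infty$. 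Chaining the two gives the stated double inequality.

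There is no real obstacle here; the only thing worth a sentence is sharpness, to reassure the reader the constants cannot be improved: the right inequality is tight when $\mathbf{x}$ is a multiple of a standard basis vector $\mathbf{e}_i$ (both sides equal $|x_i|$), and the left inequality is tight when $\mathbf{x}$ is a multiple of the all-ones vector $\mathbf{e}$ (then $\|\mathbf{x}\|_2 = \sqrt d\,\|\mathbf{x}\|_\infty$). I would close by noting this lemma will be used later only to pass between $\ell_2$ bounds, which arise naturally from Gaussian sketching arguments, and the $\ell_\infty$ (entrywise) bounds that control element growth in the LU factorization.
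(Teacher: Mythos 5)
Your proof is correct and is exactly the elementary coordinate-wise argument the paper has in mind (the paper omits the proof, stating only that the lemma "is easily proven from the definitions of the norms"). The bound $x_{i^\ast}^2 \le \sum_i x_i^2$ for the upper inequality and $\sum_i x_i^2 \le d\,\|\mathbf{x}\|_\infty^2$ for the lower one, plus your sharpness examples, fill in precisely what was left to the reader.
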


\begin{definition}[Subordinate matrix norms \cite{highambk,horn}]
Let $1 \leq p,q \leq \infty$ and let ${\bf B}  \in \mathbb{R}^{m \times n}$. A {\bf subordinate matrix norm} is a matrix norm of the form
\[ \left\|{\bf B}\right\|_{q,p} = \sup_{0 \neq {\bf x} \in \mathbb{R}^n} \frac{\left\|{\bf B}{\bf x} \right\|_{p}}{\left\| {\bf x} \right\|_{q}} = \max_{\left\| {\bf x} \right\|_{q} = 1} \left\|{\bf B}{\bf x} \right\|_{p} \]
\end{definition}
Note that for normal matrix operator norms, we have that $\|{\bf B} \|_{p} = \|{\bf B} \|_{p,p}$. We make use of a little known subordinate matrix norm by setting $q=1$ in the above as in exercise 6.11 of \cite{highambk}.
\begin{lemma}[Maximum $\ell_p$ column norm $\left\| \cdot \right\|_{1, p}$]
Let ${\bf B} \in \mathbb{R}^{m\times n}$. Then, we have
\[ \left\|{\bf B}\right\|_{1,p} = \max_{1 \leq i \leq n} \left\|{\bf B}{\bf e}_i \right\|_p = \max_{1 \leq i \leq n} \left\| {\bf B}(:,i) \right\|_p\]
\end{lemma}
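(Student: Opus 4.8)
The plan is to establish the two inequalities that pinch $\left\|\mathbf{B}\right\|_{1,p}$ between $\max_i \left\|\mathbf{B}\mathbf{e}_i\right\|_p$ from above and below, using the variational characterization $\left\|\mathbf{B}\right\|_{1,p} = \max_{\left\|\mathbf{x}\right\|_1 = 1}\left\|\mathbf{B}\mathbf{x}\right\|_p$ supplied by the preceding definition. Since the identity $\left\|\mathbf{B}\mathbf{e}_i\right\|_p = \left\|\mathbf{B}(:,i)\right\|_p$ is immediate from the meaning of the Matlab colon notation, it suffices to prove the first equality.

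First I would prove the upper bound. Let $\mathbf{x} \in \mathbb{R}^n$ with $\left\|\mathbf{x}\right\|_1 = 1$, and expand $\mathbf{x} = \sum_{i=1}^n x_i \mathbf{e}_i$, so that by linearity $\mathbf{B}\mathbf{x} = \sum_{i=1}^n x_i\, \mathbf{B}\mathbf{e}_i$. Applying the triangle inequality for $\left\|\cdot\right\|_p$ and then bounding each column norm by the maximum,
\[
\left\|\mathbf{B}\mathbf{x}\right\|_p \;\le\; \sum_{i=1}^n |x_i|\, \left\|\mathbf{B}\mathbf{e}_i\right\|_p \;\le\; \Bigl(\max_{1\le i\le n}\left\|\mathbf{B}\mathbf{e}_i\right\|_p\Bigr)\sum_{i=1}^n |x_i| \;=\; \max_{1\le i\le n}\left\|\mathbf{B}\mathbf{e}_i\right\|_p .
\]
Taking the supremum over all such $\mathbf{x}$ gives $\left\|\mathbf{B}\right\|_{1,p} \le \max_{1\le i\le n}\left\|\mathbf{B}\mathbf{e}_i\right\|_p$.

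For the reverse inequality I would simply exhibit a maximizer: choose an index $j$ with $\left\|\mathbf{B}\mathbf{e}_j\right\|_p = \max_{1\le i\le n}\left\|\mathbf{B}\mathbf{e}_i\right\|_p$. Since $\left\|\mathbf{e}_j\right\|_1 = 1$, the vector $\mathbf{e}_j$ is feasible in the maximization defining $\left\|\mathbf{B}\right\|_{1,p}$, whence $\left\|\mathbf{B}\right\|_{1,p} \ge \left\|\mathbf{B}\mathbf{e}_j\right\|_p = \max_{1\le i\le n}\left\|\mathbf{B}\mathbf{e}_i\right\|_p$. Combining the two inequalities proves the claim. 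There is no real obstacle here; the only conceptual point worth noting is that this is the familiar principle that a convex function (here $\mathbf{x}\mapsto\left\|\mathbf{B}\mathbf{x}\right\|_p$) attains its maximum over the polytope $\{\left\|\mathbf{x}\right\|_1\le 1\}$ at an extreme point, and the extreme points of the $\ell_1$ ball are exactly $\pm\mathbf{e}_1,\dots,\pm\mathbf{e}_n$, on which $\left\|\mathbf{B}(\pm\mathbf{e}_i)\right\|_p = \left\|\mathbf{B}\mathbf{e}_i\right\|_p$.
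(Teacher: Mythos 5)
Your proof is correct and follows essentially the same route as the paper's: the upper bound via expanding $\mathbf{x}=\sum_i x_i\mathbf{e}_i$ and the triangle inequality, and the lower bound by testing the supremum at a maximizing standard basis vector. The closing remark about extreme points of the $\ell_1$ ball is a nice conceptual aside but not needed for the argument.
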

\begin{proof}
Let $0 \neq {\bf x} = \left[ \begin{array}{c} x_1 \\ \vdots \\ x_n \end{array} \right] \in \mathbb{R}^n$. Then
\begin{align*}
\left\|{\bf B}{\bf x} \right\|_p &= \left\| \sum_{i=1}^n x_i {\bf B}(:,i) \right\|_p \leq \sum_{i=1}^n \left| x_i \right| \left\| {\bf B}(:,i) \right\|_p \leq \left( \max_{j} \left\| {\bf B}(:,j) \right\|_p \right) \sum_{i=1}^n \left| x_i \right| = \left( \max_{j} \left\| {\bf B}(:,j) \right\|_p \right) \left\| {\bf x} \right\|_1
\end{align*}
Dividing both sides by $\left\| {\bf x} \right\|_1$ and taking a supremum, we arrive at
\[ \left\|{\bf B}\right\|_{1,p} \stackrel{def}{=} \sup_{0\neq {\bf x} \in \mathbb{R}^n} \frac{\left\|{\bf B}{\bf x} \right\|_p}{\left\| {\bf x} \right\|_1} \leq \max_{j} \left\| {\bf B}(:,j) \right\|_p \]
Also, let $j^*= \argmax_j \left\| {\bf B}(:,j) \right\|_p$ and we arrive at our conclusion by observing
\[ \max_{j} \left\| {\bf B}(:,j) \right\|_p = \frac{\left\|{\bf B}{\bf e}_{j^*} \right\|_p}{\left\| {\bf e}_{j^*} \right\|_1} \leq \sup_{0\neq {\bf x} \in \mathbb{R}^n} \frac{\left\|{\bf B}{\bf x} \right\|_p}{\left\| {\bf x} \right\|_1} \stackrel{def}{=} \left\|{\bf B}\right\|_{1,p} \] \end{proof} 

In particular, we make frequent use of the two following subordinate matrix norms:
\begin{align}
\left\|{\bf B}\right\|_{1, \infty} &= \max_{1 \leq j \leq n} \left\|{\bf B}{\bf e}_j \right\|_{\infty} = \max_{1\leq i,j \leq n} \left| {\bf B}(i,j) \right| \hspace{1.5cm} \text{({\bf Maximum entry norm})} \label{maxentrynorm} \\
\left\|{\bf B}\right\|_{1 , 2} &= \max_{1 \leq i \leq n} \left\|{\bf B}{\bf e}_j \right\|_2 = \max_{1 \leq j \leq n} \left\| {\bf B}(:,j) \right\|_2 \hspace{1.5cm} \text{({\bf Maximum $\ell_2$ column norm})} \label{l2colnorm}
\end{align}

The next lemma, from line $(6.19)$ in chapter 6.3 of \cite{highambk}, will allow us to control each operator norm $\| \cdot \|_p$ for $1\leq p\leq \infty$ of our residual error via the largest absolute column and row sum of the residual error.
\begin{theorem}[Special case of Riesz-Thorin theorem \cite{highambk}] \label{2_from_inf_and_1}
Let ${\bf B}  \in \mathbb{R}^{m\times n}$ and let $1\leq p\leq\infty$. Then
\[ \left\|{\bf B}\right\|_p \leq \left\|{\bf B}\right\|_1^{\frac{1}{p}} \left\|{\bf B}\right\|_{\infty}^{1-\frac{1}{p}} \]
\end{theorem}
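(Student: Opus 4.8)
The plan is to sidestep the machinery of complex interpolation and instead give the direct elementary argument based on H\"older's inequality that underlies line $(6.19)$ of \cite{highambk}; alternatively one could simply invoke the full Riesz--Thorin theorem, noting that $\|\cdot\|_1$ and $\|\cdot\|_\infty$ are precisely the operator norms at the two endpoints, but the self-contained route is short and preferable here. First I would dispose of the endpoints: when $p=1$ the asserted inequality reads $\|{\bf B}\|_1 \leq \|{\bf B}\|_1$ and when $p=\infty$ it reads $\|{\bf B}\|_\infty \leq \|{\bf B}\|_\infty$, both trivially true. So from now on assume $1<p<\infty$, let $p'=p/(p-1)$ be its conjugate exponent so that $1/p+1/p'=1$ and $1-1/p=1/p'$, and recall $\|{\bf B}\|_1$ is the maximum absolute column sum and $\|{\bf B}\|_\infty$ the maximum absolute row sum of ${\bf B}$.

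Next, fix ${\bf x}\in\mathbb{R}^n$ with ${\bf x}\neq 0$, write ${\bf y}={\bf B}{\bf x}$, and denote the entries of ${\bf B}$ by $b_{ij}$. The key step is to split each weight in $y_i=\sum_j b_{ij}x_j$ as $|b_{ij}|=|b_{ij}|^{1/p}\,|b_{ij}|^{1/p'}$ and apply H\"older's inequality with exponents $p$ and $p'$ to the factors $|b_{ij}|^{1/p}|x_j|$ and $|b_{ij}|^{1/p'}$. This gives
\[ |y_i| \;\leq\; \Bigl( \sum_{j=1}^n |b_{ij}|\,|x_j|^p \Bigr)^{1/p} \Bigl( \sum_{j=1}^n |b_{ij}| \Bigr)^{1/p'}. \]
Since $\sum_j |b_{ij}|$ is an absolute row sum of ${\bf B}$, it is at most $\|{\bf B}\|_\infty$, so raising to the $p$-th power yields $|y_i|^p \leq \|{\bf B}\|_\infty^{p/p'}\sum_j |b_{ij}|\,|x_j|^p$.

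Then I would sum over $i$, interchange the order of summation, and bound the inner sum $\sum_i |b_{ij}|$, which is an absolute column sum of ${\bf B}$, by $\|{\bf B}\|_1$:
\[ \|{\bf y}\|_p^p \;=\; \sum_{i=1}^m |y_i|^p \;\leq\; \|{\bf B}\|_\infty^{p/p'} \sum_{j=1}^n |x_j|^p \sum_{i=1}^m |b_{ij}| \;\leq\; \|{\bf B}\|_\infty^{p/p'}\,\|{\bf B}\|_1\,\|{\bf x}\|_p^p. \]
Taking $p$-th roots, dividing by $\|{\bf x}\|_p$, and taking the supremum over ${\bf x}\neq 0$ gives $\|{\bf B}\|_p \leq \|{\bf B}\|_1^{1/p}\|{\bf B}\|_\infty^{1/p'}$; substituting $1/p'=1-1/p$ completes the proof. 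The only point that requires care --- the ``main obstacle,'' such as it is --- is choosing the H\"older split so that the two resulting sums are exactly a $|b_{ij}|$-weighted $\ell_p$ norm of ${\bf x}$ and a plain row sum of ${\bf B}$; once the exponents are matched this way the rest is bookkeeping, and the degeneracies at $p'=\infty$ and $p'=1$ are precisely why the cases $p\in\{1,\infty\}$ were peeled off at the outset.
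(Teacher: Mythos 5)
Your proof is correct. The paper itself does not prove this statement --- it is quoted directly from line (6.19) of Higham's book \cite{highambk} --- so there is no in-paper argument to compare against; your elementary H\"older-based derivation (splitting $|b_{ij}| = |b_{ij}|^{1/p}|b_{ij}|^{1/p'}$, bounding the resulting row sum by $\|{\bf B}\|_\infty$ and the column sum by $\|{\bf B}\|_1$) is exactly the standard self-contained proof of this interpolation bound, and the endpoint cases $p\in\{1,\infty\}$ are handled correctly.
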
 
Setting $p=2$ in the above theorem, we get a bound on the spectral norm of $B$. The volume of a parallelepiped (parallelotope) formed from the columns of a matrix ${\bf B}$ is given by the absolute value of the determinant of $B$. A rectangle is formed by forcing the parallelepiped to have only right angles between each of its vectors. The next result states that the volume of parallelepiped (parallelotope) is bounded above by that of the corresponding rectangle (hyperrectangle). This result is crucial to deriving element and column growth factors.
\begin{theorem}[Hadamard's Inequality \cite{horn}] \label{hadamard} 
Let ${\bf B}  \in \mathbb{R}^{m \times m}$. Then, we have that
\[ \left| \det({\bf B}) \right| \leq \prod_{j=1}^{m} \left\| {\bf B}(:,j) \right\|_2 \]
\end{theorem}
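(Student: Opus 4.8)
The plan is to prove this via the QR factorization, which makes the inequality transparent as a statement about lengths versus orthogonal projections. First I would dispose of the trivial case: if ${\bf B}$ is singular then $\det({\bf B}) = 0$ while the right-hand side is a product of nonnegative numbers, so the inequality holds; this in particular covers the case where some column ${\bf B}(:,j)$ vanishes. Hence we may assume ${\bf B}$ is nonsingular, so that every column is nonzero.

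Next I would write ${\bf B} = QR$ with $Q \in \mathbb{R}^{m\times m}$ orthogonal and $R \in \mathbb{R}^{m\times m}$ upper triangular, and, after absorbing signs into $Q$, arrange that each diagonal entry $R(j,j) > 0$ (nonsingularity guarantees $R(j,j) \neq 0$). Since $|\det Q| = 1$, this gives $|\det({\bf B})| = |\det R| = \prod_{j=1}^m R(j,j)$. On the other hand, $Q$ preserves $\ell_2$ norms, so for each $j$,
\[ \left\| {\bf B}(:,j) \right\|_2 = \left\| Q R(:,j) \right\|_2 = \left\| R(:,j) \right\|_2 = \sqrt{ \sum_{i=1}^{j} R(i,j)^2 } \ \geq\ R(j,j), \]
the last step because $R$ is upper triangular and $R(j,j) > 0$. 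Taking the product over $j = 1, \dots, m$ yields $\prod_{j=1}^m \|{\bf B}(:,j)\|_2 \geq \prod_{j=1}^m R(j,j) = |\det({\bf B})|$, which is the claim.

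There is essentially no hard obstacle here; the only points requiring a little care are the bookkeeping of the singular/degenerate case and the standard fact that a square matrix admits a QR factorization with positive diagonal. As an alternative, purely algebraic route one may first scale the columns to unit length via ${\bf B} \mapsto {\bf B} D^{-1}$ with $D = \Diag(\|{\bf B}(:,1)\|_2, \dots, \|{\bf B}(:,m)\|_2)$, reducing to the case of unit-norm columns, and then observe that $\det({\bf B}^T {\bf B})$ is a product of nonnegative eigenvalues whose sum equals $\mathrm{tr}({\bf B}^T {\bf B}) = m$, so the AM--GM inequality forces $\det({\bf B}^T {\bf B}) \leq 1$, i.e. $|\det({\bf B})| \leq 1$. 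Either way, equality holds precisely when the columns of ${\bf B}$ are mutually orthogonal, which is worth noting since the element-growth applications below exploit the strictness of the bound.
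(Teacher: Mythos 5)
Your proof is correct. The paper does not actually prove this statement --- it imports Hadamard's inequality as a known result from \cite{horn} and uses it as a black box in the growth-factor analysis --- so there is no in-paper argument to compare against. Your QR route is the standard textbook proof and is complete: the reduction to the nonsingular case, the normalization $R(j,j)>0$, the identity $|\det {\bf B}| = \prod_j R(j,j)$, and the bound $\|{\bf B}(:,j)\|_2 = \|R(:,j)\|_2 \geq R(j,j)$ are all sound, and the AM--GM alternative via $\mathrm{tr}({\bf B}^T{\bf B}) = m$ after column normalization is likewise valid. One small caveat on your closing remark: the element-growth bounds in the paper (e.g.\ in the proof of Theorem \ref{gercpgf}) use only the inequality itself, not the characterization of equality; the paper's comment that its growth bounds are ``provably unattainable for $n\geq 3$'' is a consequence of the impossibility of simultaneous equality across all the Hadamard applications, but nothing in your proof needs to be strengthened for the uses made here.
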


\subsubsection{Tools from Probability Theory}
The union bound is a basic yet important result whose proof is typically left as an exercise in most introductory probability textbooks. We will make use of it in combination with the famous De Morgan laws to consider the probability of an intersection of highly coupled events in the analysis of GERCP.
\begin{lemma}[Union Bound or Boole's Inequality] \label{unionb}
For events $E_1, E_2, ..., E_m$, we have that
\[ \mathbb{P} \left( \bigcup_{i = 1}^{m} E_i \right) \leq \sum_{i=1}^{m} \mathbb{P} \left( E_i \right) \]
\end{lemma}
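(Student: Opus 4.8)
The plan is to reduce the union, which may consist of highly overlapping events, to a \emph{disjoint} union, on which the additivity axiom of a probability measure applies directly. First I would introduce the ``disjointified'' events $F_1 = E_1$ and, for $2 \le i \le m$, $F_i = E_i \setminus \bigcup_{j=1}^{i-1} E_j$. Two elementary set-theoretic facts then need checking: the $F_i$ are pairwise disjoint, and $\bigcup_{i=1}^m F_i = \bigcup_{i=1}^m E_i$. Both are short: if $x$ lies in $\bigcup_{i} E_i$, take the smallest index $i$ with $x \in E_i$, and then $x \in F_i$; disjointness is immediate from the construction, since $F_i$ by definition excludes everything already covered by $E_1, \dots, E_{i-1}$.

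With that in hand, I would invoke finite additivity of $\mathbb{P}$ on the disjoint family $\{F_i\}_{i=1}^m$ to obtain $\mathbb{P}\!\left(\bigcup_{i} E_i\right) = \mathbb{P}\!\left(\bigcup_{i} F_i\right) = \sum_{i=1}^m \mathbb{P}(F_i)$. Since $F_i \subseteq E_i$, monotonicity of $\mathbb{P}$ — itself a consequence of nonnegativity applied to the disjoint decomposition $E_i = F_i \cup \bigl(E_i \cap \bigcup_{j<i} E_j\bigr)$ — yields $\mathbb{P}(F_i) \le \mathbb{P}(E_i)$ for each $i$. Summing these $m$ inequalities gives the claimed bound.

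An alternative route is induction on $m$: the base case $m = 1$ is trivial, and the inductive step applies the inclusion--exclusion identity $\mathbb{P}(A \cup B) = \mathbb{P}(A) + \mathbb{P}(B) - \mathbb{P}(A \cap B) \le \mathbb{P}(A) + \mathbb{P}(B)$ with $A = \bigcup_{i=1}^{m-1} E_i$ and $B = E_m$, followed by the induction hypothesis on $\mathbb{P}(A)$. There is essentially no real obstacle here; the only point requiring care is not to assume the conclusion (or more than finite additivity) prematurely — every step must trace back to nonnegativity and finite additivity of the measure. I would present the disjointification argument as the primary proof, since it extends verbatim to countably infinite families should that form ever be needed.
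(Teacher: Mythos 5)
Your proof is correct and complete. Note that the paper itself supplies no proof of this lemma at all --- it explicitly remarks that the union bound ``is typically left as an exercise in most introductory probability textbooks'' and simply states the result --- so there is nothing to compare against. Your disjointification argument (setting $F_1 = E_1$ and $F_i = E_i \setminus \bigcup_{j<i} E_j$, verifying pairwise disjointness and equality of unions, then applying finite additivity and monotonicity) is the standard textbook proof, every step traces back to nonnegativity and finite additivity as you intend, and the alternative induction via $\mathbb{P}(A \cup B) \le \mathbb{P}(A) + \mathbb{P}(B)$ is equally valid.
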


As GERCP is a randomized algorithm, the factorization it produces will be random. The Law of Total Probability below is the basis on which we analyze the reliability of GERCP regardless of the column and row permutations chosen by GERCP.
\begin{theorem}[Law of Total Probability]\label{totalprob}
Given $m$ mutually exclusive events $E_1, \cdots, E_m$ whose probabilities sum to unity, then
\[{\displaystyle  \mathbb{P} \left(B\right) =  \sum_{i=1}^{m} \mathbb{P} \left(B | E_i\right) \mathbb{P} \left( E_i\right), } \]
where $B$ is an arbitrary event, and $\mathbb{P} \left(B | E_i\right)$  is the conditional probability of $B$ assuming $E_i$.
\end{theorem}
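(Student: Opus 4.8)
The plan is to derive this directly from finite additivity of probability together with the definition of conditional probability. First I would observe that since the events $E_1,\dots,E_m$ are mutually exclusive and satisfy $\sum_{i=1}^m \mathbb{P}(E_i) = 1$, their union $\bigcup_{i=1}^m E_i$ has probability one. Consequently, for the arbitrary event $B$ we may write $B$ (up to a set of measure zero) as the disjoint union $B = \bigcup_{i=1}^m (B \cap E_i)$, where the sets $B \cap E_i$ inherit mutual exclusivity from the $E_i$.

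Next I would apply finite additivity to this disjoint decomposition to obtain
\[ \mathbb{P}(B) = \sum_{i=1}^m \mathbb{P}(B \cap E_i). \]
Then, invoking the definition of conditional probability, $\mathbb{P}(B \mid E_i) = \mathbb{P}(B \cap E_i)/\mathbb{P}(E_i)$ whenever $\mathbb{P}(E_i) > 0$, each summand rewrites as $\mathbb{P}(B \cap E_i) = \mathbb{P}(B \mid E_i)\,\mathbb{P}(E_i)$, which yields the claimed identity.

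The only point requiring a little care — and the closest thing to an obstacle — is the treatment of events $E_i$ with $\mathbb{P}(E_i) = 0$, for which $\mathbb{P}(B \mid E_i)$ is not defined by the usual ratio. I would dispatch this by adopting the standard convention that such terms contribute $0$ to the sum (equivalently, discarding null events from the list, which does not change either side since $\mathbb{P}(B \cap E_i) \le \mathbb{P}(E_i) = 0$). With that convention in place the argument above goes through verbatim, and no genuinely hard step remains; the result is essentially a bookkeeping consequence of additivity.
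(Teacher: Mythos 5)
Your proof is correct and is the standard argument: decompose $B$ as the disjoint union of the sets $B\cap E_i$, apply finite additivity, and rewrite each term via the definition of conditional probability, with the usual convention handling any $E_i$ of probability zero. The paper itself states this theorem as known background and offers no proof, so there is nothing to compare against; your write-up supplies exactly the routine justification one would expect.
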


We will also need the following large deviation result, due to Johnson and Lindenstrauss, from probability theory and theoretical computer science. Theorem \ref{randproj} has been the main theoretical foundation in the recent development of randomized algorithms in numerical linear algebra and data analysis. It will also be the primary theoretical justification for the reliability of GERCP. 

\begin{theorem} [Random Projection Method (Johnson-Lindenstrauss) \cite{vempala}] \label{randproj} 
Let ${\bf x} \in \mathbb{R}^d$ and $\epsilon > 0$. Assume that the entries in $\Omega \in \mathbb{R}^{r \times d}$ are sampled independently from $\mathcal{N}(0,1)$. Then
\begin{align*}
\mathbb{P}\left( \left(1-\epsilon \right) \left\| {\bf x} \right\|_2^2 \geq \left\| \frac{1}{\sqrt{r}} \Omega {\bf x} \right\|_2^2 \right) &\leq \exp \left(-\frac{(\epsilon^2 - \epsilon^3)r}{4}\right) \\
\mathbb{P}\left( \left\| \frac{1}{\sqrt{r}} \Omega {\bf x} \right\|_2^2 \geq (1+\epsilon) \left\| {\bf x} \right\|_2^2 \right) &\leq \exp \left(-\frac{(\epsilon^2 - \epsilon^3)r}{4}\right)
\end{align*}
and
\begin{equation}\label{Eqn:randproj}
{\displaystyle\mathbb{P}\left( \left(1-\epsilon \right) \left\| {\bf x} \right\|_2^2 \leq \left\| \frac{1}{\sqrt{r}} \Omega {\bf x} \right\|_2^2 \leq (1+\epsilon) \left\| {\bf x} \right\|_2^2 \right) \geq 1 - 2 \exp \left(-\frac{(\epsilon^2 - \epsilon^3)r}{4}\right).}
\end{equation}
\end{theorem}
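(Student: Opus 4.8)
The plan is to reduce the statement to a two-sided tail bound for a chi-squared random variable and then run the standard Chernoff moment-generating-function argument. Since both $\left\| \frac{1}{\sqrt r}\Omega\mathbf{x}\right\|_2^2$ and $\left\|\mathbf{x}\right\|_2^2$ are homogeneous of degree two in $\mathbf{x}$, I would first normalize $\left\|\mathbf{x}\right\|_2 = 1$ (the case $\mathbf{x} = \mathbf{0}$ being trivial), and I would also assume $0 < \epsilon < 1$, since for $\epsilon \ge 1$ the right-hand sides are at least $1$ and the bounds hold vacuously. Writing $Y_i = \mathbf{e}_i^T\Omega\mathbf{x} = \sum_{j=1}^d \Omega_{ij}x_j$, each $Y_i$ is a linear combination of independent standard normals, hence $Y_i \sim \mathcal{N}(0,\sum_j x_j^2) = \mathcal{N}(0,1)$; moreover $Y_1,\dots,Y_r$ are mutually independent because distinct rows of $\Omega$ use disjoint collections of its entries. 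Therefore $\left\|\Omega\mathbf{x}\right\|_2^2 = \sum_{i=1}^r Y_i^2$ follows a $\chi^2_r$ law, and the theorem reduces to controlling $\mathbb{P}\!\left(\sum_i Y_i^2 \le (1-\epsilon)r\right)$ and $\mathbb{P}\!\left(\sum_i Y_i^2 \ge (1+\epsilon)r\right)$.

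For the upper tail I would apply Markov's inequality to $\exp\!\left(t\sum_i Y_i^2\right)$ for $0 < t < \tfrac12$, using independence together with the moment generating function $\mathbb{E}\,e^{tY_i^2} = (1-2t)^{-1/2}$, to get $\mathbb{P}\!\left(\sum_i Y_i^2 \ge (1+\epsilon)r\right) \le \left((1-2t)^{-1/2}e^{-t(1+\epsilon)}\right)^r$. Minimizing the exponent over $t$ leads to the choice $t = \tfrac{\epsilon}{2(1+\epsilon)}$, at which $1-2t = (1+\epsilon)^{-1}$ and the bound collapses to $\left((1+\epsilon)e^{-\epsilon}\right)^{r/2}$. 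The lower tail is treated symmetrically: Markov applied to $\exp\!\left(-t\sum_i Y_i^2\right)$ with $t > 0$ and $\mathbb{E}\,e^{-tY_i^2} = (1+2t)^{-1/2}$ gives $\mathbb{P}\!\left(\sum_i Y_i^2 \le (1-\epsilon)r\right) \le \left((1+2t)^{-1/2}e^{t(1-\epsilon)}\right)^r$, and the optimal $t = \tfrac{\epsilon}{2(1-\epsilon)}$ yields $\left((1-\epsilon)e^{\epsilon}\right)^{r/2}$.

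It then remains to massage these closed forms into the stated exponential bounds. Using the elementary estimate $\log(1+\epsilon) \le \epsilon - \tfrac{\epsilon^2}{2} + \tfrac{\epsilon^3}{3}$ (whose derivative check is routine), one finds $\log\!\left((1+\epsilon)e^{-\epsilon}\right) \le -\tfrac{\epsilon^2}{2} + \tfrac{\epsilon^3}{3} \le -\tfrac{\epsilon^2-\epsilon^3}{2}$, so the upper-tail probability is at most $\exp\!\left(-\tfrac{(\epsilon^2-\epsilon^3)r}{4}\right)$; likewise $\log(1-\epsilon) \le -\epsilon - \tfrac{\epsilon^2}{2}$ gives $\log\!\left((1-\epsilon)e^{\epsilon}\right) \le -\tfrac{\epsilon^2}{2} \le -\tfrac{\epsilon^2-\epsilon^3}{2}$ and the identical bound for the lower tail. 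Finally, \eqref{Eqn:randproj} follows by taking complements of the two one-sided events and applying the union bound (Lemma \ref{unionb}). I expect the only real friction to be the bookkeeping in the $t$-optimization and pinning down the logarithmic inequalities with precisely the constants needed to produce the clean $(\epsilon^2-\epsilon^3)/4$ exponent; the distributional reduction to $\chi^2_r$ is standard but deserves an explicit sentence, since that is exactly where the Gaussianity of the sketching matrix $\Omega$ enters.
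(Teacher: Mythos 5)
Your proposal is correct: the reduction to a $\chi^2_r$ tail bound, the Chernoff/moment-generating-function optimization yielding $\left((1+\epsilon)e^{-\epsilon}\right)^{r/2}$ and $\left((1-\epsilon)e^{\epsilon}\right)^{r/2}$, the logarithmic estimates producing the $(\epsilon^2-\epsilon^3)/4$ exponent, and the final union bound are all sound, and your handling of the degenerate cases ($\mathbf{x}=\mathbf{0}$, $\epsilon\geq 1$) is appropriate. Note that the paper itself offers no proof of this theorem --- it is imported from \cite{vempala} as a known result --- so your argument is exactly the standard one found there, and there is nothing in the paper to contrast it against.
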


Due to the central importance of Theorem \ref{randproj} in GERCP, we make the next definition
\begin{definition} \label{Def:JLvec} A given vector ${\bf x} \in \mathbb{R}^d$ satisfies the $\epsilon-$JL condition under random mapping $\Omega$ if 
\[{\displaystyle \sqrt{1-\epsilon } \left\| {\bf x} \right\|_2 \leq \left\| \frac{1}{\sqrt{r}} \Omega {\bf x} \right\|_2 \leq \sqrt{1+\epsilon }\left\| {\bf x} \right\|_2}. \]
\end{definition}

\begin{REMARK} Despite its simplicity, Theorem \ref{randproj} asserts the surprisingly strong norm-preserving abilities under a random projection. For any given $\Delta \in (0,1)$, $x$ satisfies the $\epsilon-$JL condition under random mapping $\Omega$ with probability at least $1-\Delta$ for any 
\begin{equation}\label{Eqn:r}
{\displaystyle r \geq  \frac{4}{\epsilon^2 - \epsilon^3} \log\left( \frac{2}{\Delta}\right).}
\end{equation}

\noindent In particular, for $\epsilon = \frac{1}{2}$ and $\Delta = 10^{-5}$, $r = 400$ satisfies equation~\eqref{Eqn:r}, regardless of $d$. In practice, however, one can typically choose a much smaller value of $r$ for $x$ to satisfy $\epsilon-$JL condition.
\end{REMARK}

\subsection{Numerical Error and Stability of LU Factorization}
With computer roundoff, Theorem 9.3 of \cite{highambk} gives us that our computed $LU$ factorization obeys the relationship ${\bf A}  + E = LU$ such that   
\[ \left| E_{jk} \right| \leq \frac{n \epsilon_{mach}}{1-n\epsilon_{mach}} \left( \left| L \right| \cdot \left| U \right| \right)_{jk},  \]
where $E \in \mathbb{R}^{n\times n}$ and $\left| \cdot \right|$ denotes taking the absolute value of each entry of a matrix. For linear systems $LUx = b$, the backwards stability of forward/backward substitution tells us that our computer will calculate $\hat{x}$ which satisfies the following approximation
\[ \left( L + \delta L \right) \left( U + \delta U \right) \hat{x} = b \]
Thus, Gaussian elimination for the linear system is backwards stable if we can provide a tight bound on $\delta A$ such that $(A+\delta A) \hat{x} = b$ where
\[ \delta{\bf A}= \delta L U + L \delta U + \delta L \delta U + E \]
Theorem 9.4 of \cite{highambk} tells us that $\delta A$ must satisfy
\begin{equation} \label{deltaAform} \left| \delta{\bf A}\right| \leq \frac{3n\epsilon_{mach}}{1-3n\epsilon_{mach}} \left| L \right| \left| U \right|  \end{equation}
Therefore, in order to bound $\delta A$, we need to simply bound $L$ and $U$ of our computed factorization. Next, we define a property that some pivoting strategies enjoy.
\begin{definition} [Top-Heavy Pivoting Strategies] \label{topheavy}
A pivoting strategy for Gaussian elimination or the LU decomposition is called {\bf top-heavy} if the pivoting strategy leaves the first entry of the leading column of each Schur complement to be the entry with largest modulus in the leading column. In other words,
\begin{equation} \label{eqn:topheavy} \left| S_k^{\Pi_c}(k,k) \right| = \max_{k \leq i \leq n} \left| S_k^{\Pi_c}(i,k) \right| \end{equation}
or, in other words
\[ \left| S_k^{\Pi_c}(k,k) \right| = \| S_k^{\Pi_c}(:,k) \|_{\infty} \]
for all $1\leq k \leq n$.
\end{definition}
All of the methods discussed in this paper (partial, complete, rook, $\ell_2$ complete and randomized complete pivoting) are {\bf top-heavy} pivoting strategies. Therefore, all of these strategies enjoy the following property
\begin{lemma}
Let ${\bf A} \in\mathbb{R}^{n\times n}$ and let the lower triangular matrix $L$ be obtained by the LU algorithm above with a top-heavy pivoting strategy. Then, we have
\[ \left\| L \right\|_{p} \leq n \]
where $1\leq p\leq \infty$.
\end{lemma}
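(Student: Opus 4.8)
The plan is to bound each entry of $L$ directly, then pass from entrywise bounds to the operator norm $\|\cdot\|_p$ via the Riesz--Thorin interpolation inequality (Theorem~\ref{2_from_inf_and_1}). The crucial observation is that the top-heavy property controls the multipliers. Recall that the entries of $L$ below the diagonal in column $k$ are exactly the multipliers $A_{k+1}(i,k) = S_k^{\Pi_c}(i,k)/S_k^{\Pi_c}(k,k)$ for $i > k$ (using the notation relating $L_{k+1}$ to the working matrices, together with the fact that $S_k^{\Pi_c}(k:n,k:n) = A_k^{\Pi_c}(k:n,k:n)$). The top-heavy condition \eqref{eqn:topheavy} states precisely that $\left|S_k^{\Pi_c}(k,k)\right| = \max_{k\le i\le n}\left|S_k^{\Pi_c}(i,k)\right|$, so each such multiplier has modulus at most $1$. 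The diagonal entries of $L$ equal $1$, and the strictly upper-triangular entries equal $0$. Hence every entry of $L$ satisfies $\left|L(i,j)\right| \le 1$, i.e. $\|L\|_{1,\infty} \le 1$ in the maximum-entry-norm notation \eqref{maxentrynorm}.

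Next I would convert this into bounds on $\|L\|_1$ and $\|L\|_\infty$. Since $L$ is unit lower triangular, row $i$ of $L$ has exactly $i$ nonzero entries, each of modulus at most $1$, so the $i$th absolute row sum is at most $i \le n$; taking the maximum over $i$ gives $\|L\|_\infty \le n$. Similarly, column $j$ of $L$ has exactly $n-j+1$ nonzero entries, each of modulus at most $1$, so the $j$th absolute column sum is at most $n-j+1 \le n$, giving $\|L\|_1 \le n$.

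Finally I would apply Theorem~\ref{2_from_inf_and_1} with ${\bf B} = L$: for any $1 \le p \le \infty$,
\[
\left\|L\right\|_p \;\le\; \left\|L\right\|_1^{\frac{1}{p}}\,\left\|L\right\|_{\infty}^{1-\frac{1}{p}} \;\le\; n^{\frac{1}{p}}\,n^{1-\frac{1}{p}} \;=\; n,
\]
which is the claimed bound. (The endpoint cases $p=1$ and $p=\infty$ are already covered directly by the previous paragraph.)

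I do not expect any serious obstacle here; the statement is essentially a bookkeeping consequence of the definitions. The one point that requires a little care is making explicit the identification between the subdiagonal entries of $L$ and the normalized columns of the pivoted Schur complements $S_k^{\Pi_c}$, so that the top-heavy inequality \eqref{eqn:topheavy} can legitimately be invoked to bound the multipliers by $1$; once that identification is in place, the rest is immediate. If one wanted to avoid invoking Riesz--Thorin, an alternative is to note that for unit lower triangular $L$ with entries bounded by $1$ one has $\|L\|_p \le \|\,|L|\,\|_p \le \|\,E_{\text{lower}}\,\|_p$ where $E_{\text{lower}}$ is the all-ones lower triangular matrix, and bound the latter crudely by $n$; but the interpolation route is cleanest and is the one I would present.
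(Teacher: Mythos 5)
Your proposal is correct and follows essentially the same route as the paper: bound every entry of $L$ by $1$ using the top-heavy condition, deduce $\|L\|_1\le n$ and $\|L\|_\infty\le n$ from the row and column sums, and interpolate with Theorem~\ref{2_from_inf_and_1}. The paper's version is just a terser rendering of the same argument, so no further comment is needed.
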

\begin{proof}
First, note that top-heaviness gives us that
\[ \left|l_{jk}\right| = \frac{\left| S_k(j,k) \right|}{\left| S_k(k,k) \right|} \leq 1 \] 
for $k \leq j$. Thus, 
\[ \|L\|_1 \leq n, \hspace{1cm} \|L\|_{\infty} \leq n \hspace{0.5cm} \text{and} \hspace{0.5cm} \|L\|_p \leq \| L \|_1^{\frac{1}{p}} \| L \|_{\infty}^{1-\frac{1}{p}} \leq n \]
where the last inequality was established by Theorem \ref{2_from_inf_and_1}. 
\end{proof} 

It is easy to see that all top-heavy strategies enjoy the bound $\left\| L \right\|_{p} \leq n$ for $1\leq p\leq \infty$ because all the entries below the diagonal in $L$ will be between $-1$ and $1$ in addition to Lemma \ref{2_from_inf_and_1}. Bounding $U$ is a little trickier and historically, the element growth factor has been used to do it.
\begin{definition} [Element Growth Factor] 
Let $n > 0$ and ${\bf A}  \in \mathbb{R}^{n\times n}$ 
\[\rho_{elem} \left({\bf A}\right) \stackrel{def}{=} \frac{\max_k \left\| S_k \right\|_{1,\infty}}{\left\|{\bf A}\right\|_{1,\infty}} = \frac{\max_{i,j,k} \left| S_k(i,j) \right|} {\max_{i,j} \left| {\bf A}(i,j) \right| }\]
\end{definition}
In addition to the classical element growth factor, we define the new column growth factor which will be central to our analysis.
\begin{definition} [Column Growth Factor] 
Let $n > 0$ and ${\bf A}  \in \mathbb{R}^{n\times n}$ 
\[\rho_{col} \left({\bf A}\right) \stackrel{def}{=} \frac{\max_k \left\| S_k \right\|_{1,2}}{\left\|{\bf A}\right\|_{1,2}} = \frac{\max_{j,k} \left\| S_k(:,j) \right\|_2} {\max_{j} \left\| {\bf A}(:,j) \right\|_2 }\]
\end{definition}
These two definitions of the growth factor are related by the following lemma. It is important to note that the column growth factor commonly attains the lower bound of $\frac{1}{\sqrt{n}} \rho_{elem}$ as we will see with partial and complete pivoting; making $\rho_{col}$ a more informative quantity to control than $\rho_{elem}$ by a factor of $\sqrt{n}$.
\begin{lemma} \label{lfactorbound}
Let $n>0$ and ${\bf A}  \in \mathbb{R}^{n\times n}$, then
\[ \frac{1}{\sqrt{n}} \rho_{elem} \left({\bf A}\right) \leq \rho_{col} \left({\bf A}\right) \leq \sqrt{n} \rho_{elem} \left({\bf A}\right) \]
\end{lemma}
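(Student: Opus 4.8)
The plan is to reduce everything to a single column-wise comparison between $\|\cdot\|_2$ and $\|\cdot\|_\infty$ coming from Lemma~\ref{inf_to_2}, and then to feed this into the ratio definitions of $\rho_{elem}$ and $\rho_{col}$. First I would record the elementary matrix-norm fact that for any ${\bf B}\in\mathbb{R}^{m\times N}$,
\[ \left\|{\bf B}\right\|_{1,\infty} \;\leq\; \left\|{\bf B}\right\|_{1,2} \;\leq\; \sqrt{m}\,\left\|{\bf B}\right\|_{1,\infty}. \]
This follows by applying Lemma~\ref{inf_to_2} to each column ${\bf B}(:,j)\in\mathbb{R}^m$ — giving $\|{\bf B}(:,j)\|_\infty \leq \|{\bf B}(:,j)\|_2 \leq \sqrt{m}\,\|{\bf B}(:,j)\|_\infty$ — and then taking the maximum over $j$, using the characterizations \eqref{maxentrynorm} and \eqref{l2colnorm} of $\|\cdot\|_{1,\infty}$ and $\|\cdot\|_{1,2}$.

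Next I would specialize this inequality twice. Applied to ${\bf A}\in\mathbb{R}^{n\times n}$ it gives $\|{\bf A}\|_{1,\infty}\leq\|{\bf A}\|_{1,2}\leq\sqrt{n}\,\|{\bf A}\|_{1,\infty}$. Applied to each Schur complement $S_k$, which as a matrix has $n-k+1\leq n$ rows, it gives $\|S_k\|_{1,\infty}\leq\|S_k\|_{1,2}\leq\sqrt{n-k+1}\,\|S_k\|_{1,\infty}\leq\sqrt{n}\,\|S_k\|_{1,\infty}$; taking the maximum over $k$ yields
\[ \max_k \left\|S_k\right\|_{1,\infty} \;\leq\; \max_k \left\|S_k\right\|_{1,2} \;\leq\; \sqrt{n}\,\max_k \left\|S_k\right\|_{1,\infty}. \]

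Finally I would assemble the ratios. For the upper bound, combine the numerator estimate $\max_k\|S_k\|_{1,2}\leq\sqrt{n}\,\max_k\|S_k\|_{1,\infty}$ with the denominator estimate $\|{\bf A}\|_{1,2}\geq\|{\bf A}\|_{1,\infty}$ to get $\rho_{col}({\bf A})\leq\sqrt{n}\,\rho_{elem}({\bf A})$. For the lower bound, combine $\max_k\|S_k\|_{1,2}\geq\max_k\|S_k\|_{1,\infty}$ with $\|{\bf A}\|_{1,2}\leq\sqrt{n}\,\|{\bf A}\|_{1,\infty}$ to get $\rho_{col}({\bf A})\geq\frac{1}{\sqrt{n}}\,\rho_{elem}({\bf A})$. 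I do not expect any real obstacle here; the only point requiring a moment of care is that the row-dimension of $S_k$ is $n-k+1$ rather than $n$, so that the constant in the column-norm comparison for $S_k$ is at most $\sqrt{n}$ uniformly in $k$ (and in fact smaller for $k>1$), and that the division by $\|{\bf A}\|_{1,2}$ and $\|{\bf A}\|_{1,\infty}$ is legitimate since ${\bf A}$ is invertible and hence nonzero.
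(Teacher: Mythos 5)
Your proposal is correct and fills in exactly the argument the paper intends: the paper's proof is simply the remark ``Easy consequence of Lemma~\ref{inf_to_2},'' and your column-wise application of that lemma followed by the separate numerator/denominator estimates is the standard way to make that remark precise. No gaps.
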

\begin{proof} Easy consequence of Lemma \ref{inf_to_2}. \end{proof}

Using the definition of element growth, we can bound $U$ in the $\| \cdot \|_1$ and $\| \cdot \|_{\infty}$ norms as
\[ \left\| U \right\|_{\eta} \leq n \rho_{elem} \left({\bf A}\right) \left\|{\bf A}\right\|_{1,\infty} \hspace{1cm} \text{for } \eta= 1,\infty \]
Plugging both of these estimates into Theorem \ref{2_from_inf_and_1}, we get that
\[ \left\| U \right\|_{p} \leq n \rho_{elem} \left({\bf A}\right) \left\|{\bf A}\right\|_{1,\infty} \hspace{1cm} \text{for } 1\leq p\leq \infty \]
This gives us the following classical result  
\begin{theorem} [Wilkinson \cite{demmelbk,highambk}] \label{elemback} Let ${\bf A}  \in \mathbb{R}^{n\times n}$ and let $E$ and $\delta {\bf A}$ be given above, then for any top-heavy pivoting strategy, we have 
\begin{eqnarray*}
\left\| \delta{\bf A}\right\|_{p} &\leq& \frac{3 n\epsilon_{mach}}{1-3n\epsilon_{mach}} n^2 \rho_{elem} \left({\bf A}\right) \left\|{\bf A}\right\|_{1,\infty} \\
\left\| E \right\|_{\eta} &\leq& \frac{ n\epsilon_{mach}}{1-n\epsilon_{mach}} n^2 \rho_{elem} \left({\bf A}\right) \left\|{\bf A}\right\|_{1,\infty} 
\end{eqnarray*}
where $1\leq p\leq \infty$.
\end{theorem}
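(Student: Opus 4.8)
The plan is to obtain the theorem by assembling three ingredients already in hand: the componentwise bound \eqref{deltaAform} on $\delta\mathbf{A}$, the analogous componentwise bound $\left|E_{jk}\right| \le \frac{n\epsilon_{mach}}{1-n\epsilon_{mach}}\left(\left|L\right|\left|U\right|\right)_{jk}$ on the roundoff matrix $E$, and the norm estimates $\|L\|_p \le n$ and $\|U\|_p \le n\rho_{elem}(\mathbf{A})\|\mathbf{A}\|_{1,\infty}$ derived just above. The one point requiring care is that these componentwise bounds control $\left|\delta\mathbf{A}\right|$ and $\left|E\right|$ entrywise rather than $\delta\mathbf{A}$ and $E$ directly, so first I would record two elementary facts about the subordinate norm $\|\cdot\|_p$: (i) $\|\mathbf{B}\|_p \le \|\,\left|\mathbf{B}\right|\,\|_p$ for any $\mathbf{B}$, and (ii) if $\mathbf{0} \le \mathbf{C} \le \mathbf{D}$ entrywise then $\|\mathbf{C}\|_p \le \|\mathbf{D}\|_p$. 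Both follow at once from the definition of $\|\cdot\|_p$ together with the entrywise inequality $\left|\mathbf{B}\mathbf{x}\right| \le \left|\mathbf{B}\right|\left|\mathbf{x}\right|$, the monotonicity of the vector $p$-norm on nonnegative vectors, and $\|\,\left|\mathbf{x}\right|\,\|_p = \|\mathbf{x}\|_p$.

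Next I would observe that the norm estimates established above apply verbatim to $\left|L\right|$ and $\left|U\right|$, since these matrices have the same entrywise magnitudes: because $\left|L\right|$ has unit diagonal and all off-diagonal entries in $[0,1]$ by top-heaviness, $\|\,\left|L\right|\,\|_1 \le n$ and $\|\,\left|L\right|\,\|_\infty \le n$, hence $\|\,\left|L\right|\,\|_p \le n$ for all $1 \le p \le \infty$ by Theorem \ref{2_from_inf_and_1}; and the element-growth argument gives $\|\,\left|U\right|\,\|_\eta \le n\rho_{elem}(\mathbf{A})\|\mathbf{A}\|_{1,\infty}$ for $\eta \in \{1,\infty\}$, which Theorem \ref{2_from_inf_and_1} again upgrades to all $p$. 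With these in place the conclusion is a short chain: combining $\|\delta\mathbf{A}\|_p \le \|\,\left|\delta\mathbf{A}\right|\,\|_p$, inequality \eqref{deltaAform}, fact (ii), and submultiplicativity of the induced norm,
\[ \|\delta\mathbf{A}\|_p \;\le\; \frac{3n\epsilon_{mach}}{1-3n\epsilon_{mach}}\,\bigl\|\,\left|L\right|\left|U\right|\,\bigr\|_p \;\le\; \frac{3n\epsilon_{mach}}{1-3n\epsilon_{mach}}\,\|\,\left|L\right|\,\|_p\,\|\,\left|U\right|\,\|_p \;\le\; \frac{3n\epsilon_{mach}}{1-3n\epsilon_{mach}}\,n^2\,\rho_{elem}(\mathbf{A})\,\|\mathbf{A}\|_{1,\infty}, \]
and the identical argument applied to the componentwise bound on $E$ yields the second inequality with the constant $\frac{3n\epsilon_{mach}}{1-3n\epsilon_{mach}}$ replaced by $\frac{n\epsilon_{mach}}{1-n\epsilon_{mach}}$.

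I do not anticipate a genuine obstacle here: the statement is essentially a repackaging of results already proved in this section. The only thing to stay vigilant about is the passage from entrywise inequalities to operator-norm inequalities for a general exponent $p$ (and not merely for $p = 1, \infty$, where such passages are routine); this is dispatched once and for all by facts (i)--(ii) together with Theorem \ref{2_from_inf_and_1}, and requires no new estimate on $L$ or $U$ beyond those already established.
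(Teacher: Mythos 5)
Your proposal is correct and follows essentially the same route as the paper, which omits an explicit proof and instead points to the analogous argument for Theorem \ref{colback}: bound $\left\| L \right\|_p \leq n$ via top-heaviness, bound $\left\| U \right\|_{p} \leq n \rho_{elem}\left({\bf A}\right) \left\|{\bf A}\right\|_{1,\infty}$ for $\eta=1,\infty$ and interpolate with Theorem \ref{2_from_inf_and_1}, then combine with the componentwise bounds on $E$ and $\delta{\bf A}$. Your explicit justification of the passage from the entrywise inequality $\left|\delta{\bf A}\right| \leq c \left|L\right|\left|U\right|$ to the induced-norm inequality for general $p$ fills a step the paper glosses over, but it is a refinement of the same argument rather than a different one.
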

The proof of the above is similar to the proof of the corresponding result for the column growth factor.
\begin{theorem} [Column growth control on backward error] \label{colback}
Let ${\bf A}  \in \mathbb{R}^{n\times n}$ and let $E$ and $\delta {\bf A}$ be given above, then for any top-heavy pivoting strategy, we have 
\begin{eqnarray*}
\left\| \delta{\bf A}\right\|_{p} &\leq& \frac{3 n\epsilon_{mach}}{1-3n\epsilon_{mach}} n^2 \rho_{col} \left({\bf A}\right) \left\|{\bf A}\right\|_{1,2} \\
\left\| E \right\|_{p} &\leq& \frac{n\epsilon_{mach}}{1-n\epsilon_{mach}} n^2 \rho_{col} \left({\bf A}\right) \left\|{\bf A}\right\|_{1,2}
\end{eqnarray*}
where $1\leq p\leq \infty$.
\end{theorem}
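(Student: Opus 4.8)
The plan is to run the argument that precedes Theorem~\ref{elemback}, but with the maximum $\ell_2$ column norm $\|\cdot\|_{1,2}$ everywhere in place of the maximum entry norm $\|\cdot\|_{1,\infty}$. As there, the starting points are the entrywise bounds of Theorems 9.3 and 9.4 of \cite{highambk}: $|E|\le\frac{n\epsilon_{mach}}{1-n\epsilon_{mach}}|L|\,|U|$ and, via \eqref{deltaAform}, $|\delta{\bf A}|\le\frac{3n\epsilon_{mach}}{1-3n\epsilon_{mach}}|L|\,|U|$, both understood entrywise. An entrywise inequality $|C|\le c\,M$ with $M\ge 0$ forces $\|C\|_1\le c\|M\|_1$ and $\|C\|_\infty\le c\|M\|_\infty$, hence $\|C\|_p\le c\,\|M\|_1^{1/p}\|M\|_\infty^{1-1/p}$ by Theorem~\ref{2_from_inf_and_1}; so it suffices to bound $\| |L|\,|U| \|_1$ and $\| |L|\,|U| \|_\infty$. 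Submultiplicativity of the induced norms gives $\| |L|\,|U| \|_\eta\le \| |L| \|_\eta\,\| |U| \|_\eta$ for $\eta=1,\infty$, and top-heaviness (Definition~\ref{topheavy}) makes every subdiagonal entry of $L$ lie in $[-1,1]$, so $\| |L| \|_1=\|L\|_1\le n$ and $\| |L| \|_\infty=\|L\|_\infty\le n$. It therefore remains only to bound $\|U\|_1$ and $\|U\|_\infty$.

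For this, the key observation is that, under a top-heavy strategy, every entry of $U$ is an entry of a fully pivoted Schur complement: $U(i,j)=S_i^{\Pi_c}(i,j)$ for $i\le j$, since row $i$ of $U$ is frozen after stage $i$ and equals the leading row of $S_i^{\Pi_c}$. Consequently
\[ |U(i,j)| = \big| S_i^{\Pi_c}(i,j) \big| \le \big\| S_i^{\Pi_c}(:,j) \big\|_\infty \le \big\| S_i^{\Pi_c}(:,j) \big\|_2 \le \rho_{col}({\bf A})\,\|{\bf A}\|_{1,2}, \]
using Lemma~\ref{inf_to_2} for the middle step and the definition of $\rho_{col}$ together with permutation-invariance of $\|\cdot\|_2$ for the last. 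Since each row and each column of $U$ has at most $n$ nonzero entries, this yields $\|U\|_\infty=\max_i\|U(i,:)\|_1\le n\rho_{col}({\bf A})\|{\bf A}\|_{1,2}$ and $\|U\|_1=\max_j\|U(:,j)\|_1\le n\rho_{col}({\bf A})\|{\bf A}\|_{1,2}$, in exact parallel with the estimate $\|U\|_\eta\le n\rho_{elem}({\bf A})\|{\bf A}\|_{1,\infty}$ used to prove Theorem~\ref{elemback}.

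Putting the pieces together, $\| |L|\,|U| \|_\eta\le n\cdot n\rho_{col}({\bf A})\|{\bf A}\|_{1,2}=n^2\rho_{col}({\bf A})\|{\bf A}\|_{1,2}$ for $\eta=1,\infty$, and the entrywise-to-$p$-norm reduction from the first paragraph then delivers $\|\delta{\bf A}\|_p\le\frac{3n\epsilon_{mach}}{1-3n\epsilon_{mach}}n^2\rho_{col}({\bf A})\|{\bf A}\|_{1,2}$ and $\|E\|_p\le\frac{n\epsilon_{mach}}{1-n\epsilon_{mach}}n^2\rho_{col}({\bf A})\|{\bf A}\|_{1,2}$ for every $1\le p\le\infty$. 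The main things to be careful about are bookkeeping: verifying $U(i,j)=S_i^{\Pi_c}(i,j)$ for a top-heavy pivoting strategy, which is exactly why the fully pivoted notation $S_k^{\Pi_c}$ was introduced, and applying Theorem~\ref{2_from_inf_and_1} to the nonnegative matrices $|L|$, $|U|$, $|L|\,|U|$, $|\delta{\bf A}|$, $|E|$ rather than to $L,U,LU,\delta{\bf A},E$, since $\| |B| \|_p$ and $\|B\|_p$ need not coincide for $1<p<\infty$ even though they agree at $p=1,\infty$. Beyond this I expect no real obstacle: the proof is a routine transcription of the element-growth argument in which Lemma~\ref{inf_to_2} is invoked whenever an $\ell_\infty$ quantity is traded for an $\ell_2$ one.
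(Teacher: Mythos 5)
Your proposal is correct and follows essentially the same route as the paper: both proofs reduce to bounding $\|L\|_{1},\|L\|_{\infty}\le n$ via top-heaviness and $\|U\|_{1},\|U\|_{\infty}\le n\max_m\|S_m\|_{1,2}=n\,\rho_{col}({\bf A})\|{\bf A}\|_{1,2}$ via the observation that every entry of $U$ lives in some Schur complement and is dominated by that column's $\ell_2$ norm, then interpolate with Theorem~\ref{2_from_inf_and_1} and plug into the entrywise bounds \eqref{deltaAform}. The only cosmetic difference is that the paper bounds $\|U\|_{\infty}$ by passing through $\|U(m,:)\|_1\le\sqrt{n}\,\|S_m(\beta_m,:)\|_2\le\sqrt{n}\,\|S_m\|_F$ rather than your uniform entrywise estimate, but both yield the identical constant.
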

\begin{proof}
First, Lemma \ref{lfactorbound} gives use the desired bound on $\|L\|_{p}$. Next, please note that by the definition of the LU algorithm, we have $U(m,m:n)=S_m(\beta_m,\pi_{(m,\alpha_m)}(m:n))$. Next, we tackle the $U$ matrix will $\rho_{col}$.
\begin{align*}
\left\| U \right\|_{\infty} &= \max_{1 \leq m \leq n} \left\| U(m,:) \right\|_1 
\leq \sqrt{n} \max_{1 \leq m \leq n} \left\| U(m,:) \right\|_2 
= \sqrt{n} \max_{1 \leq m \leq n} \left\|  S_m(\beta_m,:) \right\|_2 \\
&= \sqrt{n} \max_{1 \leq m \leq n} \left\| S_m \right\|_F 
\leq n \max_{1 \leq m \leq n} \max_{m \leq j \leq n} \left\| S_m(:,j) \right\|_2 
= n \max_{1 \leq m \leq n} \left\| S_m \right\|_{1,2}
\end{align*}
and 
\begin{align*}
\left\| U \right\|_{1} &= \max_{1 \leq j \leq n} \left\| U(:,j) \right\|_1 
\leq n \max_{1 \leq j \leq n} \left\| U(:,j) \right\|_{\infty} 
= n \max_{1 \leq j \leq n} \max_{1 \leq m \leq j} \left| U(m,j) \right| \\
&= n \max_{1 \leq m \leq n} \max_{m \leq j \leq n} \left| S_m(m,j) \right| 
\leq n \max_{1 \leq m \leq n} \max_{m \leq j \leq n} \left\| S_m(:,j) \right\|_2 
= n \max_{1 \leq m \leq n} \left\| S_m \right\|_{1,2}
\end{align*}
Thus, Theorem \ref{2_from_inf_and_1} gives us
\begin{align*}
\left\| U \right\|_p &\leq \left\| U \right\|_1^{\frac{1}{p}} \left\| U \right\|_{\infty}^{1-\frac{1}{p}} \leq n \max_{1 \leq m \leq n} \left\| S_m \right\|_{1,2}
\end{align*}
for all $1\leq p\leq \infty$. Note that
\[ \max_{1 \leq m \leq n} \left\| S_m \right\|_{1,2} = \frac{\max_{1 \leq m \leq n} \left\| S_m \right\|_{1,2}}{\left\|{\bf A}\right\|_{1,2}} \left\|{\bf A}\right\|_{1,2} = \rho_{col} \left({\bf A}\right) \left\|{\bf A}\right\|_{1,2} \]
We arrive at our conclusion by combining this with equation (\ref{deltaAform}).
\end{proof}
\subsection{Popular Pivoting Strategies}
\subsubsection{No Pivoting or Static Pivoting}
The easiest and most computationally efficient strategy is not to pivot at all, which we call \emph{Gaussian elimination with no pivoting} (GENP). However, this method is not numerically stable for a general ${\bf A}  \in \mathbb{R}^{n\times n}$. To demonstrate this, we use an example from \cite{highambk}
\[{\bf A}= \left[ \begin{array}{cc} \delta & -1 \\ 1 & 1 \end{array} \right] = \left[ \begin{array}{cc} 1 & 0 \\ \delta^{-1} & 1 \end{array} \right] \left[ \begin{array}{cc} \delta & -1 \\ 0 & 1+\delta^{-1} \end{array} \right] = L U \]
where $\delta < \epsilon_{mach}$ and $\epsilon_{mach}$ is machine epsilon. However, in floating point arithmetic, we have $fl \left(1+\delta^{-1} \right) = \delta^{-1}$ where $fl(\cdot)$ represents evaluation in floating point arithmetic. Thus,
\[fl\left( L U \right) = fl\left( L \right) fl\left( U \right) =\left[ \begin{array}{cc} 1 & 0 \\ \delta^{-1} & 1 \end{array} \right] \left[ \begin{array}{cc} \delta & -1 \\ 0 & \delta^{-1} \end{array} \right] = \left[ \begin{array}{cc} \delta & -1 \\ 1 & 0 \end{array} \right] \neq{\bf A}\]
which is the wrong computed LU factorization with backward error $\left\|{\bf A}- fl(L) fl(U) \right\|_{\infty} = 1$. On the other hand, if ${\bf A}  \in \mathbb{R}^{n \times n}$ is
\begin{enumerate}
\item[$\bullet$] Totally nonnegative, i.e. the determinant of every square submatrix is nonnegative,
\item[$\bullet$] Row or column diagonally dominant,
\item[$\bullet$] Symmetric positive definite,
\end{enumerate} 
then it is proved in \cite{highambk} that \emph{no pivoting} is required for a stable computation. In fact, all of these matrices have element growth $\rho_n = O(1)$. 
\subsubsection{Partial Pivoting} \label{sec:gepp}
The most common version of LU is \emph{Gaussian elimination with partial pivoting} (GEPP) because it provides some stability at relatively cheap overhead. It is backward stable ``in practice'' \cite{demmelbk}, meaning that this method provides a stable LU factorization for most but not all matrices. For partial pivoting, please place the following pivoting rule in Algorithm $1$: At the $k^{th}$ stage, the $k^{th}$ row is swapped with the $\beta_k^{th}$ row, where
\[\beta_k = \argmax_{k \leq i \leq n} \left| S_k(i,k) \right| \]
This method requires a number of entry comparisons in addition to the floating point operations required by Gaussian elimination without pivoting. Specifically, it requires 
\[ \sum_{k=1}^n (n-k) = \frac{n(n-1)}{2} \]
comparisons in total, which is one order less asymptotically than the $\frac{1}{3}n^3 + O(n^2)$ flops in GENP. Each row swap requires $n$ individual entry swaps, so the total number of entry swaps required is bounded above by
\[ \sum_{k=1}^n n = n^2\]
Since data movement is typically far more expensive than comparisons per operation, the dominant overhead cost is typically the data movement involved with swapping. The element growth for partial pivoting is bounded by
\[ \rho_{elem}^{gepp} \left({\bf A}\right) \leq  2^{n-1} \]
It is also easy to show that the column growth for partial pivoting is bounded by
\[ \rho_{col}^{gepp} \left({\bf A}\right) \leq \frac{1}{\sqrt{n}} 2^{n-1} \]
Both of these bounds are attained by the Wilkinson matrix
\[ \left[ \begin{array} {cccccc} 1 & 0 & 0 & \cdots & 0 & 1 \\ -1 & 1 & 0 & \cdots & 0 & 1 \\ -1 & -1 & 1 & \cdots & 0 & 1 \\ \vdots & \vdots & \vdots & \ddots & \vdots & \vdots \\ -1 & -1 & -1 & \cdots & 1 & 1 \\ -1 & -1 & -1 & \cdots & -1 & 1 \end{array} \right] = \left[ \begin{array} {cccccc} 1 & 0 & 0 & \cdots & 0 & 0 \\ -1 & 1 & 0 & \cdots & 0 & 0 \\ -1 & -1 & 1 & \cdots & 0 & 0 \\ \vdots & \vdots & \vdots & \ddots & \vdots & \vdots \\ -1 & -1 & -1 & \cdots & 1 & 0 \\ -1 & -1 & -1 & \cdots & -1 & 1 \end{array} \right] \left[ \begin{array} {cccccc} 1 & 0 & 0 & \cdots & 0 & 1 \\ 0 & 1 & 0 & \cdots & 0 & 2 \\ 0 & 0 & 1 & \cdots & 0 & 2^2 \\ \vdots & \vdots & \vdots & \ddots & \vdots & \vdots \\ 0 & 0 & 0 & \cdots & 1 & 2^{n-2} \\ 0 & 0 & 0 & \cdots & 0 & 2^{n-1} \end{array} \right] = LU \]
where we see that the element growth in $U$ is $2^{n-1}$ and the column growth $\frac{\| 2^{n-1} \|_{\ell_2\left( \mathbb{R}^1 \right)}}{\| {\bf e} \|_{\ell_2\left( \mathbb{R}^n \right)}} = \frac{2^{n-1}}{\sqrt{n}}$. 

Large element growth and unstable LU factorizations with partial pivoting also occur in many applications. Wright \cite{wrightfail} describes a family of two-point boundary value problems that cause GEPP to fail via exponential element growth when attempting to solve the ODE by discretizing it into matrix form. Liu and Russell \cite{liu1993linear} experience the same phenomenon when attempting to solve the discretized Kuramoto-Sivashinsky PDE, which is used to model laminar flame front propagation, phase dynamics in reaction-diffusion systems, fluctuations in fluid films and instabilities in plasma physics \cite{HNks86, laqueynonlinear}. Foster \cite{FOSTER4} applies the Newton-Cotes quadrature  to discretize the Volterra Integral equation from many areas of applied mathematics including actuarial science, viscoelastic materials and probability theory. This reduces the Volterra Integral equation into a matrix equation that makes GEPP fail. As we will discuss later, the Volterra example is among the most diabolical examples that break GEPP because it induces \emph{passive aggressive element growth}, i.e. barely enough element growth to cause GEPP to fail. 

The remainder of the partial pivoting section is spent discussing the \emph{generalized Wilkinson Matrix}, which is a more general class of matrices that can cause exponential element growth in GEPP.  
\begin{EXAMPLE}[Generalized Wilkinson Matrices $\mathcal{GW}$] For any integer $r \geq 1$, consider a matrix ${\bf A}$ of the following form
\[{\bf A}= L + \left(\begin{array}{c}
1  \\
\vdots \cr
1 \cr
0 \end{array}\right) \left(\begin{array}{cccc}
0  &
\dots & 
0 &
1 \end{array}\right) , \]
where $L$ is a lower triangular  matrix with 
\[L_{i,i} = 1, \quad \mbox{and} \quad L_{i,j} = - u_i^T W_{i+1} \cdots W_{j-1} v_j, \quad \mbox{for any} \quad i > j, \]
with $u_i, v_j \in \mathbb{R}^r$ being any vectors and $W_i \in \mathbb{R}^{r \times r}$ being square matrices.
The matrix ${\bf A}$ reduces to the Wilkinson matrix for the special case $r=1$, $u_i=v_j=W_i = 1$ for all $i$ and $j$. It is straightforward to verify that $L^{-1}$ is a lower triangular  matrix with 
\[\left(L^{-1}\right)_{i,i} = 1, \quad \mbox{and} \quad \left(L^{-1}\right)_{i,j} = u_i^T \widehat{W}_{i+1} \cdots \widehat{W}_{j-1} v_j, \quad \mbox{for any} \quad i > j, \]
where $\widehat{W}_{i} = W_i + v_i u_i^T$. Now we choose the vectors $\{u_i\}_{i=2}^n$, $\{v_j\}_{j=1}^{n-1}$ and matrices $\{W_i\}_{i=2}^{n-1}$ to contain only positive entries and have 2-norm at most $1$.  This implies that 
\[ |L_{i,j}| = |u_i^T W_{i+1} \cdots W_{j-1} v_j| \leq 1 \quad \mbox{for any} \quad i > j. \]

Consequently LU-factorizing ${\bf A}$ with GEPP will incur no row exchanges, and the resulting matrix factorization has the form
\[{\bf A}= L \; U, \quad \mbox{where} \quad U = I + \left(L^{-1} \left(\begin{array}{c}
1  \\
\vdots \cr
1 \cr
0 \end{array}\right)\right)  \left(\begin{array}{cccc}
0  &
\cdots & 
0 &
1 \end{array}\right). \]
This typically implies exponential element growth in $U$ if the inequality $\|\widehat{W}_{i}\|_2 > 1$ holds for most matrices $\widehat{W}_{i}$.
\end{EXAMPLE}

In our numerical experiments, we use this to create a random matrix ensemble that causes GEPP to fail with high probability.

\subsubsection{Complete Pivoting}
The most reliable version is \emph{Gaussian elimination with complete pivoting} (GECP). Von Neumann and Goldstine \cite{von1947numerical} referred to this as the ``customary procedure,'' making it arguably the first published pivoting strategy for Gaussian elimination in large matrix computations.
For complete pivoting, please place the following pivoting rule in Algorithm $1$: At $k^{th}$ stage, $k^{th}$ row and $k^{th}$ column are swapped with $\beta_k^{th}$ row and $\alpha_k^{th}$ column respectively , where
\[ \left( \beta_k, \alpha_k \right) = \argmax_{\substack{(i,j) \in \mathbb{N}^2 \\ k \leq i,j \leq n}} \left| S_k(i,j) \right| \]
Next, we consider the overhead of complete pivoting, which is broken into entry comparisons and data movement. The number of overall entry comparisons is 
\[ \sum_{k=1}^{n} \left(k^2 - 1 \right) = \frac{n(n+1)(2n+1)}{6} - n  = \frac{1}{3}n^3 + \frac{1}{2}n^2 - \frac{5}{6}n\]
In the worst case, the number of entry swaps (or data movement) is 
\[ \sum_{k=1}^n 2n = 2n^2 \]
Therefore, despite the fact that each entry swap is more expensive on modern computers than each entry comparison, the entry comparisons will form the bulk of the overhead when $n$ is large. This is different than partial pivoting because the total number of comparisons in complete pivoting is $O(n^3)$ instead of the $O(n^2)$ comparisons in partial pivoting. In his seminal work, Wilkinson \cite{wilk1} proves that the element growth in complete pivoting is bounded above by
\[ \rho_{elem}^{gecp}({\bf A}) \leq \sqrt{n} \left(2 \cdot 3^{\frac{1}{2}} \cdots n^{\frac{1}{n-1}} \right)^{1/2} \sim c n^{1/2} n^{\frac{1}{4} \ln(n)} \] 
Our proof of Theorem \ref{gercpgf} can be easily adapted to show that the column growth for complete pivoting is bounded above by 
\[ \rho_{col}^{gecp}(A) \leq \left(2 \cdot 3^{\frac{1}{2}} \cdots n^{\frac{1}{n-1}} \right)^{1/2} \sim c n^{\frac{1}{4} \ln(n)} \]
These bounds are provably unattainable for $n \geq 3$ because of their proof's reliance on Hadamard's inequality of Theorem \ref{hadamard}. It was incorrectly conjectured that $\rho_{elem}^{gecp}({\bf A}) \leq n $ \cite{edelman1992complete,gould1991growth}. Nonetheless, it is widely believed that the above element growth bound is wildly pessimistic. However, this bound proves that exponential element growth is impossible as $n^{\log(n)}$ is sub-exponential. Because of this, we call GECP \emph{backwards stable}.  
\subsubsection{Rook Pivoting} \label{subsecrook}
An important attempt to speed up complete pivoting was introduced by Neal and Poole \cite{geogauss2} as \emph{Gaussian elimination with rook pivoting} (GERP). Basically, one alternates between partial pivoting on the rows and the columns until arriving at a Nash-equilibrium of sorts. For rook pivoting, place the following pivoting rule in Algorithm $1$: At $k^{th}$ stage, initialize $\beta_k=k$ and $\alpha_k=k$. First, choose a new $\beta_k$ from (\ref{rookstage1}) while holding $\alpha_k$ constant, and then choose $\alpha_k$ from (\ref{rookstage2}) while holding $\beta_k$ constant. Repeat (\ref{rookstage1}) and (\ref{rookstage2}) until the current choice $(\beta_k,\alpha_k)$ make (\ref{rookstage1}) and (\ref{rookstage2}) hold simultaneously.
\begin{align}
\beta_k &= \argmax_{k \leq i \leq n} \left| S_k \left( i, \alpha_k \right) \right| \label{rookstage1} \\
\alpha_k &= \argmax_{k \leq j \leq n} \left| S_k \left( \beta_k, j \right) \right| \label{rookstage2} 
\end{align}
The number of entry comparisons required by rook pivoting depends on the matrix. For example, a diagonally dominant matrix will require no swaps, which causes rook pivoting to only check (\ref{rookstage1}) once. This example gives $\sum_{k=1}^n (n-k) = \frac{n(n-1)}{2} = O(n^2)$ entry comparisons just like partial pivoting. In fact, there are a few probabilistic arguments \cite{foster3,rookpoole} that claim for the ``average'' input matrix ${\bf A}$, the user would expect to perform $O(n^2)$ entry comparisons in total. However, the worst case needs a total of $O(n^3)$ entry comparisons (just as bad as complete pivoting) as exemplified by any matrix of the form \cite{highambk}
\begin{align*}
\left( \begin{array}{ccccc} \theta_1 & \theta_2 & & & \\ & \theta_3 & \theta_4 & & \\ & & \ddots & \ddots & \\ & & & \theta_{2n-3} & \theta_{2n-2} \\ & & & & \theta_{2n-1} \end{array} \right), \hspace{2cm} \left| \theta_1 \right| < \left| \theta_2 \right| < \cdots < \left| \theta_{2n-1} \right|
\end{align*}
The worst case data movement in terms of entry-wise swaps is the same as GECP at $O(n^2)$ because both rows and columns are also being swapped here. It is not clear whether comparisons or swaps are to be considered the dominant overhead cost as it will depend on whether the input matrix requires $O(n^3)$ or $O(n^2)$ comparisons. Foster \cite{foster3} proves that rook pivoting element growth must obey 
\[ \rho^{gerp}_{elem}({\bf A}) \leq \frac{3}{2} n^{\frac{3}{4} \ln(n)} \] 
and he also shows that the bound is unattainable for $n\geq 3$. This suggests that this method has similar stability properties to complete pivoting and should be considered as a less expensive ``cousin.''
\subsubsection{Prior attempts at randomizing Gaussian elimination} \label{subsecprirand}
Randomization in the context of Gaussian elimination based direct solvers has been attempted in the past as a way to avoid pivoting altogether \cite{pan2013randomized, parker1995random}. These methods pre/post multiply our input matrix ${\bf A}$ by random matrices before applying Gaussian elimination without pivoting. This can either be used to solve a linear system or compute the inverse of ${\bf A}$. Unfortunately, there are no theoretical guarantees of small backwards error. In fact, many examples, including the Turing matrix, cause these methods to out-right fail in terms of backwards error as shown in the numerical experiments section of \cite{parker1995random}. 

\section{Algorithm GERCP and Main Results}\label{Sec:GERCP}

In this section, we first introduce Algorithm \ref{Alg:ge2cp}, a deterministic complete pivoting scheme based on the $\ell_2$ norm;  we then evolve Algorithm \ref{Alg:ge2cp} into GERCP by significantly speeding it up with randomization.

\subsection{A Deterministic $\ell_2$-norm Complete Pivoting Strategy} \label{subsectl2piv}
To help motivate our randomized strategy, we propose an intermediate deterministic strategy called \emph{Gaussian elimination with $\ell_2$ complete pivoting} (GE2CP).

\begin{algorithm}\label{Alg:ge2cp} {\bf Gaussian Elimination with $\ell_2$ norm Complete Pivoting (GE2CP)} \\
\headerrule

\begin{tabular}{ll}
{\bf Input:} & $n \times n$ matrix ${\bf A}$ \\
{\bf Output:} & lower triangular $L$, upper triangular $U$, row permutation $\Pi_r$, column permutation $\Pi_c$.  \\
\end{tabular}

\noindent\headerrule

\begin{INDENT}
{\noindent \bf for} $k = 1, \cdots, n - 1$ {\bf do} \\
\begin{INDENT}
\begin{enumerate}
\item {\bf compute}  ${\displaystyle \alpha = {\bf argmax}_{k \leq j \leq n} \left\| A(k:n,j) \right\|_2}$. \\
 {\bf swap} columns $k$ and $\alpha$ of $A$.
\item {\bf compute} ${\displaystyle \beta = {\bf argmax}_{k \leq j \leq n} \left| A(j,k) \right|}$. \\
{\bf swap} rows $k$ and $\beta$ of $A$.
\item {\bf compute} $A(k+1:n,k) = A(k+1:n,k)/A(k,k)$;
\item {\bf compute} $A(k+1:n,k+1:n) = A(k+1:n,k+1:n) - A(k+1:n,k) * A(k,k+1:n) $;
\end{enumerate} \\
\end{INDENT}
\end{INDENT}
\headerrule
\end{algorithm}

Pivoting is done in two steps in Algorithm \ref{Alg:ge2cp} for each $k$: Step 1 swaps the $\ell^{th}$ column of the trailing matrix $A(k:n,k:n)$ with the $k^{th}$, where $A(k:n,\alpha)$ has the largest column $2$-norm among all columns of $A(k:n,k:n)$; whereas Step 2 swaps the $\beta^{th}$ row of $A(k:n,k:n)$ with the $k^{th}$, where $A(\beta,k)$ is the largest in absolute value among all entries of $A(k:n,k)$. Step 1 controls potentially harmful column norm growth in ${\bf A}$ through column interchanges, and Step 2 performs standard partial pivoting to control potentially harmful element growth in the $k$-column $A(k:n,k)$ through row interchanges. Step 2 makes the $\ell_2$ norm Complete Pivoting strategy (GE2CP) in Algorithm \ref{Alg:ge2cp} a \emph{top-heavy} pivoting strategy, which allows us to apply Theorems \ref{colback} to bound the LU backward error.

In this method, the number of comparisons is reduced in favor of additional floating point operations. The total amount of comparisons for this method is
\[ \sum_{k=1}^n 2(n-k) = n(n-1) = O(n^2) .\]
The total additional floating point operations required to directly compute the $2$-norms in Step 1 of Algorithm \ref{Alg:ge2cp} is about 
\[ \sum_{k=1}^{n-1} 2(n-k+1)(n-k+1) = \frac{n(n+1)(2n+1)}{3} -2 = O(n^3).\]
The worst case for entry swaps in GE2CP is the same as in GECP at $O(n^2)$. The $\ell_2$ norm complete pivoting strategy (GE2CP) in Algorithm \ref{Alg:ge2cp} enjoys similar element/column growth bounds to complete pivoting and rook pivoting. By omitting references and applications of Lemma \ref{Lem:WellPreservedColNrm} from the proof of Theorem \ref{gercpgf}, one can easily show that
\[ \rho^{ge2cp}_{col} \leq n \sqrt{e(n+1)} n^{\frac{1}{2} \ln \left( n \right)} \]
The $O(n^3)$ flop overhead makes Algorithm \ref{Alg:ge2cp} an impractical alternative to GEPP. In Section \ref{SubSec:GERCP}, we develop GERCP by randomizing Algorithm \ref{Alg:ge2cp} to choose columns with sufficiently large column norms at significantly lowered overhead costs. Furthermore, we will derive a GECP style element growth upper bound for GERCP that holds with arbitrarily high probability.

\subsection{Gaussian Elimination with Randomized Complete Pivoting (GERCP)}\label{SubSec:GERCP} 
Our randomized Gaussian Elimination algorithm, GERCP, is based, in principle, on  Algorithm \ref{Alg:ge2cp}. However, the key difference is we will replace the column pivoting step, Step 1, by a randomized alternative to significantly reduce its cost.  At its core, GERCP relies on a fast random project scheme to reliably estimate the column norms of {\em each} observed Schur complement required by line $1$ of Algorithm \ref{Alg:ge2cp}, based on Theorem \ref{randproj},

\subsubsection{Successive Schur Sketching} 
We adapt the idea of a \emph{sketching matrix} \cite{woodruff2014sketching} in this section to speed up the column selection procedure in GE2CP Algorithm \ref{Alg:ge2cp}. Let our sampling matrix $\Omega \in \mathbb{R}^{r \times n}$ be a random matrix with iid standard normal $\mathcal{N}(0,1)$ entries where $r \ll n$, and let $\Psi = \Omega A$ for our first sketching matrix. We refer to the number $r \in \mathbb{N}$ as the {\bf sampling dimension}. One can also make use of a Fast Johnson-Lindenstrauss-like sampling matrix \ref{ailon2009fast} as a sampling matrix. However, we will stick with a Gaussian sampling matrix in this paper for simplicity of presentation. By Theorem \ref{randproj}, the column norms of $A$ can be reliably estimated via those of $\Psi$ for a large enough choice of $r$. In other words, we can perform Step 1 of Algorithm \ref{Alg:ge2cp} for $k = 1$ by looking for the column with the largest column norm in $\Psi$. 

However, Step 1 of Algorithm \ref{Alg:ge2cp} must be performed for every value of $k$. From the single random matrix $\Omega$, below we will construct a collection of matrices $\left\{ \Psi_k \right\}_{k=1}^n$ known as the {\bf Schur sketching matrices}, where $\Psi_k \in \mathbb{R}^{r\times (n-k+1)}$. These matrices are constructed as
\begin{equation}\label{Eqn:SSS}
\Psi_k \stackrel{def}{=} \Omega\left(:,\pi_{r,k}(k:n)\right) \; S_k \text{, for all $1< k \leq n$.}
\end{equation}

\begin{remark} \label{randomschur}
For randomized complete pivoting, the choice of the $k^{th}$ pivot column will be based on column norms in the Schur sketching matrix $\Psi_k$. Since all Schur sketching matrices are based on the same random matrix $\Omega$, the Schur complement $S_k \in \mathbb{R}^{(k:n)\times (k:n)}$ for $1 < k \leq n$ will {\bf not} be a deterministic matrix. Indeed, the observed Schur complement $S_k$ is determined by the randomized column pivoting decisions $\alpha_1, \cdots, \alpha_{k-1}$ of GERCP from the previous stages. Given that there are only a finite number of pivot decisions, we conclude that $S_k$ must be a {discrete random variable}.
\end{remark}

To efficiently continue with all other column pivoting work in Algorithm \ref{Alg:ge2cp}, we inductively devise a scheme to use our current Schur sketching matrix $\Psi$ to produce our next Schur sketching matrix $\widehat{\Psi}$. Suppose that we have chosen $p>0$ rows/columns from the remaining $m$ rows/columns to pivot to the front of our working matrix
\[ {\displaystyle \left(\begin{array}{cc}
A_{11} &  A_{12}\\
 A_{21} &  A_{22} 
\end{array} \right) = 
\left(\begin{array}{cc}
L_{11} & \\
 L_{21} &  I  
\end{array} \right) \left(\begin{array}{cc}
U_{11} &  U_{12}\\
  &  \widehat{A}_{22} 
\end{array} \right) , } \]
where $A_{11}, L_{11}, U_{11} \in \mathbb{R}^{p\times p}$, with $L_{11}$ lower triangular and $U_{11}$ upper triangular, respectively; $A_{21}, L_{21} \in \mathbb{R}^{(m-p)\times p}$; $A_{12}, U_{12} \in \mathbb{R}^{p \times (m-p)}$; and $A_{22}, \widehat{A}_{22} \in \mathbb{R}^{(m-p)\times (m-p)}$ with 
\[ \widehat{A}_{22} = A_{22} - A_{21} A_{11}^{-1} A_{12} = A_{22} - L_{21} U_{12} \]
being the Schur complement. We call $p>0$ the {\bf pivot-block size}, namely the number of row/column pivots performed before each update to the Schur sketching matrix $\Psi$.  With the notation established above, we have $\widehat{A}_{22}=S_{n-m+p+1}$.

Step 1 of Algorithm \ref{Alg:ge2cp} requires that we perform column pivoting on $\widehat{A}_{22}$. To do this work, we need to multiply $\widehat{A}_{22}$ by a random matrix. Instead of generating a new random matrix, we introduce a simple and efficient procedure, {\bf Successive Schur Sketching (SSS)}. We partition $\Omega$ and $\Psi$ accordingly as ${\displaystyle  \Omega = \left(\begin{array}{cc}
\Omega_{P} &  \Omega_{R}
\end{array} \right)}$ and 
\begin{eqnarray*}
{\displaystyle \Psi} &= & {\displaystyle \left(\begin{array}{cc}
\Psi_{P} &  \Psi_{R}
\end{array} \right) = \left(\begin{array}{cc}
\Omega_{P} &  \Omega_{R}
\end{array} \right) \left(\begin{array}{cc}
A_{11} &  A_{12}\\
 A_{21} &  A_{22} 
\end{array} \right) } \\
& = &  {\displaystyle \left(\begin{array}{cc} 
\Omega_{P} A_{11}+ \Omega_{R} A_{21} & \Omega_{P} A_{12}+ \Omega_{R} A_{22}\end{array} \right). } 
\end{eqnarray*}

Now we compute the next Schur sketching matrix $\widehat{\Psi}$ for the Schur complement $\widehat{A}_{22}$ as
\begin{eqnarray}
{\displaystyle \widehat{\Psi}} &\stackrel{def}{=} &{\displaystyle \Omega_{R} \widehat{A}_{22}
= \Omega_{R}\left(A_{22} - L_{21} U_{12} \right)
 = \left(\Omega_{P} A_{12}+ \Omega_{R} A_{22}\right) - \left(\Omega_{P} A_{12}+ \Omega_{R} L_{21} U_{12} \right) } \nonumber\\
 & = &{\displaystyle \Psi_{R} - \left(\Omega_{P} L_{11}+ \Omega_{R} L_{21}\right) U_{12} . }\label{Eqn:PsiUpdate}
\end{eqnarray}

Thus, in SSS we use $\Omega_{R}$, a submatrix of $\Omega$, for the new sample matrix $\widehat{\Psi}$.
If done directly, it would take $2r(m-p)^2$ flops to compute $\widehat{\Psi}$ as a matrix-matrix product $\Omega_{R} \widehat{A}_{22}$. However, since $\Psi_{R}$ is part of $\Psi$ and was computed in the previous steps,  $\widehat{\Psi}$ can instead be computed via equation \eqref{Eqn:PsiUpdate} in about $4rmp$ flops, a very large savings for $p \ll m$. Indeed, this random matrix reuse will prove critically important in reducing the overall cost of computing all sample matrices by Algorithm GERCP. Later on we will further show that Algorithm GERCP will be as reliable as sampling the Schur matrices $\widehat{A}_{22}$ without random matrix reuse. 

Given $\Psi$, the new sample matrix $\widehat{\Psi}$ can be updated in about $4rpm$ flops with the above formula, which is much cheaper than the $O(rn^2)$ flops required for a direct computation. It costs another $2rm$ flops to select a column with sufficiently large column norm. If we perform a column pivot once every block elimination step, the total overhead due to column pivoting includes the computation of the initial sampling matrix, its update at every column pivot, and column selection based on the column norms of the updated sample matrix. These costs add up to 
\[ 4 r n^2 + \sum_{k=1}^{n/p} \left(4rp + 2r\right) \left(n-(k-1)p\right) \approx \left(6r + r/p\right) n^2 \]
flops, which is much smaller than the $O(n^3)$ additional flops required by GE2CP. One can preform a floating point error analysis to show that successively updating $\Psi$ at each stage via equation \eqref{Eqn:PsiUpdate} as follows.
\begin{lemma} Suppose equation \eqref{Eqn:PsiUpdate} is continuously used to produce each sketching matrix $\Psi_k$. Then, each sketching matrix in floating point arithmetic is close to the corresponding sketching matrix in real arithmetic as given by
\begin{align*}
\big| fl&\big( \widehat{\Psi}_{k+1} \big) - \widehat{\Psi}_{k+1} \big| \\ 
&\leq \epsilon_{mach} \left(1+ \epsilon_{mach} \right)^k \left(\sum_{i=1}^k \left| \Psi_{i} \left( :, (k+1)p:n \right) \right| + 5 \left| \Omega \right| \left| L_{(k+1)p} \right| \left| U_{(k+1)p}(:,(k+1)p:n) \right|\right) 
\end{align*}
\end{lemma}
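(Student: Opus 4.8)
The plan is a recursive rounding-error analysis in the style of Higham (Chapters~3 and~9 of \cite{highambk}), carried out entrywise and organized around the update recurrence \eqref{Eqn:PsiUpdate}. Let $\widehat{\Psi}_{k+1}$ denote the exact, real-arithmetic sketch obtained after $k$ block elimination steps, and let $fl(\widehat{\Psi}_{k+1})$ be the matrix actually produced by applying \eqref{Eqn:PsiUpdate} in floating point at every step, starting from the initial sketch $\Psi_1 = \Omega{\bf A}$. First I would record a single-step perturbation bound: writing $M=fl(\Omega_P L_{11}+\Omega_R L_{21})$, $N=fl(M\,U_{12})$ and $fl(\widehat{\Psi})=fl(fl(\Psi_R)-N)$, I would combine the standard entrywise facts $|fl(XY)-XY|\le\gamma_m\,|X|\,|Y|$ and $|fl(X-Y)-(X-Y)|\le\epsilon_{mach}(|X|+|Y|)$ with $\gamma_m=m\epsilon_{mach}/(1-m\epsilon_{mach})$, using that $\Omega_P L_{11}+\Omega_R L_{21}$ is a single conformal product of a submatrix of $\Omega$ with a block column of $L$, and that $|M|\,|U_{12}|\le(1+\epsilon_{mach})\,|\Omega|\,|L^{(k)}|\,|U_{12}|$. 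This yields a one-step bound of the form
\begin{align*}
\bigl| fl(\widehat{\Psi}_{k+1}) - \widehat{\Psi}_{k+1} \bigr|
  &\le (1+\epsilon_{mach})\,\bigl| fl(\Psi_k) - \Psi_k \bigr|(:,(k+1)p:n) \\
  &\quad {}+ \epsilon_{mach}\,|\Psi_{k+1}|
       + c\,\epsilon_{mach}\,|\Omega|\,\bigl|L^{(k)}\bigr|\,\bigl|U^{(k)}\bigr|(:,(k+1)p:n),
\end{align*}
where $L^{(k)}$ and $U^{(k)}$ are the $p$ new columns of $L$ and the $p$ new rows of $U$ produced at block step $k$, and $c$ absorbs the constantly-many $\gamma_m$ and $\epsilon_{mach}$ factors from the three arithmetic steps (this is where the numerical constant $5$ comes from, up to the usual convention of folding the dimension-dependent part of each $\gamma_m$ into $\epsilon_{mach}$). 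The two structural points that make this the right shape are that the carried-forward block $\Psi_R$ is literally a submatrix of the previously computed sketch, so its floating-point error is exactly the restriction of the stage-$k$ error, and that \eqref{Eqn:PsiUpdate} combines terms by subtraction rather than by a contraction, so the earlier error propagates essentially undiminished --- hence the $(1+\epsilon_{mach})$ amplification and not a shrinking factor.

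Second, I would unroll the recursion. Letting $E_j$ be the entrywise floating-point error of the stage-$j$ sketch restricted to the columns that survive to stage $k+1$, the one-step bound gives $E_k\le(1+\epsilon_{mach})E_{k-1}+\epsilon_{mach}|\Psi_k|+c\,\epsilon_{mach}\,|\Omega|\,|L^{(k)}|\,|U^{(k)}(:,(k+1)p:n)|$, and telescoping produces a common prefactor $(1+\epsilon_{mach})^k$ multiplying a sum over $i=1,\dots,k$ of the two error sources. The source $\epsilon_{mach}|\Psi_i|$ is already in the stated form $\sum_{i=1}^k|\Psi_i(:,(k+1)p:n)|$. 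For the $|\Omega|\,|L^{(i)}|\,|U^{(i)}|$ source I would invoke the key identity that, with $|L_{(k+1)p}|=[\,|L^{(1)}|\ \cdots\ |L^{(k)}|\,]$ and $|U_{(k+1)p}|=[\,|U^{(1)}|;\ \cdots;\ |U^{(k)}|\,]$ as conformably partitioned block matrices, block multiplication gives $\sum_{i=1}^k|L^{(i)}|\,|U^{(i)}(:,(k+1)p:n)|=|L_{(k+1)p}|\,|U_{(k+1)p}(:,(k+1)p:n)|$ exactly; this collapses what would naively be a factor-$k$ blow-up into the single clean $|L||U|$ term in the lemma.

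The main obstacle is bookkeeping discipline rather than any hard estimate. One must (a) track which columns of each intermediate sketch survive under the successive column pivots $\alpha_1,\dots,\alpha_{(k+1)p}$ so that the restriction to columns $(k+1)p:n$ is applied consistently at every stage --- the row permutation $\pi_{r,k}$ appearing in \eqref{Eqn:SSS} must also be carried along but does not affect entrywise magnitudes; (b) keep the constant uniform, bounding all the $(1-m\epsilon_{mach})^{-1}$ denominators from the various $\gamma_m$ together with the per-step $(1+\epsilon_{mach})$ amplifications by the single factor $(1+\epsilon_{mach})^k$ under the usual mild hypothesis $n\epsilon_{mach}\ll1$; and (c) justify the telescoping identity above, which hinges on the fact that in Gaussian elimination the already-computed columns of $L$ and rows of $U$ are never modified at later block steps, so $L^{(1)},\dots,L^{(k)}$ genuinely form a column partition of $L_{(k+1)p}$ (and likewise for $U$). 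Everything else reduces to the elementary rounding models and the submatrix/product monotonicity of $|\,\cdot\,|$, all of which are standard.
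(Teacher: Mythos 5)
Your proposal is correct and follows essentially the same route as the paper: a single-step entrywise rounding bound for the update \eqref{Eqn:PsiUpdate} (a product-error term of order $\epsilon_{mach}\,|\Omega|\,|L|\,|U|$ plus a subtraction-error term plus a $(1+\epsilon_{mach})$ amplification of the inherited error in $\Psi_{k,R}$), followed by unrolling the recursion. In fact you supply a detail the paper compresses into ``applying a simple induction argument,'' namely the block-multiplication identity $\sum_{i}|L^{(i)}|\,|U^{(i)}| = |L_{(k+1)p}|\,|U_{(k+1)p}|$ that collapses the accumulated product terms; the only cosmetic discrepancy is your writing $\epsilon_{mach}|\Psi_{k+1}|$ where the paper uses $\epsilon_{mach}|\Psi_{k,R}|$ in the one-step bound, which is immaterial since either choice is absorbed into the stated constants.
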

\begin{remark}
Later in the paper, we will provide a probabilistic analysis of this lemma to get rid of the randomness originating from the sampling matrix $\Omega$.
\end{remark}
\begin{proof}
\noindent A brief floating point analysis proceeds as follows
\begin{align*}
\left| fl \big( (\Omega_{P} L_{11}+ \Omega_{R} L_{21})U_{12} \big) - (\Omega_{P} L_{11}+ \Omega_{R} L_{21})U_{12} \right| &\leq 4 \epsilon_{mach} (\left|\Omega_{P}\right| \left| L_{11} \right| + \left| \Omega_{R}\right| \left| L_{21}\right|)\left|U_{12}\right| \\
&=4 \epsilon_{mach} \left( \begin{array} {cc} \left| \Omega_P \right| & \left| \Omega_R \right| \end{array} \right) \left( \begin{array} {c} \left| L_{11} \right| \\ \left| L_{21} \right| \end{array} \right) \left| U_{12} \right| 
\end{align*}
Place this together with equation \eqref{Eqn:PsiUpdate} in floating point arithmetic to arrive at
\begin{align*}
\big| fl&\big( \widehat{\Psi}_{k+1} \big) - \widehat{\Psi}_{k+1} \big| \\ 
&\leq \underbrace{\left| fl\big( \Psi_{k,R} \big) - \Psi_{k,R} \right|}_{\text{past update errors}}\left(1+ \epsilon_{mach} \right) + \epsilon_{mach} \left| \Psi_{k,R} \right| + 5 \epsilon_{mach} \left( \begin{array} {cc} \left| \Omega_P \right| & \left| \Omega_R \right| \end{array} \right) \left( \begin{array} {c} \left| L_{11} \right| \\ \left| L_{21} \right| \end{array} \right) \left| U_{12} \right| 
\end{align*}
Applying simple induction argument to the inequality above gives the desired result.
\end{proof}
\begin{remark} This rounding error analysis applies equally well to the $L$ and $U$ factors computed in finite precision arithmetic. We did not make the distinction to avoid introducing yet more notation.
\end{remark}
When $U_{11}$ is well-conditioned, $\widehat{\Psi}$ can be updated by the more efficient formula
\begin{align} 
{\displaystyle \widehat{\Psi}} &\stackrel{def}{=} {\displaystyle \Omega_{R} \widehat{A}_{22}
= \Omega_{R}\left(A_{22} - L_{21} U_{12} \right)
 = \left(\Omega_{P} A_{12}+ \Omega_{R} A_{22}\right) - \left(\Omega_{P} A_{11}+ \Omega_{R} A_{21}\right) A_{11}^{-1}A_{12}  } \nonumber \\
 & = {\displaystyle \Psi_{R} - \left(\Psi_{P}U_{11}^{-1}\right)U_{12} , } \label{Eqn:PsiUpdateFast}
\end{align}
which costs about $2rpm + 2rm$ flops. 

\subsubsection{Column pivot quality and column growth factor for GERCP}
We present classical Gaussian elimination with randomized complete pivoting (GERCP) below as Algorithm \ref{Alg:gercp}. Provided that the sampling dimension $r$ is large enough, in this section, we show that, with high probability, each GERCP pivot column has an $\ell_2$ length within a constant factor of the largest column, i.e. the GE2CP pivot column. Using this property, we then prove a sub exponential upper bound on the column growth factor for GERCP that holds with probability not less than $1-\delta$ for any user-defined quantity $\delta >0$. For the sake of simplicity and theoretical guarantees, we will focus on the case with pivot-block size $p=1$ for the rest of the paper.
\begin{algorithm}\label{Alg:gercp}{\bf Gaussian Elimination with Randomized Complete Pivoting} \\
\headerrule

\begin{tabular}{ll}
{\bf Input:} & $n \times n$ matrix $A$, sampling dimension $r>0$ \\
{\bf Output:} & lower triangular $L$, upper triangular $U$, row permutation $\Pi_r$, column permutation $\Pi_c$.  \\
\end{tabular}

\noindent\headerrule \\
{\bf sample} $\Omega(i,j) \sim \mathcal{N}(0,1)$ for all $1\leq i\leq r$ and $1\leq j\leq n$ \\
{\bf compute} $\Psi = \Omega A$ \\
{\noindent \bf for} $k = 1, \cdots, n - 1$ {\bf do}
\begin{INDENT}
\begin{enumerate}
\item {\bf compute} ${\displaystyle \alpha = \argmax_{k \leq j \leq n} \left\| \Psi(:,j) \right\|_2}$. \\
 {\bf swap} columns $k$ and $\alpha$ of $A$ and $\Psi$.
\item {\bf compute} ${\displaystyle \beta = \argmax_{k \leq i \leq n} \left| A(i,k) \right|}$. \\
{\bf swap} rows $k$ and $\beta$ of $A$.
\item {\bf compute} $A(k+1:n,k) = A(k+1:n,k)/A(k,k)$;
\item {\bf compute} $A(k+1:n,k+1:n) = A(k+1:n,k+1:n) - A(k+1:n,k) * A(k,k+1:n) $;
\item {\bf update} $\Psi(:,k:n)$ with either \eqref{Eqn:PsiUpdate} or \eqref{Eqn:PsiUpdateFast} in real arithmetic
\end{enumerate} 
\end{INDENT}
\headerrule
\end{algorithm}

For ease of notation, we consider a scalar version of GERCP where column pivoting and row pivoting are done one column/one row at a time. The column pivots $\Pi_c$ are chosen from randomized column pivoting. The row pivots $\Pi_r$ are also random, but if we fix the column pivots $\Pi_c$ then the row pivots are deterministic and are uniquely determined by the top-heavy property (\ref{eqn:topheavy}). Let $S^{\Pi_c}_k \in \mathbb{R}^{(k:n)\times (k:n)}$ be the Schur complement after the first $k-1$ steps of column and row pivoting and Gaussian elimination, and let $\Omega^{\Pi_c}_k = \Omega(:, \pi_r(k:n)) \in \mathbb{R}^{r\times (n-k+1)}$ be the submatrix of $\Omega$ whose columns correspond to the rows of $S^{\Pi_c}_k$. We now define the following events for $1 \leq k \leq i \leq n$:
\begin{align}
\overline{{\bf C}}_{i,k}^{\Pi_c}  &=  {\displaystyle\left\{ \left\| \frac{1}{\sqrt{r}} \Omega^{\Pi_c}_k S^{\Pi_c}_k(:,i) \right\|_2 \leq  \sqrt{1+\epsilon} \left\| S^{\Pi_c}_k(:,i) \right\|_2 \right\}} \label{eqn:Cupper}\\
\underline{{\bf C}}_{i,k}^{\Pi_c}  &=  {\displaystyle\left\{ \sqrt{1-\epsilon}\left\| S^{\Pi_c}_k(:,i) \right\|_2 \leq \left\| \frac{1}{\sqrt{r}} \Omega^{\Pi_c}_k S^{\Pi_c}_k(:,i) \right\|_2 \right\}} \\
{\bf C}_{i,k}^{\Pi_c}  &=  {\displaystyle\left\{ \sqrt{1-\epsilon}\left\| S^{\Pi_c}_k(:,i) \right\|_2 \leq \left\| \frac{1}{\sqrt{r}} \Omega^{\Pi_c}_k S^{\Pi_c}_k(:,i) \right\|_2 \leq  \sqrt{1+\epsilon} \left\| S^{\Pi_c}_k(:,i) \right\|_2 \right\}} \\
&= \overline{{\bf C}}_{i,k}^{\Pi_c} \bigcap \underline{{\bf C}}_{i,k}^{\Pi_c}
\end{align}
 
By Definition \ref{Def:JLvec}, ${\bf C}_{i,k}^{\Pi_c}$ describes the event where the $i^{th}$ column of the $k^{th}$ Schur complement $S^{\Pi_c}_k$ satisfies the $\epsilon-$JL condition under random mapping $\Omega^{\Pi_c}_k $. We also define for each $k$ 
 \[ {\displaystyle  {\bf D}_{k,k}^{\Pi_c}}  =  {\displaystyle\left\{  \left\|\Omega^{\Pi_c}_k S^{\Pi_c}_k(:,k) \right\|_2 \geq g \left\| \Omega^{\Pi_c}_k S^{\Pi_c}_k \right\|_{1,2} \right\}. } \] 
which describes the event where no column is swapped under threshold randomized column pivoting at step $k$ for some fixed threshold $0 < g \geq 1$. We further define for $k < i$ and ${\displaystyle \gamma_k \stackrel{def}{=} \left\| \Omega^{\Pi_c}_k  S^{\Pi_c}_k \right\|_{1,2}}$,
\[ {\displaystyle  {\bf D}_{i,k}^{\Pi_c}}  =  {\displaystyle\left\{  \left\|\Omega^{\Pi_c}_k  S^{\Pi_c}_k(:,k) \right\|_2 < g \gamma_k, \; 
  \left\|\Omega^{\Pi_c}_k S^{\Pi_c}_k(:,j) \right\|_2 < \gamma_k, \; k < j \leq i, \, \; \left\|\Omega^{\Pi_c}_k  S^{\Pi_c}_k(:,i) \right\|_2 = \gamma_k. \right\}} \]
\noindent Thus, ${\bf D}_{i,k}^{\Pi_c}$ is the event where columns $k$ and $i$ are swapped.  Thus the event 
\begin{subequations}\label{Eqn:Events}
\begin{equation}
{\displaystyle {\cal D}^{\left(\Pi_c\right)} \stackrel{def}{=}\bigcap_{1 \leq k \leq n}  {\bf D}_{ \alpha_k,k}^{\Pi_c}} 
\end{equation}
uniquely defines the column permutation $\Pi_c$ and by extension, row permutation $\Pi_r$, and the event 
\begin{equation}
{\displaystyle {\cal C}^{\left(\Pi_c\right)} \stackrel{def}{=}\bigcap_{1 \leq k \leq n} \left(\left( \bigcap_{\substack{ k\leq j\leq n \\ j \neq \alpha_k}} \underline{{\bf C}}_{j,k}^{\Pi_c} \right) \bigcap \overline{{\bf C}}_{\alpha_k,k}^{\Pi_c}\right)}  
\end{equation}
\noindent defines the Gaussian elimination process where every column in every Schur complement satisfies the $\epsilon-$JL condition for the column permutation $\Pi_c$; and the event 

\begin{equation}
{\displaystyle {\cal C} \stackrel{def}{=}\bigcup_{\Pi_c}  \left( {\cal C}^{\left(\Pi_c\right)} \bigcap {\cal D}\left(\Pi_c\right) \right) }
\end{equation}

\noindent defines set of Gaussian elimination processes where every column in each Schur complement produced by the algorithm satisfies the $\epsilon-$JL condition during the factorization process. Note that event ${\cal C}$ describes the randomized Gaussian elimination process, whereas event ${\cal C}^{\left(\Pi_c\right)}$ describes a particular incidence of ${\cal C}$ conditional on a particular column permutation $\Pi_c$.

\end{subequations}

Since GERCP performs partial pivoting at every step of elimination, it will successfully compute an LU factorization of any given matrix. Additionally, the events ${\displaystyle {\cal D}^{\left(\Pi_c\right)}}$ over the set of permutations are mutually exclusive by definition. In other words, 
\begin{equation}\label{Eqn:GERCPSucceed}
{\displaystyle   \mathbb{P} \left( \bigcup_{\Pi_c} {\cal D}^{\left(\Pi_c\right)} \right) = \sum_{\Pi_c}  \mathbb{P} \left( {\cal D}^{\left(\Pi_c\right)} \right) = 1,}
\end{equation}
where the set union is over all possible permutations $\Pi_c$. Lemma \ref{Lem:WellPreservedColNrm} below shows that with large probability, in GERCP every column in every Schur complement satisfies the $\epsilon-$JL condition during the elimination process regardless of which permutation $\Pi_c$ is chosen. 

\begin{lemma}[Randomized norm preservation] \label{Lem:WellPreservedColNrm}
Given $\epsilon, \delta \in (0,1)$ and $g \in (0,1]$. Let $\Psi$ be defined by equation \eqref{Eqn:SSS}. Choose $  {\displaystyle r > \frac{4}{\epsilon^2-\epsilon^3} \ln \left( \frac{n(n+1)}{2\delta} \right)}. $
Then we must have 
\begin{eqnarray} \label{frac_of_max}
\left\| S_k \right\|_{1,2} \leq \frac{1}{g} \sqrt{\frac{1+\epsilon}{1-\epsilon}} \left\| S_k(:,\alpha_k) \right\|_{2}
\end{eqnarray}
for all $1\leq k \leq n$ with probability no less than $1- \delta$.  
\end{lemma}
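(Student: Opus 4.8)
The plan is to first establish the bound \eqref{frac_of_max} \emph{deterministically} on the event ${\cal C}$ of \eqref{Eqn:Events}, and then to show $\mathbb{P}({\cal C}) \ge 1-\delta$. For the deterministic part, fix a column permutation $\Pi_c$ and work on ${\cal C}^{\left(\Pi_c\right)} \cap {\cal D}^{\left(\Pi_c\right)}$; since these events are mutually exclusive over $\Pi_c$ and their union is ${\cal C}$, it suffices to verify \eqref{frac_of_max} there. On ${\cal D}^{\left(\Pi_c\right)}$ the algorithm makes exactly the pivots encoded by $\Pi_c$, so by the definition of ${\bf D}_{\alpha_k,k}^{\Pi_c}$ the column selected at stage $k$ satisfies $\|\Omega_k^{\Pi_c} S_k^{\Pi_c}(:,\alpha_k)\|_2 \ge g\,\gamma_k$ with $\gamma_k = \|\Omega_k^{\Pi_c} S_k^{\Pi_c}\|_{1,2}$. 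Let $j^\ast$ attain $\|S_k^{\Pi_c}\|_{1,2}=\|S_k^{\Pi_c}(:,j^\ast)\|_2$; column $\ell_2$-norms are permutation invariant, so $\|S_k\|_{1,2}=\|S_k^{\Pi_c}\|_{1,2}$ and the norm of the selected column equals $\|S_k^{\Pi_c}(:,\alpha_k)\|_2$. If $j^\ast=\alpha_k$ then \eqref{frac_of_max} is immediate because $\frac1g\sqrt{\tfrac{1+\epsilon}{1-\epsilon}}\ge 1$. Otherwise, chaining the lower $\epsilon$-JL bound $\underline{{\bf C}}_{j^\ast,k}^{\Pi_c}$ on column $j^\ast$, maximality of $\gamma_k$, the threshold in ${\bf D}_{\alpha_k,k}^{\Pi_c}$, and the upper $\epsilon$-JL bound $\overline{{\bf C}}_{\alpha_k,k}^{\Pi_c}$ on column $\alpha_k$ gives
\[
\sqrt{1-\epsilon}\,\| S_k^{\Pi_c}(:,j^\ast) \|_2 \le \left\| \tfrac{1}{\sqrt{r}}\,\Omega_k^{\Pi_c} S_k^{\Pi_c}(:,j^\ast) \right\|_2 \le \tfrac{1}{\sqrt{r}}\gamma_k \le \tfrac{1}{g}\left\| \tfrac{1}{\sqrt{r}}\,\Omega_k^{\Pi_c} S_k^{\Pi_c}(:,\alpha_k) \right\|_2 \le \tfrac{1}{g}\sqrt{1+\epsilon}\,\| S_k^{\Pi_c}(:,\alpha_k) \|_2 ,
\]
and dividing by $\sqrt{1-\epsilon}$ yields \eqref{frac_of_max}.

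For the probabilistic part, recall from \eqref{Eqn:GERCPSucceed} that the events $\{{\cal D}^{\left(\Pi_c\right)}\}_{\Pi_c}$ are mutually exclusive with total probability $1$. Hence the Law of Total Probability (Theorem \ref{totalprob}) gives $\mathbb{P}({\cal C}) = \sum_{\Pi_c}\mathbb{P}\!\left({\cal C}^{\left(\Pi_c\right)}\mid {\cal D}^{\left(\Pi_c\right)}\right)\mathbb{P}\!\left({\cal D}^{\left(\Pi_c\right)}\right)$, so it is enough to prove $\mathbb{P}\!\left({\cal C}^{\left(\Pi_c\right)}\mid {\cal D}^{\left(\Pi_c\right)}\right)\ge 1-\delta$ for every $\Pi_c$ with $\mathbb{P}({\cal D}^{\left(\Pi_c\right)})>0$. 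Passing to complements, ${\cal C}^{\left(\Pi_c\right)}$ is an intersection of $\sum_{k=1}^n(n-k+1)=\tfrac{n(n+1)}{2}$ one-sided $\epsilon$-JL events (the $n-k$ events $\underline{{\bf C}}_{j,k}^{\Pi_c}$ with $j\neq\alpha_k$ together with $\overline{{\bf C}}_{\alpha_k,k}^{\Pi_c}$, for each $k$), so by the Union Bound (Lemma \ref{unionb}) it suffices to show each such one-sided failure has conditional probability at most $\exp\!\big(-\tfrac{(\epsilon^2-\epsilon^3)r}{4}\big)$; the stated lower bound on $r$ forces $\exp\!\big(-\tfrac{(\epsilon^2-\epsilon^3)r}{4}\big)<\tfrac{2\delta}{n(n+1)}$, so the union bound sums to strictly less than $\tfrac{n(n+1)}{2}\cdot\tfrac{2\delta}{n(n+1)}=\delta$. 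The per-event bound is exactly Theorem \ref{randproj}: for a fixed $\Pi_c$, $S_k^{\Pi_c}$ is a deterministic matrix and $\Omega_k^{\Pi_c}=\Omega(:,\pi_r(k:n))$ is a column submatrix of $\Omega$, hence still has i.i.d.\ $\mathcal{N}(0,1)$ entries, so $\Omega_k^{\Pi_c}S_k^{\Pi_c}(:,i)$ is precisely the random projection of the fixed vector $S_k^{\Pi_c}(:,i)$ to which Theorem \ref{randproj} applies.

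The main obstacle is the justification of that last reduction \emph{under the conditioning on ${\cal D}^{\left(\Pi_c\right)}$}: this event is itself a function of $\Omega$ through the sketches $\Psi_1,\dots,\Psi_{k-1}$ used to make the earlier randomized pivot decisions, and those decisions $\alpha_1,\dots,\alpha_{k-1}$ determine both $S_k^{\Pi_c}$ and the submatrix $\Omega_k^{\Pi_c}$ from overlapping portions of $\Omega$ (the SSS reuse of the single sketching matrix). So a priori the conditional law of $\Omega_k^{\Pi_c}S_k^{\Pi_c}(:,i)$ given ${\cal D}^{\left(\Pi_c\right)}$ need not be the unconditional Gaussian. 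The technical heart of the proof is to show that conditioning on the pivot prefix leaves the relevant stage-$k$ projection effectively fresh Gaussian — e.g.\ via a filtration/martingale argument over the stages $k$, or via the auxiliary results deferred to the appendix — after which everything else is bookkeeping with Theorem \ref{randproj}, Lemma \ref{unionb}, and Theorem \ref{totalprob}.
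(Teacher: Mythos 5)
Your argument is, step for step, the paper's own proof: the same chain of inequalities (lower $\epsilon$-JL bound on the longest column via $\underline{{\bf C}}_{j^\ast,k}^{\Pi_c}$, the thresholding inequality from ${\bf D}_{\alpha_k,k}^{\Pi_c}$, and the upper $\epsilon$-JL bound $\overline{{\bf C}}_{\alpha_k,k}^{\Pi_c}$ on the selected column) establishes that $\bigcup_{\Pi_c}\big({\cal C}^{(\Pi_c)}\cap{\cal D}^{(\Pi_c)}\big)$ is contained in the target event, and the same combination of the Law of Total Probability (Theorem \ref{totalprob}), De Morgan, and the union bound over the $\tfrac{n(n+1)}{2}$ one-sided JL events gives the probability estimate. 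The only substantive difference is your closing paragraph. The ``technical heart'' you defer --- justifying the bound $\mathbb{P}\big((\underline{{\bf C}}_{j,k}^{\Pi_c})^c \mid {\cal D}^{(\Pi_c)}\big)\le \exp\!\big(-\tfrac{(\epsilon^2-\epsilon^3)r}{4}\big)$ when ${\cal D}^{(\Pi_c)}$ is itself a function of $\Omega$ through the earlier sketches $\Psi_1,\dots,\Psi_{k-1}$ --- is not resolved anywhere in the paper: at exactly that point the paper's proof writes the conditional probability as the unconditional one and cites ``the application of Lemma \ref{randproj},'' and the appendix contains no filtration, decoupling, or martingale lemma to support discarding the conditioning. (The unconditional application is legitimate because, once $\Pi_c$ is fixed, $S_k^{\Pi_c}$ and $\Omega_k^{\Pi_c}$ are a deterministic vector and a fixed Gaussian submatrix; what is missing, in your write-up and in the paper's alike, is the passage from the conditional to the unconditional probability.) So you have reproduced the paper's argument in full, including its one unexamined step; you differ only in having noticed that the step needs examination.
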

\begin{remark}
This lemma tells us that the factor $\sqrt{\frac{1-\epsilon}{1+\epsilon}}$ plays a similar role to the {\bf column thresholding factor} $g$ as the two factors appear multiplied together in the same part of the above inequality. Therefore, it is fruitful to view and label the term $\sqrt{\frac{1-\epsilon}{1+\epsilon}}$ from Johnson-Lindenstrauss as the {\bf artificial column thresholding factor}.
\end{remark}
\begin{proof} By equations \eqref{Eqn:Events}, the event 
\[ {\displaystyle  \bigcup_{\Pi_c} \left( {\cal C}^{\left(\Pi_c\right)} \bigcap {\cal D}\left(\Pi_c\right) \right)} \subseteq \left\{ \left\| S_k \right\|_{1,2} \leq \frac{1}{g} \sqrt{\frac{1+\epsilon}{1-\epsilon}} \left\| S_k(:,\alpha_k) \right\|_{2}, \text{ for all $1\leq k\leq n$} \right\} \]
defines a subset of the set of outcomes that satisfies our desired result. To show this take any fixed choice of column pivots $\Pi_c$. It is trivial to get $\left\| S_k (:,\alpha_k) \right\| \leq \frac{1}{g} \sqrt{\frac{1+\epsilon}{1-\epsilon}} \left\| S_k(:,\alpha_k) \right\|_{2}$ without any probability. If our outcome is in both ${\cal C}\left(\Pi_c\right)$ and ${\cal D}\left(\Pi_c\right)$, then for any $1\leq k \leq j \leq n$ and $j \neq \alpha_k$
\begin{align*}
\left(1-\epsilon \right) \left\| S_k(:,j) \right\|_2^2 &\leq \left\| \frac{1}{\sqrt{r}} \Psi_k (:,j) \right\|_2^2 \hspace{2.2cm} (\text{by event } \underline{{\bf C}}_{j,k}^{\Pi_c} )\\
&\leq \frac{1}{g^2} \left\| \frac{1}{\sqrt{r}} \Psi_k (:,\alpha_k)\right\|_2^2 \hspace{1.45cm} (\text{by event } \mathcal{D}(\Pi_c) )\\
&\leq \frac{1}{g^2} \left( 1 + \epsilon \right) \left\| S_k(:,\alpha_k) \right\|_2^2 \hspace{1cm} (\text{by event } \overline{{\bf C}}_{\alpha_k,k}^{\Pi_c} )
\end{align*}
Choosing $j = \argmax_{k\leq j\leq n} \left\| S_k(:,j) \right\|_2$ gives us line (\ref{frac_of_max}) and our desired set containment. Next, we must bound the probability of success from below. It follows from the definition of conditional probability that 
\begin{eqnarray}
{\displaystyle   \mathbb{P} \left( \bigcup_{\Pi_c} \left( {\cal C}\left(\Pi_c\right) \cap {\cal D} \left( \Pi_c \right) \right)  \right)} &= & {\displaystyle \sum_{\Pi_c}  \mathbb{P} \left(  {\cal C}\left(\Pi_c\right) \cap {\cal D} \left( \Pi_c \right) \right) }\nonumber \\
& = & {\displaystyle \sum_{\Pi_c}  \mathbb{P} \left( {\cal C}\left(\Pi_c\right) \left| {\cal D}\left(\Pi_c\right) \right. \right) \,  \mathbb{P} \left( {\cal D}\left(\Pi_c\right) \right)} \label{Eqn:temp1}
\end{eqnarray}

Below we derive a lower bound on the right hand side of equation \eqref{Eqn:temp1}. Consider any given event ${\cal D}\left(\Pi_c\right)$ for which ${\displaystyle \mathbb{P} \left( {\cal D}\left(\Pi_c\right) \right)> 0}$. This implies that the permutation $\Pi_c$ is given. 
As before, for each $1 \leq k \leq n - 1$, let $S^{\Pi_c}_k \in \mathbb{R}^{(k:n)\times (k:n)}$ be the Schur complement after the first $k-1$ steps of column and row pivoting and Gaussian elimination, and let $\Omega^{\Pi_c}_k = \Omega(:, \Pi_r(k:n)) \in \mathbb{R}^{r\times (n-k+1)}$ be the submatrix of $\Omega$ whose columns correspond to the rows of $S^{\Pi_c}_k$. With this notation, we can write
\begin{align*}
{\displaystyle  \mathbb{P} \left( {\cal C}\left(\Pi_c\right) \left| {\cal D}\left(\Pi_c\right) \right. \right)} &= {\displaystyle \mathbb{P} \left(\bigcap_{1 \leq k \leq n} \left(\left( \bigcap_{\substack{ k\leq j\leq n \\ j \neq \alpha_k}} \underline{{\bf C}}_{j,k}^{\Pi_c} \right) \bigcap \overline{{\bf C}}_{\alpha_k,k}^{\Pi_c}\right)\right) } \\
&= 1 - {\displaystyle \mathbb{P}\left( \bigcup_{1 \leq k \leq n} \left(\left( \bigcup_{\substack{ k\leq j\leq n \\ j \neq \alpha_k}} \left(\underline{{\bf C}}_{j,k}^{\Pi_c}\right)^c \right) \bigcup \left(\overline{{\bf C}}_{\alpha_k,k}^{\Pi_c}\right)^c\right)\right) } \\
&\geq 1 - {\displaystyle \sum_{k=1}^n \left( \sum_{\substack{j=k \\ j \neq \alpha_k}}^n \mathbb{P}\left( \left(\underline{{\bf C}}_{j,k}^{\Pi_c}\right)^c \right) + \mathbb{P}\left(\left(\overline{{\bf C}}_{\alpha_k,k}^{\Pi_c}\right)^c\right)\right)  } \\
&\geq 1 - {\displaystyle \sum_{k=1}^n (n-k+1) \exp \left(-\frac{(\epsilon^2 - \epsilon^3)r}{4}\right)} \\
&= 1 - {\displaystyle \frac{n(n+1)}{2} \exp \left(-\frac{(\epsilon^2 - \epsilon^3)r}{4}\right)} \\
\end{align*}
where the third line comes from Lemma \ref{unionb} and the fourth line is from the application of Lemma \ref{randproj}. Plugging this into line (\ref{Eqn:temp1}), we have that
\begin{align*}
\mathbb{P} \left( \bigcup_{\Pi_c} \left( {\cal C}\left(\Pi_c\right) \cap {\cal D} \left( \Pi_c \right) \right)  \right) &\geq \left( 1 - {\displaystyle \frac{n(n+1)}{2} \exp \left(-\frac{(\epsilon^2 - \epsilon^3)r}{4}\right)} \right) \sum_{\Pi_c} \mathbb{P} \left( {\cal D}\left(\Pi_c\right) \right) \\ 
&= 1 - {\displaystyle \frac{n(n+1)}{2} \exp \left(-\frac{(\epsilon^2 - \epsilon^3)r}{4}\right)}
\end{align*}
where the last line is achieved by line (\ref{Eqn:GERCPSucceed}). In order to bound the last line from below by $1-\delta$, we require the ${\displaystyle r > \frac{4}{\epsilon^2-\epsilon^3} \ln \left( \frac{n(n+1)}{2\delta} \right)}$. 
\end{proof}

Below is our main theoretical result on column growth upper bound for GERCP.
\begin{theorem}[Column Growth Factor for GERCP] \label{gercpgf}
Choose $\epsilon, \delta \in (0,1)$ and $g \in (0,1]$. If ${\displaystyle r > \frac{4}{\epsilon^2-\epsilon^3} \ln \left( \frac{n(n+1)}{2\delta} \right) }$, then the pivot growth factor of Algorithm $1.2$ satisfies
\begin{align*}
{\displaystyle\rho_{col}^{gercp}({\bf A})} &\leq {\displaystyle \frac{1}{g^2} \frac{1+\epsilon}{1-\epsilon} \sqrt{e(n+1)} n^{1+\ln\left( g \sqrt{\frac{1+\epsilon}{1-\epsilon}} \right)} n^{\frac{1}{2} \ln \left( n \right)}}
\end{align*}
with probability greater than $1-\delta$, otherwise
\[{\displaystyle \rho_{col}^{gercp}({\bf A}) \leq \frac{1}{\sqrt{n}} 2^{n-1} }\]
\end{theorem}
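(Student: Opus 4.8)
\noindent The theorem has two regimes, and the plan is to treat them separately. The deterministic fall‑back is the easy half: because Step~2 of Algorithm~\ref{Alg:gercp} is ordinary partial row pivoting, every GERCP multiplier obeys $|l_{ik}|\le 1$, so Wilkinson's doubling estimate controls element growth in every Schur complement exactly as for GEPP, whatever column interchange Step~1 performs. Hence $\rho_{col}^{gercp}(\mathbf{A})\le\frac{1}{\sqrt n}\,2^{n-1}$ with no probabilistic hypothesis, by the computation of Section~\ref{sec:gepp}, and in particular off the good event of Lemma~\ref{Lem:WellPreservedColNrm}. It then remains to prove the sharper bound on that good event --- on which inequality~\eqref{frac_of_max} holds for all $1\le k\le n$ and which has probability at least $1-\delta$ under the stated lower bound on $r$. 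No extra probability is needed here: the proof of Lemma~\ref{Lem:WellPreservedColNrm} already invoked the Law of Total Probability (Theorem~\ref{totalprob}) to absorb the fact that each Schur complement is a discrete random matrix determined by the earlier random pivot choices, so I simply condition on the good event and on the column permutation $\Pi_c$ that GERCP actually selected, and proceed by deterministic matrix analysis. Write $\kappa:=\tfrac1g\sqrt{\tfrac{1+\epsilon}{1-\epsilon}}\ge 1$ for the combined thresholding/Johnson--Lindenstrauss constant.

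I would then set up the deterministic reduction. Work with the a‑priori‑pivoted Schur complements $S_k^{\Pi_c}$; since column $2$‑norms are invariant under row and column permutations, $\|S_k\|_{1,2}=\|S_k^{\Pi_c}\|_{1,2}$. Let $\mu_k:=\|S_k^{\Pi_c}(:,k)\|_2$ be the $2$‑norm of the pivot column of the $k$‑th Schur complement. Inequality~\eqref{frac_of_max} says that on the good event every column of $S_k^{\Pi_c}$ has $2$‑norm at most $\kappa\mu_k$; together with $\mu_1\le\|\mathbf{A}\|_{1,2}$ this already gives $\rho_{col}^{gercp}(\mathbf{A})\le\kappa\,\max_{1\le k\le n}(\mu_k/\mu_1)$, so it suffices to bound $\max_k\mu_k/\mu_1$. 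Second, the top‑heavy property (Definition~\ref{topheavy}) and Lemma~\ref{inf_to_2}, applied to the leading column of $S_k^{\Pi_c}$, give the per‑stage lower bound $|u_{kk}|=\|S_k^{\Pi_c}(:,k)\|_\infty\ge \mu_k/\sqrt{n-k+1}$: each pivot is, up to the factor $\sqrt{n-k+1}$, as large as the pivot column's $2$‑norm.

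The engine is Hadamard's inequality (Theorem~\ref{hadamard}) on leading square blocks. For any $1\le k$ and $1\le p\le n-k+1$, the leading $p\times p$ block $M$ of $S_k^{\Pi_c}$ satisfies $\det M=u_{kk}u_{k+1,k+1}\cdots u_{k+p-1,k+p-1}$ (its pivots under pivot‑free elimination), and each column of $M$ is a subvector of a column of $S_k^{\Pi_c}$, hence of $2$‑norm at most $\kappa\mu_k$; so Hadamard gives $|u_{kk}\cdots u_{k+p-1,k+p-1}|\le\kappa^{\,p}\mu_k^{\,p}$. Feeding in the per‑stage lower bound on $|u_{ii}|$ converts this into
\[ \prod_{i=k}^{k+p-1}\mu_i \;\le\; \kappa^{\,p}\left(\prod_{i=k}^{k+p-1}\sqrt{n-i+1}\,\right)\mu_k^{\,p}, \qquad 1\le k\le k+p-1\le n, \]
the GERCP analogue of the determinantal inequalities behind Wilkinson's classical sub‑exponential growth bound for GECP.

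The last step is to solve this system for $\max_k\mu_k/\mu_1$. Taking logarithms and setting $x_i=\ln(\mu_i/\mu_1)$ (so $x_1=0$, $\mu_1$ being a column norm of $\mathbf{A}$), the inequalities with $k+p-1$ pinned to a fixed index $m$ become a backward recursion for the partial sums $\sum_{i=k}^m x_i$; unrolling it, estimating $\prod\sqrt{n-i+1}$ by Stirling's formula (the origin of the $\sqrt{e(n+1)}$), bounding the harmonic‑type sums it produces by $O((\ln n)^2)$ (the origin of the $n^{\frac12\ln n}$ and $n^{1+\ln(g\sqrt{(1+\epsilon)/(1-\epsilon)})}$ factors), and finally multiplying in the leftover $\kappa$ from the reduction above, yields the stated bound --- whose prefactor $\frac1{g^2}\frac{1+\epsilon}{1-\epsilon}$ is exactly $\kappa^2$ and whose base $g\sqrt{(1+\epsilon)/(1-\epsilon)}$ in the exponent is exactly $g^2\kappa$. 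The case $g=1,\ \epsilon\to 0^+$ collapses to the $\rho_{col}^{ge2cp}$ bound, a useful consistency check. I expect this final step to be the main obstacle: the constant $\kappa$ enters once per constraint inside the recursion and once more in passing from $\max_k\mu_k$ to $\rho_{col}^{gercp}$, and the bookkeeping in the unrolling must be tight enough that these contributions assemble into precisely the prefactor $\frac1{g^2}\frac{1+\epsilon}{1-\epsilon}$ and precisely the exponent $1+\ln(g\sqrt{(1+\epsilon)/(1-\epsilon)})$, with no accumulated slack; everything genuinely probabilistic, by contrast, has already been done in Lemma~\ref{Lem:WellPreservedColNrm}.
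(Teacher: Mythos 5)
Your outline reproduces the paper's own argument almost step for step: condition on the event of Lemma~\ref{Lem:WellPreservedColNrm} (the paper likewise treats \eqref{frac_of_max} as a deterministic fact once that lemma has absorbed all the probability), use top-heaviness plus Lemma~\ref{inf_to_2} to get $|u_{kk}|\ge \mu_k/\sqrt{n-k+1}$, apply Hadamard's inequality (Theorem~\ref{hadamard}) to leading blocks of $S_k^{\Pi_c}$, and unroll the resulting log-linear system. The unrolling you defer as the ``main obstacle'' is exactly what the paper carries out explicitly, via the closed-form inverse of the triangular matrix $B$ (Lemma~\ref{matinv}), the telescoping identity (Lemma~\ref{parttele}), and the generalized Wilkinson function bound (Lemma~\ref{wilkbound}); your description of where each factor comes from is consistent with that computation.

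There is, however, one concrete piece of slack in your Hadamard step that prevents the constant from coming out as stated. You bound all $p$ columns of the leading $p\times p$ block by $\kappa\mu_k$, obtaining $|u_{kk}\cdots u_{k+p-1,k+p-1}|\le\kappa^{p}\mu_k^{p}$, but the pivot column itself (column $k$ of $S_k^{\Pi_c}$, i.e.\ column $\alpha_k$ of $S_k$) has $2$-norm exactly $\mu_k$ and needs no factor of $\kappa$; inequality \eqref{frac_of_max} should be applied only to the other $p-1$ columns, giving $\kappa^{p-1}\mu_k^{p}$ as in \eqref{det_eps_bound}. The extra power is not harmless: after dividing by $m-k$ each constraint acquires an extra $\tfrac{1}{m-k}\ln\kappa$, and the weights ${\bf e}_1^TB^{-1}$ sum these contributions to exactly $\ln\kappa$ (the same telescoping computation that gives ${\bf e}_1^TB^{-1}{\bf v}_1=q_m$), so your final prefactor would be $\kappa^3$ rather than the stated $\kappa^2=\frac{1}{g^2}\frac{1+\epsilon}{1-\epsilon}$. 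With that one correction, and granting (as the paper itself does in Section~\ref{sec:gepp}) the GEPP-style fall-back bound off the good event, your route is the paper's route.
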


\begin{remark} What happens if we are extremely unlucky? There is a chance, not exceeding the user-chosen positive probability $\delta$, that at least one column in some Schur complement is not well-preserved. In this case, GERCP could end up picking the wrong column pivot. This, however, is far from a disaster as we still perform the deterministic partial pivoting at every step. Therefore, we can view these randomized column swaps as an ``insurance policy'' against large element growth because with high probability we will attain the growth bounds in Theorem \ref{gercpgf}, but we will definitely attain GEPP growth bounds. \end{remark} 

\begin{proof}
At the $k^{th}$ stage of randomized complete pivoting, our column pivot choice is given by 
\begin{align*}
\ell_k &= \argmax_{k \leq j \leq n} \left\| \Psi_k (:,j) \right\|_2^2 \\
\alpha_k &= \left\{ \begin{array} {cl} \ell_k & \text{ if } g\left\| \Psi_k (:,\alpha_k) \right\|_2 \geq  \left\| \Psi_k(:,k) \right\|_2 \\ k & \text{ otherwise } \end{array} \right.
\end{align*}
Then the row pivot at the $k^{th}$ stage is given as $\beta_k$ where
\begin{equation} \label{alphadef}
\beta_k = \argmax_{k \leq i \leq n} \left| S_k(i,\alpha_k) \right| \end{equation}
At this point, we proceed in a fashion similar to Wilkinson's element growth proof for complete pivoting \cite{wilk1}. Let $p_k = \left| S_k(\beta_k,\alpha_k) \right|$ be the modulus of the pivot element of $S_k$. Also, let $c_k = \left\|S_k(:,\alpha_k) \right\|_2$ be the $\ell_2$-norm of the pivot column of $S_k$. It is important to note that (\ref{alphadef}) along with Lemma \ref{inf_to_2} implies that $c_k \leq \sqrt{n-k+1} p_k$. (i.e. The last line works because $p_k = U(k,k)$ from our a priori pivoting so $p_k$ is an entry of $S_k^{\Pi_c}$.) Then, we have that
\begin{equation} \label{detpslu}
\left| \det \left( S_k^{\Pi_c}(k:m,k:m) \right) \right| = \prod_{j = k}^{m} p_{j} \geq \prod_{j = k}^{m} \frac{1}{\sqrt{n-j+1}} c_j 
\end{equation} 
which holds from the LU decomposition of $S_k^{\Pi_c}$ since the $L$ factor is unit diagonal and the $U$ factor has the $p_j$'s as its diagonal. We can also apply Theorem \ref{hadamard} (Hadamard's Inequality) to the determinant of $S_k^{\Pi_c}(k:m,k:m)$ to get 
\begin{align} 
\left| \det \left( S_k^{\Pi_c}(k:m,k:m) \right) \right| &\leq \prod_{j = k}^{m} \left\| S_k^{\Pi_c}(k:m,j) \right\|_2 \leq \prod_{j = k}^{m} \left\| S_k^{\Pi_c}(:,j) \right\|_2 \nonumber \\
&\leq  \left( \frac{1}{g} \sqrt{\frac{1+\epsilon}{1-\epsilon}} \right)^{m-k} \left\| S_k(:,\alpha_k) \right\|_2^{m-k+1} =  \left( \frac{1}{g} \sqrt{\frac{1+\epsilon}{1-\epsilon}} \right)^{m-k} c_k^{m-k+1} \label{det_eps_bound}
\end{align} 
where the last expression of the first line is achieved by taking the $\ell_2$ norm of the entire $j^{th}$ column instead of the first few entries of the $j^{th}$ column. Also, the second line of the above is achieved from Theorem \ref{Lem:WellPreservedColNrm} by applying (\ref{frac_of_max}) for each $j \neq \alpha_k$. Combining our inequalities (\ref{detpslu}) and (\ref{det_eps_bound}) for $\left| \det \left( S_k(k:m,k:m) \right) \right|$, we get
\[ \left( \frac{1}{g} \sqrt{\frac{1+\epsilon}{1-\epsilon}} \right)^{m-k} \sqrt{\frac{(n-k+1)!}{(n-m)!}} c_k^{m-k+1} \geq \prod_{j = k}^{m} c_{j} \]
for all $1 \leq k \leq m \leq n$. Define $q_k = \ln ( c_k )$. Canceling one $c_k$ on both sides and taking logarithms on both sides, we get
\[ \left(m-k \right) \ln \left( \frac{1}{g} \sqrt{\frac{1+\epsilon}{1-\epsilon}} \right) + \sum_{j=k}^m \ln \sqrt{n-j+1} + \left( m-k \right) q_{k}  \geq \sum_{j=k+1}^{m} q_{j} \]
Dividing by $m-k$, moving each term with any $q_j$ for $k\leq j < m$ to one side and everything else to the other, 
\[ q_{k} - \frac{1}{m-k} \sum_{j=k+1}^{m-1} q_{j} \geq \frac{1}{m-k} q_m - \frac{1}{m-k}\sum_{j=k}^m \ln \sqrt{n-j+1} - \ln \left( \frac{1}{g} \sqrt{\frac{1+\epsilon}{1-\epsilon}} \right) \]
Next, we combine all the inequalities for each $k$ between $1\leq k < m$ to get
\begin{align*} 
\left({\arraycolsep=1.4pt\def\arraystretch{1.5} \begin{array} {cccccc} 1 & -\frac{1}{m-1} & -\frac{1}{m-1} & \cdots & -\frac{1}{m-1} & -\frac{1}{m-1} \\ 0 & 1 & -\frac{1}{m-2} & \cdots & -\frac{1}{m-2} & -\frac{1}{m-2} \\ 0 & 0 & 1 & \cdots & -\frac{1}{m-3} & -\frac{1}{m-3} \\ \vdots & \vdots & \vdots & \ddots & \vdots & \vdots \\ 0 & 0 & 0 & \cdots & 1 & -\frac{1}{2} \\ 0 & 0 & 0 & \cdots & 0 & 1 \end{array} } \hspace{-0.05cm} \right) \hspace{-0.05cm} \left( \hspace{-0.05cm}{\arraycolsep=1.4pt\def\arraystretch{1.5} \begin{array}{c} q_1 \\ q_2 \\ q_3 \\ \vdots \\ q_{m-2} \\ q_{m-1} \end{array} } \hspace{-0.05cm}\right)  \hspace{-0.05cm} &\geq \hspace{-0.05cm} \left( \hspace{-0.1cm} {\arraycolsep=1.4pt\def\arraystretch{1.5} \begin{array}{c} \frac{1}{m-1} q_m \\ \frac{1}{m-2} q_m \\ \frac{1}{m-3} q_m \\ \vdots \\ \frac{1}{2} q_m \\ q_m \end{array} } \hspace{-0.1cm} \right) \hspace{-0.05cm} - \hspace{-0.05cm} \left( \hspace{-0.1cm} \begin{array}{c} \frac{1}{m-1} \ln \sqrt{\frac{n!}{(n-m)!}} \\ \frac{1}{m-2} \ln \sqrt{\frac{(n-1)!}{(n-m)!}} \\ \frac{1}{m-3} \ln \sqrt{\frac{(n-2)!}{(n-m)!}} \\ \vdots \\ \frac{1}{2} \ln \sqrt{\frac{(n-m+3)!}{(n-m)!}} \\ \ln \sqrt{\frac{(n-m+2)!}{(n-m)!}} \end{array} \hspace{-0.1cm} \right) \hspace{-0.05cm} - \hspace{-0.05cm} \left( \hspace{-0.15cm} \begin{array}{c} \ln \left( \frac{1}{g} \sqrt{\frac{1+\epsilon}{1-\epsilon}} \right) \\ \ln \left( \frac{1}{g} \sqrt{\frac{1+\epsilon}{1-\epsilon}} \right) \\ \ln \left( \frac{1}{g} \sqrt{\frac{1+\epsilon}{1-\epsilon}} \right) \\ \vdots \\ \ln \left( \frac{1}{g} \sqrt{\frac{1+\epsilon}{1-\epsilon}} \right) \\ \ln \left( \frac{1}{g} \sqrt{\frac{1+\epsilon}{1-\epsilon}} \right) \end{array} \hspace{-0.15cm} \right) \\ \intertext{and we express the above matrix inequality as}  B \hspace{3.1cm} {\bf q} \hspace{0.5cm} &\stackrel{def}{\geq} \hspace{0.45cm} {\bf v}_1 \hspace{0.53cm} - \hspace{1.4cm} {\bf v}_2 \hspace{1.4cm} - \hspace{1.1cm} {\bf v}_3  \end{align*}
Lemma \ref{matinv} gives us a closed form expression for $B^{-1}$, which happens to be a non-negative matrix. Since $B^{-1}$ only has non-negative entries, the vector inequality above is preserved after multiplying $B^{-1}$ on both sides. To complete this proof, we only need the top row of this vector inequality. Since $q_1$ is the first entry of ${\bf q}$, we also apply ${\bf e}_1^T$ to both sides of the inequality to reduce it to a scalar inequality 
\begin{align} \label{eqn:q1inq} 
q_1 &= {\bf e}_1^T {\bf q} \geq {\bf e}_1^T B^{-1} {\bf v}_1 - {\bf e}_1^T B^{-1} {\bf v}_2 - {\bf e}_1^T B^{-1} {\bf v}_3 \end{align}
Lemma \ref{matinv} also tells us that the first row of $B^{-1}$ is $${\bf e_1}^T B^{-1} = \left[ \begin{array}{cccccc} 1 & \frac{1}{m-1} & \frac{1}{m-2} & \cdots & \frac{1}{3} & \frac{1}{2} \end{array} \right]$$ which we now use to compute/bound ${\bf e}_1^T B^{-1} {\bf v}_1$, ${\bf e}_1^T B^{-1} {\bf v}_2$ and ${\bf e}_1^T B^{-1} {\bf v}_3$:
\begin{align*} 
{\bf e}_1^T B^{-1} {\bf v}_1 &= q_m \left( \frac{1}{m-1} + \sum_{j=1}^{m-2} \frac{1}{(j+1)j} \right) = q_m
\end{align*}
where the last equality was achieved by Lemma \ref{parttele}. Next, we compute ${\bf e}_1^T B^{-1} {\bf v}_2$ which we call the Wilkinson term
\begin{align*}
{\bf e}_1^T B^{-1} {\bf v}_2 &=  \frac{1}{m-1} \sum_{j=1}^m \ln \sqrt{ n -m + j } + \sum_{k= 1}^{m-2} \frac{1}{(k+1)k} \sum_{j=1}^{k+1} \ln \sqrt{ n-m+j } \\
&= \frac{1}{m-1} \sum_{j=1}^m \ln \sqrt{ n-m+j } + \sum_{j=1}^{m-1} \sum_{k=\max\{j-1,1\}}^{m-2} \frac{1}{(k+1)k} \ln \sqrt{ n-m+j } \\
&= \frac{1}{m-1} \ln \sqrt{n} + \sum_{j=1}^{m-1} \left( \frac{1}{m-1} + \sum_{k=\max\{j-1,1\}}^{m-2} \frac{1}{(k+1)k} \right) \ln \sqrt{ n-m+j }   \\
&= \frac{1}{m-1} \ln \sqrt{n} + \sum_{j=1}^{m-1} \frac{1}{\max\{j-1,1\}} \ln \sqrt{ n-m+j } \hspace{1.5cm} \text{(Lemma \ref{parttele})} \\
&= \ln \sqrt{ n-m+1 } + \sum_{j=2}^{m} \frac{1}{j-1} \ln \sqrt{n-m+j} \\
&\stackrel{def}{=} \ln \sqrt{ n-m+1 } + \ln f(m,n-m)
\end{align*}
where we define the \emph{generalized Wilkinson function} to be 
\[f(m,t) \stackrel{def}{=} \sqrt{(2+t)^1 (3+t)^{\frac{1}{2}} (4+t)^{\frac{1}{3}} \cdots (m+t)^{\frac{1}{m-1}} } \]
Next comes ${\bf e}_1^T B^{-1} {\bf v}_3$ or the thresholding term
\begin{equation*}
{\bf e}_1^T B^{-1} {\bf v}_3 = \ln \left( \frac{1}{g} \sqrt{\frac{1+\epsilon}{1-\epsilon}} \right) \sum_{j=1}^{m-1} \frac{1}{j} \leq \left( 1 + \ln(m-1) \right) \ln \left( \frac{1}{g} \sqrt{\frac{1+\epsilon}{1-\epsilon}} \right) 
\end{equation*}
Plugging all of this into (\ref{eqn:q1inq}), we have
\begin{equation*}
q_1 \geq q_m - \ln \sqrt{ n-m+1 } - \ln f(m,n-m) - \left( 1 + \ln(m-1) \right) \ln \left( \frac{1}{g} \sqrt{\frac{1+\epsilon}{1-\epsilon}} \right)
\end{equation*}
Taking the exponential of both sides and rearranging terms, we have that for all $1 \leq m \leq n$
\begin{equation} \label{cratio}
\frac{c_m}{c_1} \leq \sqrt{ n-m+1 }f(m,n-m)\frac{1}{g}\sqrt{\frac{1+\epsilon}{1-\epsilon}}(m-1)^{\ln\left( \frac{1}{g} \sqrt{\frac{1+\epsilon}{1-\epsilon}} \right)}
\end{equation}
Next, we need to relate the ratio $\frac{c_k}{c_1}$ to $\rho_{col}^{gercp}$
\begin{align*}
\rho_{col}^{gercp} &= \frac{\max_k \left\| S_k \right\|_{1,2}}{\left\| S_1 \right\|_{1,2}} \\
&\leq \frac{1}{g} \sqrt{\frac{1+\epsilon}{1-\epsilon}} \max_k \frac{ \left\| S_k(:,\alpha_k) \right\|_{2}}{\left\| S_1 (:,\alpha_1) \right\|_{2}} \hspace{6.41cm} \text{(Lemma \ref{Lem:WellPreservedColNrm})}  \\
&= \frac{1}{g} \sqrt{\frac{1+\epsilon}{1-\epsilon}} \max_k \frac{c_k}{c_1} \\
&\leq \frac{1}{g^2} \frac{1+\epsilon}{1-\epsilon} \max_m \sqrt{ n-m+1 }f(m,n-m)(m-1)^{\ln\left( \frac{1}{g} \sqrt{\frac{1+\epsilon}{1-\epsilon}} \right)} \hspace{2cm} \text{(Eqn (\ref{cratio}))} \\
&\leq \frac{1}{g^2} \frac{1+\epsilon}{1-\epsilon} \max_m \sqrt{e(n-m+2)} (n-m+1) (m-1)^{\ln\left( \frac{1}{g} \sqrt{\frac{1+\epsilon}{1-\epsilon}} \right)} \\
& \hspace{1cm} m^{\frac{1}{4} \ln \left( n \right)} m^{\frac{1}{4} \ln \left( \frac{n}{m} \right)} (n-m+1)^{\frac{1}{4} \ln\left(n-m+1\right)} \\
&\leq \frac{1}{g^2} \frac{1+\epsilon}{1-\epsilon} \max_m \left(\sqrt{e(n-m+2)} (n-m+1) (m-1)^{\ln\left( \frac{1}{g} \sqrt{\frac{1+\epsilon}{1-\epsilon}} \right)} \right) \\
& \hspace{1cm} \max_m \left(m^{\frac{1}{4} \ln \left( n \right)} m^{\frac{1}{4} \ln \left( \frac{n}{m} \right)} \right) \max_m (n-m+1)^{\frac{1}{4} \ln\left(n-m+1\right)} \hspace{1.98cm} \text{(Lemma \ref{wilkbound})} \\
&\leq \frac{1}{g^2} \frac{1+\epsilon}{1-\epsilon} \sqrt{e(n+1)} n^{1+\ln\left( \frac{1}{g} \sqrt{\frac{1+\epsilon}{1-\epsilon}} \right)} \max_m \left(m^{\frac{1}{4} \ln \left( n \right)} m^{\frac{1}{4} \ln \left( \frac{n}{m} \right)} \right) n^{\frac{1}{4} \ln\left(n\right)}
\end{align*}
where the third to last line is from Lemma \ref{wilkbound}. Examining the following derivative
\[ \frac{d}{dm} \ln \left(m^{\frac{1}{4} \ln \left( n \right)} m^{\frac{1}{4} \ln \left( \frac{n}{m} \right)} \right) = \frac{\ln (n)}{m} - \frac{\ln \left(m\right)}{m} \]
which equals zero only when $m=n$, and given the concavity, this point attains the max. Thus, we plug this point in to get
\[ \rho_{col}^{gercp} \leq \frac{1}{g^2} \frac{1+\epsilon}{1-\epsilon} \sqrt{e(n+1)} n^{1+\ln\left( \frac{1}{g} \sqrt{\frac{1+\epsilon}{1-\epsilon}} \right)} n^{\frac{1}{2} \ln \left( n \right)} \]
to get our desired result.
\end{proof}

Next, we slightly add to the statement of the last theorem to add a probabilistic guarantee on the floating point error of the sampling matrix $\Psi$ under the sampling update formula \eqref{Eqn:PsiUpdate}. The floating error analysis of the sampling matrix must be done under the objective that rounding errors cannot corrupt the column lengths by too much, which is suggested by the following result under small probability of failure. This shows that the floating point error cannot grow quickly or exponentially under the given update scheme.
\begin{theorem} [Stability of unconditionally stable sampling update formula] \label{thm:stable}
For Gaussian Elimination with Randomized Complete Pivoting, we have the two guarantees 
\begin{align*} 
\rho_{col}^{gercp}({\bf A}) &\leq \frac{1}{g^2} \frac{1+\epsilon_{JL}}{1-\epsilon_{JL}} \sqrt{e(n+1)} n^{1+\ln\left( g \sqrt{\frac{1+\epsilon_{JL}}{1-\epsilon_{JL}}} \right)} n^{\frac{1}{2} \ln \left( n \right)}
\intertext{and}
\left\| fl\big( \widehat{\Psi}_{k+1} \big) - \widehat{\Psi}_{k+1} \right\|_{1,2} &\leq \epsilon_{mach} \left(1+ \epsilon_{mach} \right)^k \sqrt{kr(1+\epsilon_{JL})} \left(  \sqrt{k} + 5 n \sqrt{n(1+\epsilon_{JL})} \right) \rho_{col}^{gercp} \left(A\right) \left\| A \right\|_{1,2} 
\end{align*}
with probability at least $1 - \frac{n(n+1)}{2}\exp \left(-\frac{(\epsilon_{JL}^2 - \epsilon_{JL}^3)r}{4}\right) - \exp \left(-\frac{\epsilon_{JL}^2rn}{2}\right)$
\end{theorem}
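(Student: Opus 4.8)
The plan is to prove both displayed inequalities on a single high-probability event obtained by intersecting the Johnson--Lindenstrauss event ${\cal C}$ of \eqref{Eqn:Events} (instantiated with $\epsilon = \epsilon_{JL}$) with a Frobenius-norm control event ${\cal F} = \{\,\|\Omega\|_F \le \sqrt{rn}\,(1+\epsilon_{JL})\,\}$, and then to bound the failure probability by a union bound. The first inequality is exactly Theorem \ref{gercpgf} with $\epsilon = \epsilon_{JL}$: its proof is entirely deterministic once \eqref{frac_of_max} holds along the realized column permutation, and Lemma \ref{Lem:WellPreservedColNrm} shows \eqref{frac_of_max} holds precisely on ${\cal C}$, whose complement has probability at most $\frac{n(n+1)}{2}\exp(-(\epsilon_{JL}^2-\epsilon_{JL}^3)r/4)$. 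Since the entries of $\Omega$ are sampled independently from $\mathcal{N}(0,1)$ before the algorithm runs, $\|\Omega\|_F^2 \sim \chi^2_{rn}$ unconditionally, so a standard sub-exponential tail bound for chi-squared variables gives $\mathbb{P}({\cal F}^c) = \mathbb{P}\big(\chi^2_{rn} > rn(1+\epsilon_{JL})^2\big) \le \exp(-\epsilon_{JL}^2 rn/2)$. A union bound over ${\cal C}^c$ and ${\cal F}^c$ reproduces the failure probability in the statement, so it remains only to verify the second inequality on ${\cal C}\cap{\cal F}$.

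To that end I start from the entrywise rounding-error estimate proved above for the update formula \eqref{Eqn:PsiUpdate} (specialized to pivot-block size $p=1$), apply the maximum-$\ell_2$-column norm $\|\cdot\|_{1,2}$ to both sides (it is monotone under entrywise domination by a nonnegative matrix), and use the triangle inequality to split the bound into $\sum_{i=1}^k \|\Psi_i(:,(k+1):n)\|_{1,2}$ and $5\,\big\| |\Omega|\,|L_{k+1}|\,|U_{k+1}(:,(k+1):n)| \big\|_{1,2}$. For the first part, on ${\cal C}$ every column of $\Psi_i = \Omega^{\Pi_c}_i S^{\Pi_c}_i$ satisfies $\|\Psi_i(:,j)\|_2 = \sqrt{r}\,\big\|\frac{1}{\sqrt r}\Omega^{\Pi_c}_i S^{\Pi_c}_i(:,j)\big\|_2 \le \sqrt{r(1+\epsilon_{JL})}\,\|S_i(:,j)\|_2 \le \sqrt{r(1+\epsilon_{JL})}\,\rho_{col}^{gercp}({\bf A})\,\|{\bf A}\|_{1,2}$ by the definition of the column growth factor, so this part is at most $k\sqrt{r(1+\epsilon_{JL})}\,\rho_{col}^{gercp}({\bf A})\,\|{\bf A}\|_{1,2} = \sqrt{k}\cdot\sqrt{kr(1+\epsilon_{JL})}\,\rho_{col}^{gercp}({\bf A})\,\|{\bf A}\|_{1,2}$.

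For the second part I use submultiplicativity, $\big\| |\Omega|\,|L_{k+1}|\,|U_{k+1}(:,(k+1):n)| \big\|_{1,2} \le \| |\Omega| \|_2\,\| |L_{k+1}| \|_2\,\| U_{k+1}(:,(k+1):n) \|_{1,2}$, and estimate each factor: on ${\cal F}$, $\| |\Omega| \|_2 \le \| |\Omega| \|_F = \|\Omega\|_F \le \sqrt{rn}\,(1+\epsilon_{JL})$; top-heaviness forces the entries of $|L_{k+1}|$ into $[0,1]$, so the same column-sum/row-sum argument used for the $\|L\|_p \le n$ lemma gives $\| |L_{k+1}| \|_2 \le \sqrt{\| |L_{k+1}| \|_1\,\| |L_{k+1}| \|_\infty}\le n$; and, as in the proof of Theorem \ref{colback}, each entry of $U_{k+1}$ is an entry of some Schur complement $S^{\Pi_c}_m$, so for $j > k$ the column $U_{k+1}(:,j)$ has at most $k$ nonzero entries, each of modulus at most $\rho_{col}^{gercp}({\bf A})\,\|{\bf A}\|_{1,2}$, whence $\| U_{k+1}(:,(k+1):n) \|_{1,2} \le \sqrt{k}\,\rho_{col}^{gercp}({\bf A})\,\|{\bf A}\|_{1,2}$. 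Multiplying, the second part is at most $5n\sqrt{rn}\,(1+\epsilon_{JL})\sqrt{k}\,\rho_{col}^{gercp}({\bf A})\,\|{\bf A}\|_{1,2} = 5n\sqrt{n(1+\epsilon_{JL})}\cdot\sqrt{kr(1+\epsilon_{JL})}\,\rho_{col}^{gercp}({\bf A})\,\|{\bf A}\|_{1,2}$; adding the two parts and pulling out the common factor $\epsilon_{mach}(1+\epsilon_{mach})^k\sqrt{kr(1+\epsilon_{JL})}\,\rho_{col}^{gercp}({\bf A})\,\|{\bf A}\|_{1,2}$ gives the stated estimate.

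The main obstacle is organizational rather than deep: one must keep the two probabilistic ingredients cleanly separated. The $\Psi_i$-column estimates refer to the adaptively chosen permutations through $\Omega^{\Pi_c}_i$ and $S^{\Pi_c}_i$ and therefore rest on the intricate event ${\cal C}$ (handled once and for all by Lemma \ref{Lem:WellPreservedColNrm} via its law-of-total-probability argument), whereas the $\|\Omega\|_F$ bound must be recognized as a statement about the raw Gaussian matrix alone, which decouples entirely from the pivoting and so needs no further conditioning. The remaining care is combinatorial bookkeeping: tracking which Schur complement contributes each entry of $\Psi_i$, $L_{k+1}$, and $U_{k+1}$, so that every column and row length is charged correctly against $\rho_{col}^{gercp}({\bf A})\,\|{\bf A}\|_{1,2}$ and the powers of $\sqrt{k}$, $n$, and $1+\epsilon_{JL}$ land exactly as in the theorem.
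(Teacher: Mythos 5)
Your proposal follows essentially the same route as the paper's proof: the same splitting of the entrywise rounding-error bound into the $\sum_i \|\Psi_i\|_{1,2}$ term and the $5\,\| |\Omega|\,|L_{k+1}|\,|U_{k+1}| \|$ term, the same appeal to Theorem \ref{gercpgf} and the JL event for both the first guarantee and the $\Psi_i$ estimates, the same top-heaviness and column-growth bounds on $L_{k+1}$ and $U_{k+1}$, and a final union bound; your chi-squared tail for $\|\Omega\|_F^2$ is an equivalent substitute for the paper's Gaussian--Lipschitz concentration (Theorem \ref{thm:lip} with $t=\epsilon_{JL}\sqrt{rn}$) and yields the identical exponent. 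One imprecision to fix: the event ${\cal C}^{(\Pi_c)}$ in \eqref{Eqn:Events} asserts the \emph{upper} JL bound $\overline{{\bf C}}_{j,i}^{\Pi_c}$ only for the pivot column $j=\alpha_i$ and only the \emph{lower} bound $\underline{{\bf C}}_{j,i}^{\Pi_c}$ for the remaining columns, so your claim that every column of $\Psi_i$ satisfies the two-sided condition on ${\cal C}$ is not warranted by the event as defined. The conclusion survives because $\alpha_i$ maximizes the sketch column norms, so one should route the estimate through $\|\Psi_i(:,(k+1):n)\|_{1,2} \le \|\Psi_i(:,\alpha_i)\|_2 \le \sqrt{r(1+\epsilon_{JL})}\,\|S_i(:,\alpha_i)\|_2 \le \sqrt{r(1+\epsilon_{JL})}\,\rho_{col}^{gercp}({\bf A})\,\|{\bf A}\|_{1,2}$, exactly as in the paper's inequality \eqref{faew}. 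Incidentally, your sharper intermediate estimates ($\| |L_{k+1}| \|_2 \le n$ via Theorem \ref{2_from_inf_and_1} and $\|U_{k+1}(:,(k+1):n)\|_{1,2} \le \sqrt{k}\,\rho_{col}^{gercp}({\bf A})\,\|{\bf A}\|_{1,2}$ from counting the $k$ nonzeros per column) land exactly on the stated constant, whereas the paper's own combination of $\|L_{k+1}\|_F \le \sqrt{kn}$ with $\|U_{k+1}\|_{1,2} \le n\,\rho_{col}^{gercp}({\bf A})\,\|{\bf A}\|_{1,2}$ overshoots the displayed bound by a factor of order $\sqrt{n}$.
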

\begin{proof}
First, we apply the relevant norm to both sides to get 
\begin{align}
&\big\| fl\big( \widehat{\Psi}_{k+1} \big) - \widehat{\Psi}_{k+1} \big\|_{1,2} \label{first_ineq_stable} \\ &\leq \epsilon_{mach} \left(1+ \epsilon_{mach} \right)^k \left(\sum_{i=1}^k \left\| \Psi_{i} \left( :, (k+1):n \right) \right\|_{1,2} + 5 \left\| \left| \Omega \right| \left| L_{k+1} \right| \right\|_2 \left\| \left| U_{k+1}(:,(k+1):n) \right| \right\|_{1,2} \right) \nonumber \\
&\leq \epsilon_{mach} \left(1+ \epsilon_{mach} \right)^k \left(\sum_{i=1}^k \left\| \Psi_{i} \left( :, (k+1):n \right) \right\|_{1,2} + 5 \left\| \Omega \right\|_F \left\| L_{k+1} \right\|_F \left\| U_{k+1}(:,(k+1):n) \right\|_{1,2} \right) \nonumber
\end{align}
where we bound the $2$-norm of the entrywise absolute value by the frobenius norm to get the last line. Next, Theorem \ref{gercpgf} and the definition of the event $\overline{{\bf C}}_{\alpha_k,k}^{\Pi_c}$ from \eqref{eqn:Cupper} give us both our desired bound on $\rho_{col}^{gercp} \left( {\bf A} \right)$ and 
\begin{equation} \left\| \Psi_k \right\|_{1,2} = \left\| \Psi_k (:,\alpha_k) \right\|_{2} \leq \sqrt{r(1+\epsilon_{JL})} \left\| S_k \right\|_{1,2} \leq \sqrt{r(1+\epsilon_{JL})} \rho_{col}^{gercp} \left({\bf A}\right) \left\| {\bf A} \right\|_{1,2} \label{faew} \end{equation}
with probability of failure bounded above by $\frac{n(n+1)}{2}\exp \left(-\frac{(\epsilon_{JL}^2 - \epsilon_{JL}^3)r}{4}\right)$. Since GERCP is a top-heavy method from Definition \ref{topheavy}, we have $\|L_{k+1}\|_F \leq \sqrt{kn}$ because $L_{k+1}$ is a $n\times k$ matrix with each entry bounded above by $1$ in absolute value. Also, Theorem \ref{colback} gives us that $\|U_{k+1}\|_{1,2} \leq n \rho_{col}^{gercp} \left( {\bf A} \right) \left\| {\bf A } \right\|_{1,2}$. Finally, we observe that the map $f(\Omega) = \left\| \Omega \right\|_F$ is a lipschitz map with Lipschitz constant $L = 1$ and apply Theorem \ref{thm:lip} to get
\[ \mathbb{P} \left\{ \left\| \Omega \right\|_F \geq \sqrt{rn} + t \right\} \leq e^{-\frac{t^2}{2}} \]
where we applied Jensen's inequality to get $\mathbb{E} \left\| \Omega \right\|_F \leq \sqrt{\mathbb{E} \left\| \Omega \right\|_F^2 } = \sqrt{rn}$ from Proposition $10.1$ from \cite{Tropp}. Set $t = \epsilon_{JL} \sqrt{rn} $. Then plug this and the inequality \eqref{faew} into the first inequality \eqref{first_ineq_stable} to arrive at our desired conclusion with a union bound.
\end{proof}

\section{Numerical Experiments}

\subsection{Block GERCP}
Algorithm \ref{Alg:gercp} was presented in a form for ease of presentation of Theorem \ref{gercpgf}. For efficient numerical implementation, we need to develop a block version of GERCP in Algorithm \ref{Alg:gercp} to increase locality and memory/cache re-use with more BLAS-3 calls. Algorithm \ref{Alg:brgercp} below is styled after dgetf2.f and dgetrf.f for block GEPP in LAPACK with double precision floating point numbers. Instead of increasing the pivot-block size from $p=1$, we introduce a loop-blocksize parameter $b\geq 1$. It repeatedly performs $b$ steps of randomized column pivoting and partial pivoting followed by a blocksize $b$ Schur complement update. 
\begin{algorithm}\label{Alg:brgercp}{\bf Block GERCP} \\
\headerrule

\begin{tabular}{ll}
{\bf Input:} & $n \times n$ matrix $A$, sampling dimension $r>0$, block size $b$ \\
{\bf Output:} & lower triangular $L$, upper triangular $U$, row permutation $\Pi_r$, column permutation $\Pi_c$.  \\
\end{tabular}

\noindent\headerrule \\
{\bf sample} $\Omega(i,j) \sim \mathcal{N}(0,1)$ for all $1\leq i\leq r$ and $1\leq j\leq n$ \\
{\bf compute} $\Psi = \Omega A$ \\
{\noindent \bf for} $\underline{k} = 1:b:n - 1$ {\bf do}
\begin{INDENT}
{\bf set} $\overline{k}=\underline{k}+\min\{b,n-\underline{k}+1\}-1$ \\
{\bf for} $k = \underline{k}:\overline{k}$ {\bf do} \begin{INDENT}
\begin{enumerate}
\item {\bf compute} ${\displaystyle \alpha = \left\{ \begin{array}{ll} \argmax_{k \leq j \leq n} \left\| \Psi(:,j) \right\|_2 & \text{ if } n-k \geq r \\ \argmax_{k \leq j \leq n} \left\| A(k:n,j) \right\|_2 & \text{ otherwise } \end{array} \right. }$. \\
 {\bf swap} columns $k$ and $\alpha$ of $A$, $\Psi$ and $\Omega$ (*).
\item {\bf compute} ${\displaystyle \beta = \argmax_{k \leq i \leq n} \left| A(i,k) \right|}$. \\
{\bf swap} rows $k$ and $\beta$ of $A$ (*).
\item {\bf compute} $A(k\!+\!1\!:\!n,k) = A(k\!+\!1\!:\!n,k)/A(k,k)$;
\item {\bf compute} $A(k\!+\!1\!:\!n,k\!+\!1\!:\!\overline{k}) = A(k\!+\!1\!:\!n,k\!+\!1\!:\!\overline{k}) - A(k\!+\!1\!:\!n,k) * A(k,k\!+\!1\!:\!\overline{k}) $;
\item {\bf compute} $A(k,\overline{k}\!+\!1\!:\!n) = A(k,\overline{k}\!+\!1\!:\!n) - A(k,\underline{k}\!:\!k\!-\!1) * A(\underline{k}\!:\!k\!-\!1,\overline{k}\!+\!1\!:\!n)$; 
\item {\bf update} $\Psi(:,k\!:\!n)$ with Algorithm \ref{Alg:update}
\end{enumerate} 
\end{INDENT}
{\bf end for}
\item {\bf compute} $A(\overline{k}\!+\!1\!:\!n,\overline{k}\!+\!1\!:\!n) = A(\overline{k}\!+\!1\!:\!n,\overline{k}\!+\!1\!:\!n) - A(\overline{k}\!+\!1\!:\!n,\underline{k}\!:\!\overline{k}) * A(\underline{k}\!:\!\overline{k},\overline{k}\!+\!1\!:\!n) $;

\end{INDENT}
{\bf end for}\\
\headerrule
\end{algorithm}

The main work of Algorithm \ref{Alg:brgercp} is in the last step, the repeated computation of the matrix $A(\overline{k}\!+\!1\!:\!n,\overline{k}\!+\!1\!:\!n) $. The outer loop for $\underline{k}$ is similar to the main loop in dgetrf.f, while the inner loop for $k$ is similar to the main loop in dgetf2.f. The main modifications occur on lines $7, 8$ and the BLAS-3 Schur complement update after the end of the inner loop. Line $7$ updates the $U$ factor so that we can use it to update the sketching matrix on line $8$. As in dgetf2.f, the inner loop for $k$ is designed to work on a tall-skinny matrix, whereas the outer loop for $\underline{k}$ performs fast BLAS-3 updates on the rest of the matrix. For practical reasons, we stop using the sampling matrix once the dimensions of the Schur complement become less than or equal to the sampling dimension $r>0$. It is important to note that the proof of Theorems \ref{gercpgf} and \ref{thm:stable} is easily modified to apply to this versions of the algorithm with the exact same guarantees on element growth.  We present the procedure for updating the Schur sampling matrix $\Psi$. 

\begin{algorithm}\label{Alg:update}{\bf Update procedure for Schur sampling matrix $\Psi$} \\
\headerrule

\begin{tabular}{ll}
{\bf Input:} & $r \times n$ matrix $\Psi$, $n \times n$ working matrix $A$, $r \times n$ random matrix $\Omega$ \\
{\bf Output:} & $r \times m$ matrix $\Psi$  \\
\end{tabular}

\noindent\headerrule \\
{\noindent \bf if} pivot $\left| A(k,k) \right| \geq \sqrt{\epsilon_{mach}} \left\| \Psi_1 \right\|_{1,2}$ {\bf then} apply Eqn \eqref{Eqn:PsiUpdateFast} with \\
\begin{INDENT}
$\Psi(:,(k+1):n) \longleftarrow {\displaystyle \Psi(:,(k+1):n) - \frac{\Psi(:,k) A(k,(k+1):n)}{A(k,k)}}$
\end{INDENT}
{\noindent \bf else} apply Eqn \eqref{Eqn:PsiUpdate} with \\
\begin{INDENT}
$\Psi(:,(k+1):n) \longleftarrow {\displaystyle \Psi(:,(k+1):n) - \left[\Omega(:,k) + \Omega(:,(k+1):n) A((k+1):n,k)\right] A(k,(k+1):n)  }$
\end{INDENT}
\headerrule
\end{algorithm}

\begin{remark} The first updating formula in Algorithm \ref{Alg:update} is slightly more efficient than the second one. While we have not observed it in our numerical computations, it potentially could lead to inaccurate column selections for some highly ill-conditioned matrices in pathological cases. Our numerical experiments also suggest that the execution time of Algorithm \ref{Alg:update} is typically a relatively small fraction of the total execution time of of GERCP. Thus, one might want to use the second updating formula in Algorithm \ref{Alg:update} for a more robust numerical implementation. \\
\end{remark} 

\subsection{Numerical Results} We ran our experiments on two different machines. The runtime results were preformed on a single node of NERSC's Carver machine with two quad-core Intel Xeon X5550 2.67 GHz processors and 24 GB of RAM. The rest of the numbers are generated on a laptop with an Intel i7-3632QM CPU and 8GB of RAM. All of the code here was run using the MKL BLAS. We used the open source Netlib version of GEPP. Our version of GERCP was obtained by modifying the Netlib GEPP Fortran code. This allows for an easy and fair comparison between GEPP and GERCP by insuring that the version of GEPP used to compare against has similar cache optimizations. It is worth noting that the Intel MKL version of GEPP is much faster than both Netlib GEPP and GERCP because of superior cache optimizations. Figure \ref{fig:runtimes1} from our runtime experiments shows that as the matrix size increases, the percent difference in runtime decreases to a negligible amount. This agrees with our theory, which tells us the $O(n^2)$ operations required to maintain the sampling matrix and pivot columns does not grow as quickly as the $O(n^3)$ operations required to actually factor the matrix $A$. Even when the relative time difference is high, the absolute time difference is unnoticeable for a single factorization as shown in by the run times for $N=3000$ in Table \ref{tab:runtab} below.
\begin{figure}[!h]
\centering
\includegraphics[scale=0.9]{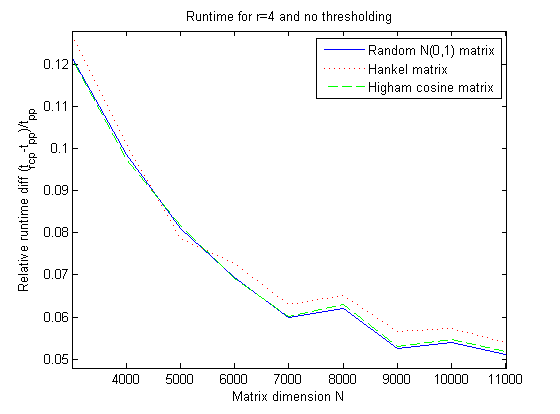}
\caption{Comparing the run times of GERCP and GEPP Fortran code each averaged over $10$ different trials}
\label{fig:runtimes1}
\end{figure}

\begin{table} \label{tab:runtab}
\begin{center}
\begin{tabular}{c|c|c|c|c|c} 
N & 3000    & 5000   & 7000 & 9000 & 11000 \\
\hline
$t_{rcp}$ (secs) & 2.250   & 9.623  & 25.506  & 53.607 & 96.662  \\ \hline
$t_{pp}$ (secs)  & 2.006   & 8.902  & 24.066  & 50.934 & 91.967 \\ \hline
$\frac{t_{rcp}-t_{pp}}{t_{pp}}$ & 12.20\% & 8.10\% & 6.00\%  & 5.20\% & 5.10\% \\
\end{tabular}
\end{center}
\caption{Average run times of GERCP and GEPP over 10 seperate trials.}
\end{table}

In section \ref{sec:gepp}, we reviewed stability issues associated with to most commonly used Gaussian elimination pivoting strategy, partial pivoting. Now, we produce numerical experiments showing the improved stability properties of GERCP. The Wilkinson, Generalized Wilkinson and Volterra matrices that we use in Figures \ref{fig:elemgrow1} and \ref{fig:errnorm1} in these plots are as described in Section \ref{sec:gepp}. The Wilkinson-type matrices serve as the worst case matrix for GEPP instability with entries that grow exponentially, where the standard Wilkinson matrix has the quickest exponential growth with a base of $2$. This is exemplified by the dashed blue and red lines in the log-log plots of element growth and backwards error of GEPP in Figures \ref{fig:elemgrow1} and \ref{fig:errnorm1}. However, GERCP fixes this by impeding element growth with column pivots to leave the backwards error near machine precision. As far as Gaussian elimination goes, the most pathological examples, that we characterize as \emph{passive aggressive} element growth, include the Volterra matrix. These matrices exhibit just enough element growth, but no more, to cause an unacceptable level of backwards error. In contrast, the element/column growth for the Wilkinson-type matrices is so massive that the problem is trivial to detect and fix for any GE algorithm with both row and column pivots to correct. This case too is effortlessly corrected by GERCP as shown by our experiments.
\begin{figure}[!h]
\centering
\includegraphics[scale=0.9]{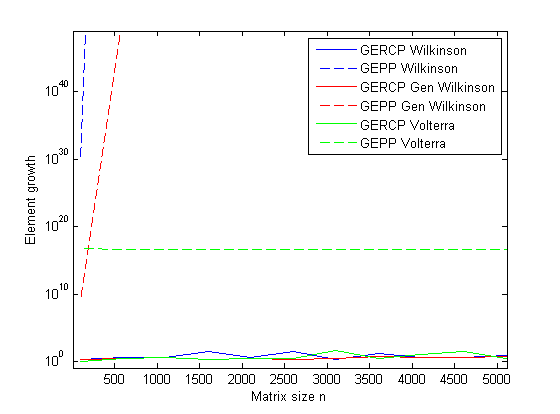}
\caption{Element growth for diabolical matrices with GERCP and GEPP Fortran code}
\label{fig:elemgrow1}
\end{figure}
\begin{figure}[!h]
\centering
\includegraphics[scale=0.9]{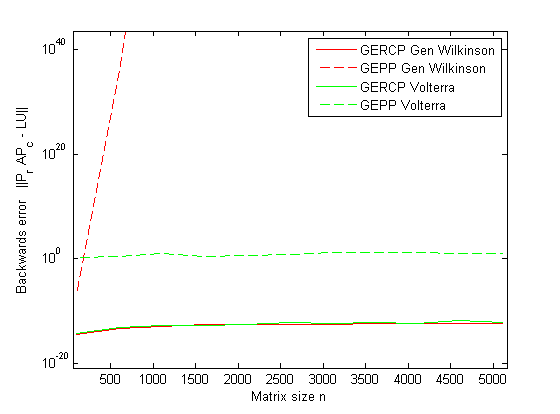}
\caption{Backwards error for diabolical matrices with GERCP and GEPP Fortran code}
\label{fig:errnorm1}
\end{figure}

\subsection{Backward error for random linear systems} Suppose we wish to solve the linear system ${\bf A}{\bf x} = {\bf b}$ where ${\bf A}\in\mathbb{R}^{n\times n}$ iid $\mathcal{N}(0,1)$ standard normal and ${\bf b} \in \mathbb{R}^n$ iid $\mathcal{N}(0,1)$ standard normal. This system is known to be well conditioned \cite{ChenDongarra} and the LU factorization is stable under partial pivoting and complete pivoting \cite{trefethenschreiber}. We measure the accuracy of a linear solve with the \emph{relative residual} \[ \frac{\left\| {\bf A} \widehat{\bf x} - {\bf b} \right\|_{\infty}}{\left\| {\bf A} \right\|_{\infty} \left\| \widehat{\bf x}\right\|_{\infty}}\]
In Figure \ref{fig:sysresid}, we plot the relative residual error for GERCP with different sampling parameters $r>0$, along with competing methods like GEPP, GECP, GERP (rook) and GE2CP. While complete pivoting consistently obtained the smallest residual, GERCP, GERP and GE2CP all produced similar relative residuals which were clearly better than GEPP and not much worse than that of GECP. This shows that GERCP produces a better quality solution than GEPP even when GEPP is given a well-conditioned system. The Figure \ref{fig:sysresid} suggests that the relative residual for the random normal linear system is improved by almost a factor of $2$. This savings becomes more important when you work with smaller precession floating point numbers like single or half precession floating point numbers. These smaller precession floating point numbers are becoming commonly used on co-processing platforms like GPGPUs as result in dramatic run time improvements. Also, for different linear systems this improvement in the relative residual can be much higher as in the case of the Wilkinson-type and Volterra matrices.
\begin{figure}[t!] \label{fig:sysresid}
	\centering
	\includegraphics[scale=0.9]{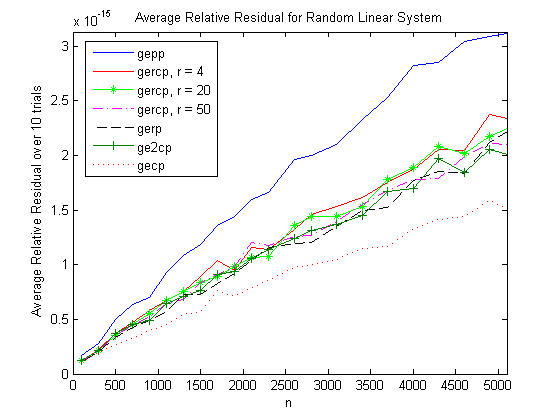}
	\caption{Average Relative Residual over $10$ trials. This suggests that GERCP should improve the relative residual of a linear solve over GEPP by at least half the improvement that GECP would provide.}
\end{figure}

We also look at the element growth within the LU factors for different pivoting strategies. In \cite{trefethenschreiber}, Trefethen and Schreiber study the element growth factors for GEPP and GECP on standard normal matrices. They conjecture that $\mathbb{E} \left( \rho_n^{gepp} \right) \approx n^{2/3}$ and $\mathbb{E} \left( \rho_n^{gecp} \right) \approx n^{1/2}$. We plot the element growth for GEPP and GECP along with the element growth for GERP (rook), GE2CP and GERCP with different values of the sampling parameter $r>0$ in Figure \ref{fig:syseg}.

\begin{figure}[t!] \label{fig:syseg}
	\centering
	\includegraphics[scale=0.9]{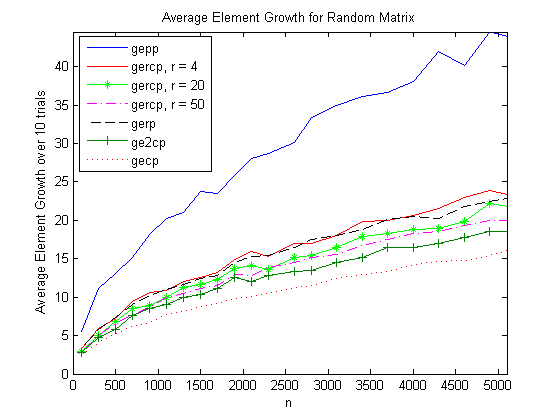}
	\caption{Average element growth of iid standard Normal random matrix over $10$ trials.}
\end{figure}

{\bf Acknowledgements and Future Work.} The authors would like to thank James Demmel and Laura Grigori for many helpful and fruitful discussions. There are many directions for future research into randomized complete pivoting. One direction is to study and provide guarantees for variants of GERCP that allow for a large pivot-block size $p>1$. The authors believe that such a method could be much faster due to an increase in BLAS-$3$ operations, and could also provide rank-revealing style guarantees for low-rank approximations generated by incomplete LU factorizations \cite{miranian2003strong}. It will also be important to develop a cache optimized version if this code to be competitive to Intel MKL LAPACK GEPP. Another important avenue for future research is to use these techniques to make communication avoiding tournament pivoting LU (CALU) more stable by adding randomized column pivots \cite{donfack2014dynamically,grigori2011calu}.

\bibliographystyle{plain}
\bibliography{total}

\section{Appendix}
\begin{lemma} [Partial fraction telescoping sum] \label{parttele}
Let $q>r>0$ be positive integers. Then we have
\[ \frac{1}{r} = \frac{1}{q} + \sum_{j=r}^{q-1} \frac{1}{(j+1)j} \]
\end{lemma}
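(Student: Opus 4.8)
The plan is to reduce everything to the classic partial-fraction identity for consecutive integers and then telescope. First I would observe that for any positive integer $j$ one has the elementary decomposition
\[ \frac{1}{(j+1)j} = \frac{1}{j} - \frac{1}{j+1}, \]
which is verified immediately by putting the right-hand side over the common denominator $j(j+1)$. This is the only nontrivial ingredient, and it is completely routine; there is no real obstacle here, the ``hard part'' being merely bookkeeping of the summation limits.

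Next I would substitute this identity into the sum in the statement, obtaining
\[ \sum_{j=r}^{q-1} \frac{1}{(j+1)j} = \sum_{j=r}^{q-1} \left( \frac{1}{j} - \frac{1}{j+1} \right). \]
Since $q > r > 0$ are integers, the index set $\{r, r+1, \dots, q-1\}$ is nonempty (or empty only in the degenerate case $q = r$, which is excluded), so the sum is a genuine finite telescoping sum: every term $-\frac{1}{j+1}$ cancels against the term $\frac{1}{j+1}$ coming from the next index $j+1$, leaving only the first positive term $\frac{1}{r}$ and the last negative term $-\frac{1}{q}$. Hence the sum equals $\frac{1}{r} - \frac{1}{q}$.

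Finally I would rearrange: adding $\frac{1}{q}$ to both sides of $\sum_{j=r}^{q-1}\frac{1}{(j+1)j} = \frac{1}{r} - \frac{1}{q}$ yields exactly $\frac{1}{q} + \sum_{j=r}^{q-1}\frac{1}{(j+1)j} = \frac{1}{r}$, which is the claimed identity. If one prefers to avoid any appeal to an informal ``telescoping'' argument, the same conclusion follows by a one-line induction on $q$ with $r$ fixed: the base case $q = r+1$ is the partial-fraction identity itself, and the inductive step adds the single term $\frac{1}{(q+1)q} = \frac{1}{q} - \frac{1}{q+1}$ and simplifies. Either route is short; I would present the telescoping version for brevity.
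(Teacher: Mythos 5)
Your proposal is correct and follows essentially the same route as the paper: both use the partial-fraction identity $\tfrac{1}{j(j+1)} = \tfrac{1}{j} - \tfrac{1}{j+1}$ and then telescope the finite sum from $r$ to $q-1$. Nothing further is needed.
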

\begin{proof}
Let $s\in \mathbb{N}$ such that $r\leq s < q$. We use common denominators to subtract the two following fractions
\[ \frac{1}{s} - \frac{1}{s+1} =  \frac{1}{s(s+1)} \]
Then we take the telescoping sum to arrive at our result
\begin{align*}
 \frac{1}{r} - \frac{1}{q} &= \sum_{s=r}^{q-1} \frac{1}{s} - \frac{1}{s+1} \\
&= \sum_{s=r}^{q-1} \frac{1}{s(s+1)}
\end{align*}
\end{proof}

\begin{lemma} [Special Matrix Inverse] \label{matinv}
Let $B = \left( b_{ij}\right)_{1\leq i,j \leq n} \in \mathbb{R}^{n\times n}$ be an upper triangular matrix given by 
\[b_{ij} = \left\{ \begin{array} {cl} 0 & \text{ if } i > j \\ 1 & \text{ if } i=j \\ -\frac{1}{i} & \text{ if } i < j \end{array} \right.\]
or in other words
\[B = \left( \begin{array} {cccccc} 1 & -\frac{1}{n} & -\frac{1}{n} & \cdots & -\frac{1}{n} & -\frac{1}{n} \\ 0 & 1 & -\frac{1}{n-1} & \cdots & -\frac{1}{n-1} & -\frac{1}{n-1} \\ 0 & 0 & 1 & \cdots & -\frac{1}{n-2} & -\frac{1}{n-2} \\ \vdots & \vdots & \vdots & \ddots & \vdots & \vdots \\ 0 & 0 & 0 & \cdots & 1 & -\frac{1}{2} \\ 0 & 0 & 0 & \cdots & 0 & 1 \end{array} \right) \]
Then the inverse $B^{-1} = \left( c_{ij}\right)_{1\leq i,j \leq n} \in \mathbb{R}^{n\times n}$ is given by
\[ c_{ij} = \left\{ \begin{array} {cl} 0 & \text{ if } i > j \\ 1 & \text{ if } i=j \\ \frac{1}{n-j+2} & \text{ if } i < j \end{array} \right. \] 
or 
\[ B^{-1} = \left( \begin{array} {cccccc} 1 & \frac{1}{n} & \frac{1}{n-1} & \cdots & \frac{1}{3} & \frac{1}{2} \\ 0 & 1 & \frac{1}{n-1} & \cdots & \frac{1}{3} & \frac{1}{2} \\ 0 & 0 & 1 & \cdots & \frac{1}{3} & \frac{1}{2} \\ \vdots & \vdots & \vdots & \ddots & \vdots & \vdots \\ 0 & 0 & 0 & \cdots & 1 & \frac{1}{2} \\ 0 & 0 & 0 & \cdots & 0 & 1 \end{array} \right) \]
\end{lemma}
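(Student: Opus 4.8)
The plan is to verify the claimed formula directly, by checking that the proposed matrix $C=(c_{ij})$ satisfies $CB=I$ (equivalently $BC=I$). Since $B$ is upper triangular with unit diagonal it is certainly invertible, so exhibiting a one-sided inverse of the correct triangular shape is enough. Moreover the product of two upper triangular matrices with unit diagonal is again upper triangular with unit diagonal, so all diagonal and strictly-lower entries of $CB$ are automatically correct; the only thing that needs checking is that $(CB)_{ij}=0$ for every pair $i<j$.

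For fixed $i<j$ I would expand $(CB)_{ij}=\sum_{k=i}^{j}c_{ik}b_{kj}$ and split it into the boundary terms $k=i$ and $k=j$ together with the middle sum $\sum_{k=i+1}^{j-1}c_{ik}b_{kj}$. Substituting the explicit entries (reading off the displayed matrices: $c_{ii}=b_{jj}=1$, the off-diagonal entry of row $k$ of $B$ is $-\tfrac{1}{n-k+1}$, and the off-diagonal entry in column $k$ of $C$ is $\tfrac{1}{n-k+2}$), the middle sum becomes $-\sum_{k=i+1}^{j-1}\tfrac{1}{(n-k+2)(n-k+1)}$, which under the index change $s=n-k+1$ is exactly $-\sum_{s=n-j+2}^{n-i}\tfrac{1}{(s+1)s}$.

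Here is where Lemma~\ref{parttele} does the work: applied with $r=n-j+2$ and $q=n-i+1$ (valid whenever $j\geq i+2$), it evaluates this telescoping sum in closed form, namely $\sum_{s=n-j+2}^{n-i}\tfrac{1}{(s+1)s}=\tfrac{1}{n-j+2}-\tfrac{1}{n-i+1}$. Adding back the boundary contributions $c_{ii}b_{ij}=-\tfrac{1}{n-i+1}$ and $c_{ij}b_{jj}=\tfrac{1}{n-j+2}$, the four pieces cancel in pairs and $(CB)_{ij}=0$ follows. The degenerate case $j=i+1$, where the middle sum is empty, is handled separately in one line: $(CB)_{i,i+1}=-\tfrac{1}{n-i+1}+\tfrac{1}{n-i+1}=0$. (One could equally bypass the lemma and simply clear denominators in the middle sum, whereupon the numerator collapses to a linear expression in $i,j,n$ that vanishes identically; Lemma~\ref{parttele} is just the tidy packaging of that manipulation, and it also makes transparent why the first row of $B^{-1}$ is $e_1^TB^{-1}=\bigl(1,\ \tfrac{1}{n},\ \tfrac{1}{n-1},\ \dots,\ \tfrac12\bigr)$, the form actually used in Theorem~\ref{gercpgf}.)

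The only real obstacle is bookkeeping rather than mathematics: one must keep the index shifts straight — the off-diagonal entry in row $i$ is $-\tfrac{1}{n-i+1}$ (matching the displayed matrix), the middle sum has $j-1-i$ terms, and the substitution $s=n-k+1$ reverses the order of summation — and one must treat the boundary case $j=i+1$ explicitly so that the appeal to Lemma~\ref{parttele} is legitimate. An alternative route would be a short induction on $n$, deleting the last row and column of $B$, but the direct multiplication above is cleaner and also immediately yields the explicit first row of $B^{-1}$ that the growth-factor argument needs.
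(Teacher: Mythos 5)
Your verification is correct, but it takes a genuinely different route from the paper. The paper factors $B$ as a product of atomic (elementary) upper triangular matrices, $B = M_2 \cdots M_{n-1} M_n$, each of which is inverted by flipping the sign of its single nontrivial row; $B^{-1}$ is then the product of these inverses in reverse order, and the paper leaves that final product as an exercise (citing Lemma~\ref{parttele} as the useful tool). You instead verify directly that $CB = I$ entry by entry, reducing the strictly-upper entries to the telescoping sum of Lemma~\ref{parttele} with $r = n-j+2$, $q = n-i+1$; your index changes check out, the four pieces do cancel in pairs, and you correctly isolate the degenerate case $j = i+1$ where the lemma's hypothesis $q > r$ would fail. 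Your argument is the more self-contained of the two — it actually completes the computation the paper defers — while the paper's factorization makes the triangular structure of $B^{-1}$ visible without any summation. One further point in your favor: you correctly work from the displayed matrices rather than the entry formulas in the lemma statement (which, as written, give $b_{ij} = -\tfrac{1}{i}$ and so do not match the displayed $B$; the displayed version, with row $i$ carrying $-\tfrac{1}{n-i+1}$, is the one actually used in the proof of Theorem~\ref{gercpgf}). No gaps.
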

\begin{proof}
Rewrite $B$ into the product of elementary matrices or atomic triangular matrices
\begin{align*} 
B &= \scalemath{0.9}{\left( \begin{array} {cccccc} 1 & 0 & 0 & \cdots & 0 & 0 \\ 0 & 1 & 0 & \cdots & 0 & 0 \\ 0 & 0 & 1 & \cdots & 0 & 0 \\ \vdots & \vdots & \vdots & \ddots & \vdots & \vdots \\ 0 & 0 & 0 & \cdots & 1 & -\frac{1}{2} \\ 0 & 0 & 0 & \cdots & 0 & 1 \end{array} \right) \cdots \left( \begin{array} {cccccc} 1 & 0 & 0 & \cdots & 0 & 0 \\ 0 & 1 & -\frac{1}{n-1} & \cdots & -\frac{1}{n-1} & -\frac{1}{n-1} \\ 0 & 0 & 1 & \cdots & 0 & 0 \\ \vdots & \vdots & \vdots & \ddots & \vdots & \vdots \\ 0 & 0 & 0 & \cdots & 1 & 0 \\ 0 & 0 & 0 & \cdots & 0 & 1 \end{array} \right) \left( \begin{array} {cccccc} 1 & -\frac{1}{n} & -\frac{1}{n} & \cdots & -\frac{1}{n} & -\frac{1}{n} \\ 0 & 1 & 0 & \cdots & 0 & 0 \\ 0 & 0 & 1 & \cdots & 0 & 0 \\ \vdots & \vdots & \vdots & \ddots & \vdots & \vdots \\ 0 & 0 & 0 & \cdots & 1 & 0 \\ 0 & 0 & 0 & \cdots & 0 & 1 \end{array} \right)} \\
&\stackrel{def}{=} \hspace{1.7cm} M_2 \hspace{1.7cm} \cdots \hspace{2.4cm} M_{n-1} \hspace{4.3cm} M_n
\end{align*}
Taking the inverse, we see that
\begin{align*}
B^{-1} &= \hspace{2.1cm} M_n^{-1} \hspace{3.9cm} M_{n-1}^{-1} \hspace{2.1cm} \cdots \hspace{2cm} M_2^{-1} \\
&= \left( \begin{array} {cccccc} 1 & \frac{1}{n} & \frac{1}{n} & \cdots & \frac{1}{n} & \frac{1}{n} \\ 0 & 1 & 0 & \cdots & 0 & 0 \\ 0 & 0 & 1 & \cdots & 0 & 0 \\ \vdots & \vdots & \vdots & \ddots & \vdots & \vdots \\ 0 & 0 & 0 & \cdots & 1 & 0 \\ 0 & 0 & 0 & \cdots & 0 & 1 \end{array} \right) \left( \begin{array} {cccccc} 1 & 0 & 0 & \cdots & 0 & 0 \\ 0 & 1 & \frac{1}{n-1} & \cdots & \frac{1}{n-1} & \frac{1}{n-1} \\ 0 & 0 & 1 & \cdots & 0 & 0 \\ \vdots & \vdots & \vdots & \ddots & \vdots & \vdots \\ 0 & 0 & 0 & \cdots & 1 & 0 \\ 0 & 0 & 0 & \cdots & 0 & 1 \end{array} \right) \cdots \left( \begin{array} {cccccc} 1 & 0 & 0 & \cdots & 0 & 0 \\ 0 & 1 & 0 & \cdots & 0 & 0 \\ 0 & 0 & 1 & \cdots & 0 & 0 \\ \vdots & \vdots & \vdots & \ddots & \vdots & \vdots \\ 0 & 0 & 0 & \cdots & 1 & \frac{1}{2} \\ 0 & 0 & 0 & \cdots & 0 & 1 \end{array} \right)
\end{align*}
We leave computing this product as an exercise to the reader. Lemma \ref{parttele} will be useful.
\end{proof}

\begin{lemma} [Generalized Wilkinson function bound] \label{wilkbound}
Let $f(m,t) := \sqrt{(2+t)(3+t)^{\frac{1}{2}}\cdots(m+t)^{\frac{1}{m-1}}}$ be the generalized Wilkinson function. Then, we have
\begin{align}
f(m,t) &\leq \sqrt{e(t+2)(t+1)} m^{\frac{1}{4} \ln \left( m+t \right)} m^{\frac{1}{4} \ln \left( \frac{m+t}{m} \right)} (t+1)^{\frac{1}{4} \ln\left(t+1\right)} \label{eqnwilk2}
\end{align}
\end{lemma}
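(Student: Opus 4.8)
The plan is to take logarithms, reduce the claim to a one–dimensional sum estimate, and then dominate that sum by an elementary integral whose value can be written in closed form.

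First I would rewrite both sides. Since $f(m,t)>1$ and $a^{\ln b}=b^{\ln a}$, twice the logarithm of the right–hand side of \eqref{eqnwilk2} equals $1+\ln(t+2)+\ln(t+1)+\ln m\,\ln(m+t)-\tfrac12(\ln m)^2+\tfrac12(\ln(t+1))^2$, whereas $2\ln f(m,t)=\Sigma:=\sum_{k=1}^{m-1}\tfrac1k\ln(k+1+t)$. Because $\ln(m+t)\ge\ln m$ (here $t\ge0$), we have $\ln m\,\ln(m+t)-\tfrac12(\ln m)^2\ge\tfrac12(\ln m)^2$, so it is enough to prove
\[
\Sigma\ \le\ 1+\ln(t+2)+\ln(t+1)+\tfrac12(\ln m)^2+\tfrac12\big(\ln(t+1)\big)^2 ,
\]
and the case $m\le2$ (where $\Sigma\le\ln(t+2)$) is immediate, so I would then assume $m\ge3$.

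Next I would peel off the $k=1$ term of $\Sigma$, which is exactly $\ln(t+2)$, and bound the rest by an integral: for $k\ge2$, on $[k-1,k]$ one has $\tfrac1x\ge\tfrac1k>0$ and $\ln(x+t+2)\ge\ln(k+1+t)\ge\ln2>0$, hence $\tfrac1k\ln(k+1+t)\le\int_{k-1}^{k}\tfrac{\ln(x+t+2)}{x}\,dx$, and summing gives $\Sigma\le\ln(t+2)+\int_{1}^{m-1}\tfrac{\ln(x+t+2)}{x}\,dx$. I would then split $\ln(x+t+2)=\ln x+\ln(1+\tfrac{t+2}{x})$; the first part integrates to $\tfrac12(\ln(m-1))^2\le\tfrac12(\ln m)^2$, and for the second part the substitution $u=(t+2)/x$ turns it into $\int_{(t+2)/(m-1)}^{\,t+2}\tfrac{\ln(1+u)}{u}\,du$. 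This last integral I would bound by $\tfrac{\pi^2}{6}+\tfrac12(\ln(t+2))^2$, using $\int_0^1\tfrac{\ln(1+u)}{u}\,du=\tfrac{\pi^2}{12}$ on $0\le u\le1$ and the involution $u\mapsto1/v$ on $u\ge1$, which rewrites the integral as $\int_{1/(t+2)}^{1}\tfrac{\ln(1+v)}{v}\,dv-\int_{1/(t+2)}^{1}\tfrac{\ln v}{v}\,dv$ and thereby produces the $\tfrac12(\ln(t+2))^2$ together with a second $\tfrac{\pi^2}{12}$. Collecting, $\Sigma\le\ln(t+2)+\tfrac12(\ln m)^2+\tfrac{\pi^2}{6}+\tfrac12(\ln(t+2))^2$; then I would absorb the gap to $\tfrac12(\ln(t+1))^2$ via $\tfrac12(\ln(t+2))^2-\tfrac12(\ln(t+1))^2=\tfrac12\ln\tfrac{t+2}{t+1}\,\ln((t+1)(t+2))\le\tfrac{\ln(t+2)}{t+1}\le\ln2$ for all $t\ge0$, obtaining $\Sigma\le\ln(t+2)+\tfrac12(\ln m)^2+\tfrac12(\ln(t+1))^2+(\tfrac{\pi^2}{6}+\ln2)$. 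The target inequality then follows as soon as $\tfrac{\pi^2}{6}+\ln2\le1+\ln(t+1)$, i.e. for $t\ge3$; the remaining values $t\in\{0,1,2\}$ (a bounded continuum $t\in[0,3)$ if $t$ is permitted to be non‑integral), and the degenerate $m\le2$, I would finish by carrying out the same term‑by‑term domination directly, where the stated constant $\sqrt{e(t+2)(t+1)}$ is far larger than that crude bound requires.

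The hard part is entirely quantitative: the estimate inevitably carries a fixed additive constant (here $\tfrac{\pi^2}{6}+\ln2\approx2.34$), and the argument only closes because the budget $1+\ln(t+2)+\ln(t+1)$ on the right eventually dominates it — so one must check the bounded range of small $t$ by hand, and, more importantly, arrange the per‑term bounds so that no term logarithmic in $m$ leaks out. That last point is the reason to dominate $\Sigma$ by $\int\tfrac{\ln(x+t+2)}{x}\,dx$ rather than, say, by $\int\tfrac{\ln2+\ln x}{x}\,dx$, which would contribute an unabsorbable $\ln2\cdot\ln m$. The only non‑elementary input is the Euler value $\int_0^1\tfrac{\ln(1+u)}{u}\,du=\tfrac{\pi^2}{12}$.
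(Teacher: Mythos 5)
Your overall strategy (take logarithms, peel off the $k=1$ term, dominate the remaining sum by $\int_1^{m-1}x^{-1}\ln(x+t+\cdot)\,dx$, and evaluate) is the same as the paper's, but your treatment of the integral is lossier in two places, and the resulting gap for small $t$ is genuine, not a routine verification. Your chain closes only when $\tfrac{\pi^2}{6}+\ln 2\le 1+\ln(t+1)$, i.e.\ $t\gtrsim 2.82$, and the deferral of $t\in\{0,1,2\}$ to ``the same term-by-term domination'' does not work: at $t=0$ even the \emph{exact} value of your comparison integral already exceeds the budget. Indeed, your per-factor bound forces the integrand $x^{-1}\ln(x+t+2)$, and as $m\to\infty$ the resulting additive constant is $\ln 2+\int_0^{2}u^{-1}\ln(1+u)\,du=\ln 2-\mathrm{Li}_2(-2)\approx 2.13$, whereas the available budget at $t=0$ is $1+\ln 2\approx 1.69$. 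Since $m$ is unbounded, the small-$t$ cases cannot be dispatched by finite checking either, and $t=0$ is exactly the case the paper needs in Theorem \ref{gercpgf} (the maximum over $m$ there is attained at $m=n$, i.e.\ $t=n-m=0$). So as written the proof establishes the lemma only for $t\ge 3$.

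The paper closes this by a sharper sum-to-integral comparison and a different evaluation. First, it observes that the whole summand $x\mapsto x^{-1}\ln(x+t+1)$ is decreasing for $x\ge 1$, which gives $\sum_{k=2}^{m-1}k^{-1}\ln(k+t+1)\le\int_1^{m-1}x^{-1}\ln(x+t+1)\,dx$ with $t+1$ rather than your $t+2$; at $t=0$ this alone saves roughly $\int_1^\infty\frac{dx}{x(x+1)}=\ln 2$, which is the whole margin. Second, instead of splitting off $\ln x$ and invoking $\int_0^1 u^{-1}\ln(1+u)\,du=\pi^2/12$, it integrates by parts and uses $\frac{1}{x+t+1}=\frac1x-\frac{t+1}{x(x+t+1)}$, reducing everything to $\int_1^\infty\frac{(t+1)\ln x}{x(x+t+1)}\,dx\le\frac12\ln^2(t+1)+\ln(t+1)+1$ (Lemma \ref{impropint}), which fits the budget $1+\ln(t+1)+\frac12\ln^2(t+1)$ exactly for all $t\ge 0$ (at $t=0$ the integral is $\pi^2/12\le 1$). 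If you want to salvage your route, you must at minimum replace the per-factor bound by the monotonicity comparison with $\ln(x+t+1)$ and then redo the small-$t$ estimates; the dilogarithm evaluation can stay, but the constants have to be tracked to within about $0.18$ at $t=0$.
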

\begin{proof}
We have 
\[ \ln \left( f^2 (m,t) \right) = \sum_{k=2}^m \frac{1}{k-1} \ln \left( t+k \right) = \sum_{k=1}^{m-1} \frac{1}{k} \ln \left( t+k+1 \right) \]
Observe the identity $\frac{1}{k}-\frac{1}{k+t+1} = \frac{t+1}{k(k+t+1)}$ and note that the function $\frac{1}{k} \ln \left( t+k+1 \right)$ is decreasing in $k$. We use the integral approximation along with integration by parts to get (\ref{eqnwilk2})
\begin{align*}
\ln \left( f^2 (m,t) \right) &= \ln \left( t+2 \right) + \sum_{k=2}^{m-1} \frac{1}{k} \ln \left( k+t+1 \right) \\
&\leq \ln \left( t+2 \right) + \int_{1}^{m-1} \frac{1}{x} \ln \left( x+t+1 \right) dx \\
&= \ln \left( t+2 \right) + \ln(x) \ln (t+x+1) \bigg|_{1}^{m-1} - \int_{1}^{m-1} \frac{1}{x+t+1} \ln \left(x\right) dx \\
&= \ln \left( t+2 \right) + \ln(x) \ln (t+x+1) \bigg|_{1}^{m-1} - \int_{1}^{m-1} \frac{1}{x} \ln \left(x\right) dx + \int_{1}^{m-1} \frac{t+1}{x(x+t+1)} \ln \left(x\right) dx \\
&= \ln \left( t+2 \right) + \ln(m-1) \ln (m+t) - \frac{1}{2} \ln^2 \left(m-1\right) + \int_{1}^{m-1} \frac{t+1}{x(x+t+1)} \ln \left(x\right) dx \\
&\leq \ln \left( t+2 \right) + \ln(m) \ln (m+t) - \frac{1}{2} \ln^2 \left(m\right) + \int_{1}^{\infty} \frac{t+1}{x(x+t+1)} \ln \left(x\right) dx \\
&= \ln \left( t+2 \right) + \frac{1}{2} \ln(m) \ln (m+t) + \frac{1}{2} \ln \left(m\right) \ln \left(\frac{m+t}{m}\right) + \int_{1}^{\infty} \frac{t+1}{x(x+t+1)} \ln \left(x\right) dx
\end{align*}
where the second to last line follows from the fact that
\[\frac{d}{dx} \ln(x) \ln (m+t) - \frac{1}{2} \ln^2 \left(x\right) = \frac{\ln (m+t)}{x} - \frac{\ln \left(x\right)}{x} \geq 0 \]
for all $x \leq m+t$. The inequality (\ref{eqnwilk2}) follows from lemma \ref{impropint}.
\end{proof}

\begin{lemma} [Useful inequality for improper integral] \label{impropint}
We have the following inequality
\begin{align}
\int_{1}^{\infty} \frac{t+1}{x(x+c)} \ln \left(x\right) dx &\leq \frac{1}{2} \ln^2 \left(c\right) + \ln \left( c \right) + 1
\end{align}
\end{lemma}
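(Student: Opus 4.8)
The plan is to reduce the improper integral to a standard dilogarithm-type integral and then bound that directly. First note that the integrand factors by partial fractions as $\frac{c}{x(x+c)} = \frac1x - \frac1{x+c}$, where the numerator $t+1$ in the statement is to be read as $c$ (this is exactly how the lemma is applied in the proof of Lemma \ref{wilkbound}, with $c = t+1$). Neither $\int_1^\infty \frac{\ln x}{x}\,dx$ nor $\int_1^\infty \frac{\ln x}{x+c}\,dx$ converges on its own, but their difference does, so I would work throughout with the truncated integral $\int_1^R\bigl(\frac1x-\frac1{x+c}\bigr)\ln x\,dx$ and pass to the limit $R\to\infty$ at the end, writing $I$ for the resulting value.

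The key manipulation is integration by parts: take $\ln\frac{x}{x+c}$ as the antiderivative of $\frac1x-\frac1{x+c}$ and differentiate $\ln x$. The boundary term $\bigl[\ln x\,\ln\frac{x}{x+c}\bigr]_1^R$ vanishes at $x=1$ since $\ln 1=0$, and tends to $0$ as $R\to\infty$ because $\ln\frac{R}{R+c}=-\ln(1+c/R)=O(1/R)$ overwhelms the $\ln R$ factor. This leaves $I=\int_1^\infty \frac1x\,\ln\!\bigl(1+\tfrac cx\bigr)\,dx$, and the substitution $u=c/x$ (so $\frac1x\,dx = -\frac{du}{u}$, with $x=1\mapsto u=c$ and $x=\infty\mapsto u=0$) turns this into the clean form
\[ I \;=\; \int_0^c \frac{\ln(1+u)}{u}\,du. \]

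It then remains to show $\int_0^c \frac{\ln(1+u)}{u}\,du \le \tfrac12\ln^2 c + \ln c + 1$ in the range that matters, namely $c\ge1$ (in the application $c=t+1\ge1$). Split the integral at $u=1$. On $[0,1]$ use $\ln(1+u)\le u$, giving $\int_0^1\frac{\ln(1+u)}{u}\,du\le\int_0^1 1\,du=1$. On $[1,c]$ use $1+u\le 2u$, so $\ln(1+u)\le \ln2+\ln u$, and since $\ln 2<1$ and $\ln c\ge0$,
\[ \int_1^c \frac{\ln(1+u)}{u}\,du \;\le\; \int_1^c \frac{\ln 2 + \ln u}{u}\,du \;=\; \ln 2\,\ln c + \tfrac12\ln^2 c \;\le\; \ln c + \tfrac12\ln^2 c. \]
Adding the two pieces yields the asserted bound. (The case $c<1$ is not needed for the paper; it is in any case trivial, since then $\int_0^c\frac{\ln(1+u)}{u}\,du\le c$, whereas the right-hand side equals $\tfrac12(\ln c+1)^2+\tfrac12\ge\tfrac12\ge c$ for $c\le 1$ by a one-line convexity check.)

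Every step here is elementary; the only point requiring care, and the one I would write out most carefully, is the integration by parts, since it is performed on a conditionally convergent integral — this is why I would keep the finite cutoff $R$ explicit and verify that the boundary term at $R$ vanishes before taking limits. Everything downstream is a routine splitting-and-estimating argument.
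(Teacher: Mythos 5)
Your proof is correct in the case that matters ($c=t+1\ge 1$), but it takes a genuinely different route from the paper's. The paper never touches partial fractions or the dilogarithm: it simply splits the original integral at $x=c$, bounds $\frac{c}{x(x+c)}\le\frac1x$ on $[1,c]$ and $\frac{c}{x(x+c)}\le\frac{c}{x^2}$ on $[c,\infty)$, and evaluates the two resulting integrals exactly, getting $\frac12\ln^2 c$ and $\ln c+1$ on the nose --- a two-line argument with no convergence subtleties, since neither piece is improper in a delicate way. Your version first converts the integral to $\int_0^c\frac{\ln(1+u)}{u}\,du$ via integration by parts and the substitution $u=c/x$, and then splits at $u=1$; under that substitution your split point corresponds exactly to the paper's split at $x=c$, so the two proofs are in a precise sense dual to one another. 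What your detour buys is the clean dilogarithm form $-\mathrm{Li}_2(-c)$ and a marginally sharper intermediate bound ($\ln 2\,\ln c$ in place of $\ln c$, which you then relax); what it costs is the need to justify the integration by parts on a conditionally convergent difference, which the paper's direct estimate avoids entirely. Both proofs tacitly read the numerator $t+1$ in the lemma statement as $c$, which is how the lemma is invoked in Lemma \ref{wilkbound}.

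One small slip in your parenthetical on $c<1$: the chain ``$\tfrac12(\ln c+1)^2+\tfrac12\ge\tfrac12\ge c$'' fails for $c\in(\tfrac12,1]$. The claimed inequality $c\le\tfrac12\ln^2 c+\ln c+1$ does still hold there (the function $\tfrac12(\ln c+1)^2+\tfrac12-c$ is convex on $(0,1]$ with value and derivative both zero at $c=1$), but the justification as written is wrong. This is immaterial to the paper, where $c=t+1\ge1$ always.
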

\begin{proof}
\begin{align*}
\int_{1}^{\infty} \frac{c}{x(x+c)} \ln \left(x\right) dx 
&= \int_{1}^{c} \frac{c}{x(x+c)} \ln \left(x\right) dx + \int_{c}^{\infty} \frac{c}{x(x+c)} \ln \left(x\right) dx \\
&\leq \int_{1}^{c} \frac{1}{x} \ln \left(x\right) dx + \int_{c}^{\infty} \frac{c}{x^2} \ln \left(x\right) dx \\
&= \frac{1}{2} \ln^2 \left(x\right) \bigg|_{1}^{c} - \frac{c}{x} \left( \ln \left( x\right) + 1 \right) \bigg|_{c}^{\infty} \\
&=  \frac{1}{2} \ln^2 \left(c\right) + \ln \left( c \right) + 1
\end{align*}
\end{proof}

Lastly, we include a technical theorem used in the proof of Theorem \ref{thm:stable}
\begin{theorem} [Concentration of measure for Lipschitz functions of a Gaussian matrix \cite{Bogdanov}] \label{thm:lip}
Suppose that $f(x)$ is a Lipschitz function on matrices:
\[ \left| f(A)-f(B) \right| \leq L_f \left\| A - B \right\|_F \qquad \text{for all } A,B \in \mathbb{R}^{m\times n} \]
Sample a matrix $G\in \mathbb{R}^{m\times n}$ with independent standard Gaussian $\mathcal{N}(0,1)$ entries. Then, for all $t\geq 0$,
\begin{align*} \mathbb{P} \left\{ f\left( G \right) \geq \mathbb{E} \left[ f\left(G\right)\right] + L_f t \right\} &\leq e^{-\frac{t^2}{2}} 
\end{align*}
\end{theorem}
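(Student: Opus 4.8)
The plan is to reduce the matrix statement to the classical Gaussian concentration inequality for Lipschitz functions on Euclidean space, and then prove the latter by the Herbst argument built on the Gaussian logarithmic Sobolev inequality. First I would \emph{vectorize}: identifying $\mathbb{R}^{m\times n}$ with $\mathbb{R}^N$ where $N = mn$, the Frobenius norm becomes the Euclidean norm and a matrix $G$ with iid $\mathcal{N}(0,1)$ entries becomes a standard Gaussian vector $g$ on $\mathbb{R}^N$. So it suffices to show: if $f:\mathbb{R}^N\to\mathbb{R}$ is $L_f$-Lipschitz in $\|\cdot\|_2$ and $g$ is standard Gaussian on $\mathbb{R}^N$, then $\mathbb{P}(f(g)\ge \mathbb{E}f(g) + L_f t)\le e^{-t^2/2}$. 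A routine mollification (convolve $f$ with a narrow Gaussian bump of width $\eta$) lets us further assume $f$ is smooth with $\|\nabla f\|_2\le L_f$ everywhere, since the mollified $f_\eta$ remains $L_f$-Lipschitz and converges uniformly on compacts to $f$, so its tail bound passes to the limit; we may also center so that $\mathbb{E}f(g)=0$.

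The crux of the argument is the Gaussian logarithmic Sobolev inequality: for every smooth $h$ with finite energy,
\[ \mathbb{E}\big[h^2\ln h^2\big] - \mathbb{E}\big[h^2\big]\ln \mathbb{E}\big[h^2\big] \le 2\,\mathbb{E}\big[\|\nabla h\|_2^2\big], \]
with expectations taken under the standard Gaussian on $\mathbb{R}^N$. I would invoke this as a classical fact (it is due to Gross, and follows by tensorization from the two-point Bernoulli log-Sobolev inequality via the central limit theorem) rather than reprove it.

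Given the log-Sobolev inequality, the Herbst argument finishes quickly. Let $H(\lambda) = \mathbb{E}\big[e^{\lambda f}\big]$ and apply the inequality with $h = e^{\lambda f/2}$, so that $\|\nabla h\|_2^2 = \tfrac{\lambda^2}{4}e^{\lambda f}\|\nabla f\|_2^2 \le \tfrac{\lambda^2 L_f^2}{4}e^{\lambda f}$. This gives $\lambda H'(\lambda) - H(\lambda)\ln H(\lambda) \le \tfrac{\lambda^2 L_f^2}{2}H(\lambda)$; dividing by $\lambda^2 H(\lambda)$, the function $K(\lambda) := \tfrac{1}{\lambda}\ln H(\lambda)$ obeys $K'(\lambda)\le L_f^2/2$, while $K(\lambda)\to H'(0)/H(0) = \mathbb{E}f = 0$ as $\lambda\to 0^+$, hence $K(\lambda)\le \lambda L_f^2/2$, i.e.\ $H(\lambda)\le e^{\lambda^2 L_f^2/2}$ for all $\lambda>0$. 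A Chernoff bound then yields $\mathbb{P}(f(g)\ge u)\le e^{-\lambda u}H(\lambda)\le e^{-\lambda u + \lambda^2 L_f^2/2}$; optimizing at $\lambda = u/L_f^2$ gives $\mathbb{P}(f(g)\ge u)\le e^{-u^2/(2L_f^2)}$, and setting $u = L_f t$ produces the claimed bound $e^{-t^2/2}$.

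The main obstacle is the log-Sobolev inequality itself; everything else is bookkeeping. In the write-up I would therefore state it as a cited classical result and spend the bulk of the argument on the reduction steps and the Herbst iteration. An alternative route that avoids log-Sobolev is to quote the Gaussian isoperimetric inequality of Borell and Sudakov--Tsirelson, from which Lipschitz concentration with the same sharp constant follows directly; either way the deep input is external and the remainder is short.
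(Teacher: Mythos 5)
Your argument is correct, but note that the paper does not prove this statement at all: Theorem~\ref{thm:lip} appears in the appendix purely as an imported classical fact, cited to \cite{Bogdanov} and used as a black box in the proof of Theorem~\ref{thm:stable}. Your reduction (vectorize so that $\|\cdot\|_F$ becomes the Euclidean norm and $G$ becomes a standard Gaussian vector, mollify to assume $\|\nabla f\|_2\le L_f$ pointwise, center) is sound, and the Herbst argument is carried out correctly: with $h=e^{\lambda f/2}$ the log-Sobolev inequality gives $\lambda H'(\lambda)-H(\lambda)\ln H(\lambda)\le \tfrac{\lambda^2L_f^2}{2}H(\lambda)$, the substitution $K(\lambda)=\lambda^{-1}\ln H(\lambda)$ with $K(0^+)=\mathbb{E}f=0$ yields $H(\lambda)\le e^{\lambda^2L_f^2/2}$, and the optimized Chernoff bound reproduces exactly the stated constant $e^{-t^2/2}$ around the mean. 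The only imprecision is in your closing alternative: the Borell--Sudakov--Tsirelson isoperimetric inequality most directly gives concentration around the \emph{median} of $f(G)$, not the mean, so obtaining the theorem as stated by that route requires an extra (lossy or more delicate) step to pass from median to mean; the log-Sobolev/Herbst route you actually develop is the one that delivers the mean version with the sharp constant, so it is the right choice for the main line of the write-up.
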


\end{document}